\newcommand\dive{\operatorname{div}}
\newcommand{\m}[1]{\mathbb{#1}}
\newcommand{\q}[1]{\mathcal{#1}}
\newtheorem{prop}{Proposition}
\newtheorem{cor}{Corollary}
\newtheorem{thm}[prop]{Theorem}
\newtheorem{rem}{Remark}
\newtheorem{lem}[prop]{Lemma}
\numberwithin{equation}{section}
\begin{document}

\title[Non-scaling invariant semilinear wave equations] {The blow-up rate for a loglog non-scaling invariant semilinear wave equation}

\author[Tristan Roy and Hatem Zaag]{{Tristan Roy}\\
 \textit{\tiny American University of Beirut, Department of Mathematics}\\
{Hatem Zaag}\\
{\it \tiny
  Universit\'e Sorbonne Paris Nord,  LAGA, CNRS (UMR 7539)},\\
 {\it \tiny F-93430, Villetaneuse, France}
}

\address{TR: American University of Beirut, Department of Mathematics.
HZ: Universit\'e Sorbonne Paris Nord, Institut Galil\'ee,
Laboratoire Analyse, G\'eom\'etrie et Applications, CNRS UMR 7539,
99 avenue J.B. Cl\'ement, 93430 Villetaneuse, France.
}
\email{tr14@aub.edu.lb, hatem.zaag@univ-paris13.fr}

\begin{abstract}

  We consider blow-up solutions of a semilinear wave equation with a $\log\log$ perturbation of the power
  nonlinearity in the subconformal case, and show that the blow-up rate is given by
  the solution of the associated ODE which has the same blow-up time. In fact, our result shows an
  upper bound and a lower bound of the blow-up rate, both proportional to the blow-up solution of the
  associated ODE. As for the upper bound, the main difficulty comes from the fact that the PDE is not
  scaling invariant.  Our argument is a delicate adaptation of the
  original approach introduced by Hamza and Zaag for the $\log$
  perturbation of the pure power nonlinearity (see
  \cite{HZjmaa20}). It mainly relies on the design of a Lyapunov
  functional and the use of a  blow-up criterion in the similarity
  variables' setting, together with some energy, nonlinear, and interpolation estimates. As for the proof of the lower bound, we consider our argument as a major contribution, since the argument for the pure power case cannot be adapted when the scale invariance breaks
  down.
\end{abstract}

\maketitle


\section{Introduction}

Many papers are devoted to the study of finite time blow-up for
semilinear wave equations of the type
\begin{equation}\label{equ}
\partial_{tt} u -  \Delta u = f(u)
\end{equation}
considered in the whole space $\m R^N$,
where $f:\m R \longrightarrow \m R$ is regular and chosen so that the
associated ODE $u''=f(u)$ exhibits blow-up. The simplest example is
certainly the case where
\begin{equation}\label{fup}
  f(u)=|u|^{p-1}u \mbox{ with }p>1,
\end{equation}
  considered by
many authors (see John \cite{Jmm79},...).

\medskip

The Cauchy problem for \eqref{equ}-\eqref{fup} can be solved in
$H^1\times L^2(\m R^N)$, locally in time, at least for Sobolev subcritical $p$ (see for
example Shatah and Struwe \cite{SSnyu98}). If $u$ is a maximal
solution whose existence time interval is finite, following Alinhac
\cite{Apndeta95} (see also Appendix \ref{Sec:InflDomain}),  we may use the finite speed of propagation to
extend the domain of definition of $u$ to the set
\begin{equation}\label{defD}
\q D= \{(x,t)\;|\;0\le t<T(x)\},
\end{equation}
for some $1$-Lipschitz function $x\longmapsto T(x)$ with the following
property: the $H^1\times L^2$ average of the solution on sections of
any light cone with vertex $(x,T(x))$ blows up as the section
approaches the vertex. The boundary $\{t=T(x)\}$ of $\q D$ is called
the ``blow-up surface''.
In other words, the solution $u$ enjoys a ``local'' blow-up time $T(x)$, near
each $x\in \m R^N$. Note that the $1$-Lipschitz property of $T$
implies that $u$ is defined
on the backward light cone $\q C_{x,1}$ for any $x\in \m R^N$, where for any $\delta\ge
0$, $\q C_{x,\delta}$ is the backward cone with vertex $(x, T(x))$ and
slope $\delta$, formally defined by
\[
\q C_{x,\delta}=\{(\xi,\tau)\;|\; 0\le \tau < T(x) -
\delta |\xi-x|\}.
\]
If ever $x\in \m R^N$ is such that $u$ is defined on $\q C_{x,\delta}$
for some $\delta<1$, then $x$ and  $\q C_{x,\delta}$ are called
``non-characteristic''. Otherwise, we call them ``characteristic''. The set of all
non-characteristic points is denoted by $\q R$ and its complementary
by $\q S$.

\medskip


\medskip

Following these definitions, it is natural to ask whether one can derive
the ``local blow-up rate''. Such a question was solved by Merle and
Zaag in \cite{MZajm03} and \cite{MZimrn05}, in the subconformal case, where
\begin{equation}\label{condp}
p<\frac{N+3}{N-1}\mbox{ if } N\ge 2.
\end{equation}
Indeed, in \cite{MZimrn05}, given $x_0\in \q R$, the
authors proved that for any $t\in [0,T(x_0))$,
    \begin{align}
  k (T(x_0)-t)^{-\frac 2{p-1}}
  \leq& \frac{\| u(t) \|_{L^{2} \left( B \left( x_{0}, T(x_{0}) - t \right) \right) } }
{ \left(  T(x_{0}) - t \right)^{\frac{N}{2}}}\nonumber\\
+&(T(x_0)-t)
\frac{ \| \nabla u(t) \|_{L^{2} \left( B(x_{0},T(x_{0})-t) \right)} +  \| \partial_{t} u(t) \|_{L^{2} \left( B \left( x_{0}, T(x_{0}) - t \right) \right)}} {\left(  T(x_{0}) - t \right)^{\frac{N}{2}}}\nonumber\\
      \leq \; & K (T(x_0)-t)^{-\frac 2{p-1}} ,\label{2bounds}
    \end{align}
for some $0<k(N,p)\le K$, where $K$ depends only on an upper bound on
$T(x_0)$, $1/T(x_0)$, the
local initial norm $\| \left( u
(0), \partial_{t} u (0) \right) \|_{H^{1} \times L^{2}  \left( B \left(
      x_{0},  \frac{T(x_{0})}{\delta_{0}(x_{0})} \right)\right)}$
and $\delta_0(x_0)<1$, the slope of some non-characteristic
cone $\q C_{x_0,\delta_0(x_0)}\subset \q D$. Note that up to some factor, both the lower
and the upper bounds show the solution of the associated ODE
$u''=|u|^{p-1}u$ which blows up at time $T(x_0)$. The proof is given
in the framework of similarity variables first introduced in Antonini
and Merle \cite{AMimrn01} and defined for any $x_0\in \m
R^N$ and $T_0\in[0,T(x_0))$ by
\begin{equation}\label{defw}
\begin{array}{l}
y = \frac{x-x_{0}}{T_{0} - t}, \; s= - \log(T_{0} - t), \; \text{and}
  \;
  u(x,t) = (T_{0}-t)^{\frac 2{p-1}} w_{x_{0},T_{0}} (y,s) \cdot
\end{array}
\end{equation}
Given that $u$ is defined (at least) in the backward light-cone $\q
C_{x_0,T_0}$, as we have just mentioned, it follows that $w_{x_0,T_0}$
(or $w$ for simplicity) satisfies the following equation for all $s\ge
-\log T_0$ and $y\in B(0,1)$:
\begin{equation}\label{eqw}
\begin{array}{ll}
\partial_{s}^{2} w  & =  \frac{1}{\rho} \dive \left( \rho \nabla w - \rho (y \cdot \nabla w) y \right)
 - \frac{2(p+1)}{(p-1)^{2}} w +|w|^{p-1}w- \frac{p+3}{p-1} \partial_{s} w
 - 2 y \cdot \nabla \partial_{s} w
\end{array}
\end{equation}
where
\begin{equation}\label{Eqn:ValAlpha}
\rho(y):= (1 - |y|^{2})^{\alpha}\mbox{ with }\alpha := \frac{2}{p-1} - \frac{N-1}{2} > 0.
\end{equation}
Again from \cite{AMimrn01}, we know that equation \eqref{eqw} has the
following Lyapunov functional
\begin{align}
E(w(s)) :=
\int_{B(0,1)} &\left( \frac{1}{2} (\partial_{s} w(s) )^{2} + \frac{1}{2} \left( |\nabla w(s)|^{2} - |y \cdot \nabla w(s)|^{2} \right)
  +  \frac{p+1}{(p-1)^{2}} w^{2}\right. \label{defE}\\
              &\left. - |w|^{p+1}\right) \rho \; dy.\nonumber
\end{align}
This functional, combined to a blow-up criterion in the similarity
variables' setting, makes the core of the argument of \cite{MZajm03}
and \cite{MZimrn05}, together with some involved energy and interpolation estimates.

\medskip

Further refinements (both in the non-characteristic and
the the characteristic cases) or generalizations (in particular to the conformal and
superconformal cases) are available in
\cite{MZma05}, \cite{MZjfa07}, \cite{MZcmp08}, \cite{MZajm12},
\cite{MZdmj12}, \cite{HZdcds13}, \cite{CZcpam13}, \cite{MZtams16}, \cite{MZcmp15} and
\cite{MZcpam18} (see also the notes \cite{MZsnp09},  \cite{MZxedp10}, \cite{MZsls17}).
Since our focus in this paper is the blow-up rate in the
non-characteristic case for general nonlinearities in \eqref{equ}, we
won't mention those developments.


\medskip

Following our result in the case \eqref{fup}, we wonder whether the
same holds for other nonlinearities, namely that the blow-up rate near
non-characteristic points is given by the solution of the associated
ODE
\begin{equation}\label{ode}
u"=f(u).
\end{equation}
Given that the Lypunov functional $E(w)$ \eqref{defE} is central in
the argument, and that its existence seems natural, from the fact that
equation \eqref{eqw} is autonomous, we thought for a while that
extending the result to other nonlinearities would be impossible,
unless the similarity variables' equation is autonomous.

\medskip

That belief was proven to be wrong thanks to the work of Hamza and
Zaag in \cite{HZnonl12} together with Hamza and Saidi \cite{HSjdde14},
devoted to the case where
\begin{equation}\label{pert1}
  f(u) = |u|^{p-1}u+g(u)
\end{equation}
with $p$ subconformal in the sense \eqref{condp} and $g$ $C^1$ satisfying
\begin{equation}\label{condg}
  |g(u)|\le M\left(1+\frac{|u|^p}{[\log(2+|u|^2)]^a}\right)\mbox{ for some }a\ge 1.
\end{equation}
Indeed, the statement \eqref{2bounds} holds in this case, and even for more
general nonlinearities involving $x$, $t$ and also $\nabla u$ or $\partial_t u$, with a
sublinear growth, despite the fact that the similarity variables'
equation, which looks as a perturbation of \eqref{eqw}, is not
autonomous, as one may easily check. In fact, the proof in the
case \eqref{pert1} follows from a delicate and involved
adaptation of the argument of \cite{MZajm03} given in the pure power case
\eqref{fup}. It relies in particular on the design of a new Lyapunov
functional, which appears to be a clever perturbation of \eqref{defE},
in order to take into account the time dependent terms one gets in
equation \eqref{eqw}, due to the additional term in \eqref{pert1}.

\medskip

Following \cite{HZnonl12} and \cite{HSjdde14} devoted to the case
\eqref{pert1}, we still had the impression that we were handling a case
where the similarity variables' equation is asymptotically autonomous,
as one may easily check, since the main term of the nonlinearity at
infinity is a pure power. In consequence, the next challenge was to
handle a nonlinear term where the main term at infinity is not a pure power.





\medskip

There comes the recent contribution of Hamza and Zaag in
\cite{HZjmaa20} and \cite{HZna21}, where the authors consider the case
\[
f(u)=|u|^{p-1}u[\log(2+u^2)]^a\mbox{ with }p\mbox{ satisfying
\eqref{condp} and }a\in \m R.
\]
In that case, the authors prove an analogous statement to
\eqref{2bounds}, where the factor $(T(x_0)-t)^{-\frac 2{p-1}}$ is
replaced by $\psi(T(x_0)-t)$, and $\psi$ is a solution of the
associated ODE \eqref{ode} blowing up at time $t=0$. Although the
proof follows the strategy of the pure power case \eqref{fup}, the
adaptation is not easy to guess, in the sense that the similarity
variables' definition has to be adapted, and that the new Lyapunov
functional is not a perturbation of \eqref{defE}.


\bigskip

In this paper, we further investigate the case where
the nonlinearity is not a pure power, and focus on the following
``$\log \log$'' perturbation of \eqref{fup}, where
\begin{equation}\label{pert2}
f(u) =  |u|^{p-1} u g(u) \mbox{ with } g(u) := \log^{a} \left( \log
  \left( 10 + u^{2} \right) \right)\mbox{ and } a \in \mathbb{R}.
\end{equation}
Note that $\log \log$ nonlinearities of this style have already been
considered in the literature, in particular by Roy in \cite{Rapde09}.
%
%

\medskip

The local-in-time Cauchy problem for equation
\eqref{equ}-\eqref{pert2} 
is essentially well-known. 
For the reader's convenience
we write a proof in Appendix \ref{Sec:LwpH1L2}. The existence of blow-up
solutions follows from some cut-off argument applied to the solution
of the ODE \eqref{ode}. One may also use the concavity argument of
Levine \cite{Ltams74} to derive such solutions (see also Levine and
Todorova \cite{LTpams01}). Given a
blow-up solution, one may extend its domain of definition to a set $\q D$
\eqref{defD},  for some $1$-Lipschitz function $x\longmapsto T(x)$, as
for the pure-power case \eqref{fup}. The notion of characteristic and
non-characteristic points are introduced similarly.

\medskip

On the contrary, we take a different definition of similarity
variables, for any $x_{0} \in \mathbb{R}^N$ and $ T_0\in (0,T(x_{0})] $:
%
%
%
%
\begin{equation}
\begin{array}{l}
y = \frac{x-x_{0}}{T_{0} - t}, \; s= - \log(T_{0} - t), \; \text{and} \; u(x,t) = \psi_{T_{0}}(t) w_{x_{0},T_{0}} (y,s) \cdot
\end{array}
\label{Eqn:SimilVar}
\end{equation}
Here $ \psi_{T_{0}}(t) := (T_{0} - t)^{-\frac{2}{p-1}}
\log^{-\frac{a}{p-1}} \left( - \log(T_{0}-t) \right) $, is the main
term of the solution of the ODE \eqref{ode} which blows up at time
$T_0$ (see Lemma \ref{Lem:OdeAsymp}). Using equation
\eqref{equ}-\eqref{pert2}, we see
%
that $w_{x_{0},T_{0}}$ satisfies the following equation:\footnote{We write $w$ for $w_{x_{0},T_{0}}$ for sake of simplicity.}

\begin{equation}
\begin{array}{ll}
\partial_{s}^{2} w  & =  \frac{1}{\rho} \dive \left( \rho \nabla w - \rho (y \cdot \nabla w) y \right) + \frac{2a}{(p-1) s \log(s)} y \cdot \nabla w  \\
& - \frac{2(p+1)}{(p-1)^{2}} w + \gamma(s) w - \left( \frac{p+3}{p-1} - \frac{2a}{(p-1)s \log(s)} \right) \partial_{s} w
 - 2 y \cdot \nabla \partial_{s} w \\
& + e^{-\frac{2sp}{p-1}} \log^{\frac{a}{p-1}}(s) f(\phi w),
\end{array}
\label{Eqn:Wavew}
\end{equation}
where $\rho(y)$ is defined in \eqref{Eqn:ValAlpha},
%
%
\begin{align}
\gamma(s) &:= \frac{a(p+3)}{(p-1)^{2}s \log(s)} - \frac{a(a+p-1)}{(p-1)^{2} s^{2} \log^{2}(s)} - \frac{a}{(p-1) \log(s) s^{2}}   , \; \text{and} \label{Eqn:ValGamma}    \\
\nonumber \\
\phi(s) &:= e^{\frac{2s}{p-1}} (\log(s))^{-\frac{a}{p-1}} \cdot \label{Eqn:Defphi}
\end{align}
We also introduce the functional
$\mathcal{E}(w) : s \rightarrow \mathcal{E} (w(s)) $, which will serve
as a basis to derive a Lyapunov functional for equation
\eqref{Eqn:Wavew} (see below \eqref{Eqn:DefHN} together with Lemma
\ref{lemlyap}):
\begin{align}
\mathcal{E}(w(s)):=
\int_{B(O,1)} &\left( \frac{1}{2} (\partial_{s} w(s) )^{2} + \frac{1}{2} \left( |\nabla w(s)|^{2} - |y \cdot \nabla w(s)|^{2} \right)
  +  \frac{p+1}{(p-1)^{2}} w^{2}\right. \label{Eqn:Energyw}\\
 &\left. - e^{-\frac{2(p+1)s}{p-1}} (\log(s))^{\frac{2a}{p-1}} F(\phi w(s))  \right) \rho \; dy,\nonumber
\end{align}
where
\begin{equation}
\begin{array}{l}
F(x) := \int_{0}^{x} f(x') \; dx' \cdot
\end{array}
\nonumber
\end{equation}
%
%
%
%
%
%
Thanks to a delicate adaptation of the previous literature, we are
able to confirm that the blow-up rate in the non-characteristic case
is given by the solution of the associated ODE \eqref{ode}. More
precisely, this is our main statement:
%
%
\begin{thm}\label{Thm:BlowUpRate}{\rm (The blow-up rate for a $\log\log$ perturbation of the
  power nonlineairty).}
  Let $u$ be a solution of equation \eqref{equ}-\eqref{pert2} defined
  on some set $\q D$ \eqref{defD} for some $1$-Lipschitz function
  $x\longmapsto T(x)$.
  Let $x_{0}$ be a non-characteristic
point. Then there exists  $t_{0}(x_{0}) \in [ 0, T(x_{0}) ) $ such that
for all $ t \in \left[ t_0(x_{0}), T(x_{0}) \right)$,
\begin{equation}
\begin{array}{rl}
k  \leq &\frac{1}{ \psi_{T(x_{0})}(t)}  \frac{\| u(t) \|_{L^{2} \left( B \left( x_{0}, T(x_{0}) - t \right) \right) } }
{ \left(  T(x_{0}) - t \right)^{\frac{N}{2}}}\\
&+ \frac{1} {\psi_{T(x_{0})}(t)}
\frac{ \| \nabla u(t) \|_{L^{2} \left( B(x_{0},T(x_{0})-t) \right)} +  \| \partial_{t} u(t) \|_{L^{2} \left( B \left( x_{0}, T(x_{0}) - t \right) \right)}} {\left(  T(x_{0}) - t \right)^{\frac{N}{2} -1}}
\leq  K,
\end{array}
\label{Eqn:BlowUpRate}
\end{equation}
for some $0< k(N,p,a)\le K$, where $K$ depends only on an upper bound
on $T(x_0)$, $1/T(x_0)$, the norm $\| \left( u
(t_0(x_0)), \partial_{t} u (t_0(x_0)) \right) \|_{H^{1} \times L^{2}  \left( B \left(
      x_{0},  \frac{T(x_{0})-t_0(x_0))}{\delta_{0}(x_{0})} \right)\right)}$
and $\delta_0(x_0)<1$, the slope of some non-characteristic
cone $\q C_{x_0,\delta_0(x_0)}\subset \q D$.


\end{thm}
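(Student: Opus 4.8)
The plan is to follow the Hamza--Zaag strategy from \cite{HZjmaa20}, suitably adapted to the non-autonomous, non-scaling-invariant equation \eqref{Eqn:Wavew}. The whole argument is carried out in the similarity variables \eqref{Eqn:SimilVar}; fixing the non-characteristic point $x_0$, write $T=T(x_0)$, $T_0=T$, and $w=w_{x_0,T}$. Note first that the two bounds in \eqref{Eqn:BlowUpRate} translate, via \eqref{Eqn:SimilVar} and the asymptotics of $\psi_{T_0}$ from Lemma \ref{Lem:OdeAsymp}, into the statement that the local $H^1\times L^2$ norm of $w$ on $B(0,1)$ (in the $\rho$-weighted sense) stays bounded above and below by positive constants, uniformly for $s$ large. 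So the theorem is equivalent to a two-sided uniform bound on $\|w(s)\|_{H^1_\rho\times L^2_\rho}$.

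\textbf{Upper bound.} First I would construct a Lyapunov functional $H_N(w(s))$ for \eqref{Eqn:Wavew}, built as a perturbation of $\mathcal{E}(w(s))$ in \eqref{Eqn:Energyw} by lower-order correction terms (weighted by negative powers of $s$ and $\log s$) designed to absorb the non-autonomous coefficients $\gamma(s)$, the $\frac{2a}{(p-1)s\log s}$ terms, and the exponentially small forcing $e^{-\frac{2sp}{p-1}}\log^{\frac a{p-1}}(s)f(\phi w)$; this is the content of the promised Lemma \ref{lemlyap}, and one shows $\frac{d}{ds}H_N(w(s))\le C e^{-c s}\big(1+\|w(s)\|_{H^1_\rho}^{?}\big)$ or a similar almost-monotonicity inequality, so that $H_N$ is bounded and bounded below along the flow. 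Next, combine this with a blow-up (or rather \emph{covering}) criterion in similarity variables analogous to the one in \cite{MZajm03}: if $\|w(s)\|_{H^1_\rho\times L^2_\rho}$ were to become very large at some time, the solution would blow up strictly before reaching the cone's vertex, contradicting non-characteristicity. Here one uses that $x_0\in\q R$ means $u$ extends to a cone $\q C_{x_0,\delta_0}$ of slope $\delta_0<1$, which furnishes the needed ``extra room'' for the covering argument. Together with energy and interpolation estimates controlling $\int_{s}^{s+1}\int_{B(0,1)}|\partial_s w|^2\rho\,dy\,ds$ in terms of $H_N$, this yields the uniform upper bound $\|w(s)\|_{H^1_\rho\times L^2_\rho}\le K$.

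\textbf{Lower bound.} This is where, as the abstract stresses, the scaling argument of the pure-power case collapses and a genuinely new idea is needed. I would argue by contradiction: suppose there is a sequence $s_n\to\infty$ along which $\|w(s_n)\|_{H^1_\rho\times L^2_\rho}\to 0$. The idea is to show this forces $H_N(w(s_n))\to 0$, and then, using the (already established) monotonicity-type control on $H_N$ and the dissipation term, to propagate the smallness forward in $s$ — i.e.\ once $w$ is small in $H^1_\rho$ it stays small, so $H_N(w(s))\to 0$ as $s\to\infty$. One then feeds this into a classical dichotomy for the limiting (autonomous) equation \eqref{eqw}: the only way $\mathcal E(w(s))\to 0$ is $w(s)\to 0$ in $H^1_\rho$, which via the similarity-variables relation contradicts the fact that $u$ genuinely blows up at $(x_0,T)$ — the local norm of $(u,\partial_t u)$ on cone sections must diverge at the prescribed rate, not slower. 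The non-scaling-invariance enters because the scaling used in \cite{MZimrn05} to renormalize and extract a limiting solution is unavailable; instead one must work directly with the $s$-dependent equation and exploit that the perturbative terms are integrable in $s$ (they decay like negative powers of $s$ or exponentially), so the asymptotic behavior is still governed by the autonomous limit even though no exact self-similar structure exists.

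\textbf{Main obstacle.} The principal difficulty — and the part the authors themselves flag as their main contribution — is the lower bound, specifically propagating the smallness of $\|w(s)\|_{H^1_\rho}$ forward in $s$ and ruling out the scenario where $w$ oscillates, becoming small along a subsequence without converging to $0$; handling this without a scaling symmetry, purely through the Lyapunov functional $H_N$, its dissipation, and the integrability in $s$ of all the non-autonomous corrections, is the technical heart of the proof. A secondary but nontrivial obstacle is the careful design of $H_N$ itself so that every non-autonomous term in \eqref{Eqn:Wavew}, including the coefficients in \eqref{Eqn:ValGamma}, is absorbed with the right decay rate.
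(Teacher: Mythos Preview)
Your upper-bound outline is broadly in the spirit of the paper's argument (Lyapunov functional, nonnegativity via a blow-up criterion, covering lemma, interpolation), though the paper's execution is more layered: first a functional $\mathcal N_m$ yielding polynomial-in-$s$ bounds on time averages (Proposition~\ref{Prop:BoundPolynws}), then a pointwise polynomial bound, then a second, sharper Lyapunov functional $\tilde{\mathcal L}_m$ built from $\mathcal L_0$ to close the uniform estimate (Proposition~\ref{Prop:BoundFinalWs}). This two-stage bootstrapping is not visible in your sketch, but the overall strategy is correct.

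Your lower-bound argument, however, is \emph{not} what the paper does, and as written it has a genuine gap. You propose to assume $\|w(s_n)\|_{H^1_\rho\times L^2_\rho}\to 0$ along a subsequence, propagate smallness forward via the Lyapunov functional, and then invoke a ``dichotomy'' to contradict blow-up. The difficulty is your final step: you write that this ``contradicts the fact that $u$ genuinely blows up at $(x_0,T)$ --- the local norm of $(u,\partial_t u)$ on cone sections must diverge at the prescribed rate, not slower.'' But that is precisely the statement you are trying to prove; the definition of $T(x_0)$ only guarantees that the local $H^1\times L^2$ norm diverges, not at any particular rate. Knowing $w(s)\to 0$ tells you $u$ is $o(\psi_T(t))$, which does not by itself rule out blow-up at $T(x_0)$. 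Your ``propagate smallness forward'' step is also unjustified and, as you yourself flag, would be the crux of the matter --- yet the paper simply does not go this route.

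The paper's lower-bound proof (Section~\ref{Sec:LowerBoundProof}) is instead a direct local-well-posedness argument with a \emph{modified} scaling. One assumes the inequality fails at a \emph{single} time $t_1$, enlarges the ball slightly using the non-characteristic cone and dominated convergence, and rescales by $\lambda=T(x_0)-t_1$. The rescaled data are small of size $\lesssim k\,A(\lambda)$ with $A(\lambda)=\log^{-a/(p-1)}(-\log\lambda)$; the rescaled nonlinearity becomes $h_\lambda(f)=|f|^{p-1}f\,\log^a\bigl(\log(10+\lambda^{-4/(p-1)}f^2)\bigr)$, which is \emph{not} the original one (this is where scale invariance fails). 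The heart of the argument is a careful fixed-point estimate for $h_\lambda$ in a ball of radius $\sim kA(\lambda)$, splitting into the regimes $\lambda\gtrsim 1$ versus $\lambda\ll 1$ and, in the latter, $a>0$ versus $a<0$ with a further decomposition of the domain according to $\{w^2\gtrless \lambda^{4/(p-1)-}\}$. The outcome is a solution on a cone extending strictly past $t=T(x_0)$, contradicting maximality. So the ``major contribution'' the authors advertise is not a Lyapunov/dissipation argument in similarity variables at all, but rather showing that a fixed-point contraction still closes for the $\lambda$-dependent nonlinearity $h_\lambda$ despite the broken scaling.
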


In order to find the upper bound of (\ref{Eqn:BlowUpRate}), we first find a bound on average on $s$-unit intervals $[s,s+1]$ of
$ \left\|  \left( w(\tau), \partial_{s} w(\tau \right)) \right\|^{2}_{H^{1}(B(O,1)) \times L^{2} (B(O,1))}$. More precisely, we prove in Section \ref{Sec:DesLyap} the proposition below:
\begin{prop}[Bounds on time averages of $(w,\partial_sw)$ in
  $H^1\times L^2(B(O,1))$]\label{Prop:BoundPolynws}
  Let $u$ be a solution of \eqref{equ}-\eqref{pert2}  defined
  on some set $\q D$ \eqref{defD} for some $1$-Lipschitz function
  $x\longmapsto T(x)$.
Then there  exists $b> 0$ such that for any $x_{0}$ non-characteristic point there exists $t_{0}(x_{0}) \in [ 0, T(x_{0}) ) $ such that
for all $ T_{0} \in \left[  \frac{ t_{0}(x_{0}) + T(x_{0}) }{2}, T(x_{0}) \right) $, $ x \in \mathbb{R}^N: | x - x_{0} | \leq
\frac{T_{0} - \frac{ t_{0}(x_{0}) + T(x_{0}) }{2}}{\delta(x_{0})} $, and  $ s \geq - \log \left( T^{*}(x) - t_{0}(x_{0}) \right) $  we have

\begin{equation}
\begin{array}{l}
\int_{s}^{s+1} \int_{B(O,1)} \left( ( \partial_{\tau} w )^{2} + \left| \nabla w \right|^{2} + w^{2} \right) \; dy d \tau
\leq K s \log^{1+b}(s) \;
\end{array}
\label{Eqn:EstKinetw}
\end{equation}
where $ w :=  w_{x,T^{*}(x)}$ is the function defined in
(\ref{Eqn:SimilVar}) with $ T^{*}(x) := T_{0} - \delta_{0}(x_{0})
|x-x_{0}| $, $K$ depends only on an upper bound
on $T(x_0)$, $1/T(x_0)$, the local norm \\
$\| u (t_0(x_0)), \partial_{t} u (t_0(x_0)) \|_{H^{1} \times L^{2}  \left( B \left(
      x_{0},  \frac{T(x_{0})-t_0(x_0)}{\delta_{0}(x_{0})} \right)\right)}$,
and $\delta_0(x_0)<1$, the slope of some non-characteristic
cone $\q C_{x_0,\delta_0(x_0)}\subset \q D$.
\end{prop}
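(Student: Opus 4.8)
The plan is to follow the strategy of \cite{MZajm03} and \cite{HZjmaa20}, adapting it to the non-autonomous equation \eqref{Eqn:Wavew}. The starting point is a Lyapunov functional $\mathcal{H}(w(s))$, built from $\mathcal{E}(w(s))$ in \eqref{Eqn:Energyw} by adding well-chosen lower-order corrections (the announced \eqref{Eqn:DefHN} together with Lemma \ref{lemlyap}) so that its derivative along \eqref{Eqn:Wavew} satisfies a differential inequality of the form
\begin{equation}
\frac{d}{ds}\left(\mathcal{H}(w(s))e^{-\theta(s)}\right)\le -c\,e^{-\theta(s)}\int_{B(O,1)}(\partial_s w)^2\,\frac{\rho\,dy}{1-|y|^2}+e^{-\theta(s)}R(s),\nonumber
\end{equation}
where $\theta(s)$ collects the time-dependent coefficients $\frac{1}{s\log s}$ appearing in \eqref{Eqn:Wavew} and \eqref{Eqn:ValGamma} (so $e^{-\theta(s)}$ is bounded above and below by positive constants on $[s_0,\infty)$), and $R(s)$ is an integrable-in-the-appropriate-sense remainder coming from the forcing term $e^{-\frac{2sp}{p-1}}\log^{\frac a{p-1}}(s)f(\phi w)$ and the $\gamma(s)w$ term. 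I would first establish this monotonicity-up-to-remainder, then show $\mathcal{H}$ is bounded from below along the flow (using that $F(\phi w)\lesssim |\phi w|^{p+1}g(\phi w)$ and absorbing it via the Sobolev/trace estimates of the previous literature on $B(O,1)$ with weight $\rho$), so that $\mathcal{H}(w(s))$ has a finite limit and $\int_{s_0}^\infty\int_{B(O,1)}(\partial_\tau w)^2\frac{\rho\,dy}{1-|y|^2}\,d\tau<\infty$.

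Next I would convert control of $\mathcal{H}$ into control of the full $H^1\times L^2$ norm. The functional $\mathcal{H}$ controls $\partial_s w$, the "non-radial" gradient $|\nabla w|^2-|y\cdot\nabla w|^2$, and the $L^2$ norm, all with weight $\rho$; the missing pieces are the radial derivative $y\cdot\nabla w$ and the degeneracy of $\rho$ near $|y|=1$. Here I would import, essentially verbatim up to the new time-dependent coefficients, the covering/energy argument of \cite{MZajm03}: combine the local energy identity for \eqref{equ}-\eqref{pert2} on truncated backward cones with the Lyapunov bound to get, for each unit interval $[s,s+1]$,
\begin{equation}
\int_s^{s+1}\int_{B(O,1)}\Big((\partial_\tau w)^2+|\nabla w|^2+w^2\Big)\,dy\,d\tau\le C\big(1+\mathcal{H}(w(s))\big)+C\int_s^{s+1}|R(\tau)|\,d\tau.\nonumber
\end{equation}
The non-characteristic hypothesis on $x_0$, via the cone $\q C_{x_0,\delta_0(x_0)}\subset\q D$, is what makes the boundary terms in the energy identity have a good sign and lets one control the solution up to $|y|=1$; this is exactly where $\delta_0(x_0)<1$ and the choice $T^*(x)=T_0-\delta_0(x_0)|x-x_0|$ enter, guaranteeing that $w_{x,T^*(x)}$ is defined on a slightly larger cone and hence that all the weighted estimates are uniform in the relevant range of $x$.

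It then remains to bound $\mathcal{H}(w(s))$ itself by $Cs\log^{1+b}(s)$. Since $\mathcal{H}$ is decreasing up to the remainder, $\mathcal{H}(w(s))\le \mathcal{H}(w(s_0))+\int_{s_0}^{s}|R(\tau)|\,d\tau$; the forcing/$\gamma$ terms produce a remainder that, after using the crude a priori bound on $\int_{s}^{s+1}\|(w,\partial_s w)\|_{H^1\times L^2}^2$ and the size of $\gamma(s)\sim \frac{a(p+3)}{(p-1)^2 s\log s}$, is at worst of order $\frac{\log^{b'} s}{s\log s}$ times that average — which is where the logarithmic losses accumulate and force the exponent $1+b$ rather than a clean linear bound. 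I would close this by a bootstrap/Gronwall-type argument on the quantity $\sigma(s):=\sup_{s_0\le \tau\le s}\mathcal{H}(w(\tau))/(\tau\log^{1+b}\tau)$, choosing $b>0$ large enough (depending on $N,p,a$) that the accumulated logarithmic factors are reabsorbed. The main obstacle is precisely this last step: controlling the feedback of the non-autonomous terms $\gamma(s)w$, $\frac{2a}{(p-1)s\log s}y\cdot\nabla w$ and $e^{-\frac{2sp}{p-1}}\log^{\frac a{p-1}}(s)f(\phi w)$ in the derivative of $\mathcal{H}$, keeping track of the $\log\log$ factors carefully enough to get a polynomial-in-$s$-with-log-correction bound rather than something that grows too fast to later yield the sharp blow-up rate $\psi_{T(x_0)}$; the design of the correction terms in $\mathcal{H}$ that cancel the worst contributions is the delicate, case-specific part of the argument.
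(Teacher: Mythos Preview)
Your outline has the right skeleton (modified Lyapunov functional, dissipation, then the covering argument of \cite{MZimrn05} to remove the weight $\rho$), but the mechanism you describe for producing the factor $s\log^{1+b}(s)$ is not the correct one, and the proposal is internally inconsistent on this point. In the paper the corrected functional $\mathcal{N}_m(w(s))=(\log s)^{-b}\big(\mathcal{E}(w(s))+m\mathcal{J}(w(s))\big)+m^2e^{-s}$, with $\mathcal{J}(w(s))=-\frac{1}{s\log s}\int_{B(O,1)} w\,\partial_s w\,\rho\,dy$ and $b=\tfrac{m(p+3)}2$, is genuinely \emph{decreasing} and \emph{nonnegative}, hence \emph{bounded}. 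The growth $s\log^{1+b}(s)$ does not come from $\mathcal H$ growing, nor from any Gronwall on $\sigma(s)=\sup\mathcal H/(\tau\log^{1+b}\tau)$; it comes from the fact that the dissipation coefficient in $\frac{d\mathcal N_m}{ds}$ is only of size $\frac{1}{s\log^{1+b}(s)}$: one has
\[
\mathcal N_m(w(s))-\mathcal N_m(w(s+1))\ \ge\ \frac{C}{s\log^{1+b}(s)}\int_s^{s+1}\!\!\int_{B(O,1)}\Big(\tfrac{(\partial_\tau w)^2}{1-|y|^2}+|\nabla w|^2(1-|y|^2)+w^2\Big)\rho\,dy\,d\tau,
\]
and since $0\le\mathcal N_m\le\mathcal N_m(w(s_0))$, the unit-interval integral is $\le C\,\mathcal N_m(w(s_0))\,s\log^{1+b}(s)$. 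Your Step~5 is therefore unnecessary, and in fact contradicts your Step~3 claim that $\mathcal H$ has a finite limit. Note also that your integrating factor cannot be bounded below: with $\theta'(s)\sim\frac{1}{s\log s}$ one gets $\theta(s)\sim\log\log s$ and $e^{-\theta(s)}\sim(\log s)^{-c}\to 0$, which is precisely the $(\log s)^{-b}$ prefactor in $\mathcal N_m$.

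The second gap is the role of the corrective term. The derivative of $\mathcal E$ alone contains a \emph{bad} positive contribution $\frac{C}{s\log^{a+1}(s)}\int|w|^{p+1}g(\phi w)\rho\,dy$ (see \eqref{Eqn:EstDerivE}); this cannot be ``absorbed via Sobolev/trace estimates'' as you suggest, since there is nothing of comparable strength with a good sign in $d\mathcal E/ds$. The specific choice $\mathcal J$ is designed so that $\frac{d\mathcal J}{ds}$, after using \eqref{Eqn:Wavew}, produces a \emph{good} term $-\frac{c}{s\log^{a+1}(s)}\int|w|^{p+1}g(\phi w)\rho\,dy$ (coming from $-\frac{1}{s\log s}\int w\cdot e^{-\frac{2sp}{p-1}}\log^{\frac a{p-1}}(s)f(\phi w)\rho\,dy$); taking $m\gg 1$ in $\mathcal E+m\mathcal J$ is what kills the bad term. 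Finally, the nonnegativity $\mathcal N_m\ge 0$ is not obtained from a pointwise lower bound on the potential part; it is a blow-up-criterion argument: if $\mathcal N_m(w(S_1))<0$, one shifts the scaling time by a small $\delta>0$, uses that $\tilde w^\delta$ stays bounded in $H^1(B(O,1))$ as $s\to\infty$ to show $\mathcal N_m(\tilde w^\delta(s))\to 0$, and contradicts monotonicity.
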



\medskip

We then prove pointwise bounds of $ \left\|  \left( w(s), \partial_{s} w(s) \right) \right\|^{2}_{H^{1}(B(O,1)) \times L^{2} (B(O,1))}$. More precisely, we prove in Section \ref{Sec:ProofPropBoundFinalWs}
the following proposition:

\begin{prop}[Bounds on $(w,\partial_sw)$ in $H^1\times L^2(B(O,1))$] \label{Prop:BoundFinalWs}
  Let $u$ be a solution of \eqref{equ}-\eqref{pert2}  defined
  on some set $\q D$ \eqref{defD} for some $1$-Lipschitz function
  $x\longmapsto T(x)$.  Let $x_{0}$ be a non-characteristic
  point. Then, there exists $t_{0}(x_{0}) \in [ 0, T(x_{0}) ) $ such
  that for all $T_{0} \in \left[ \frac{t_{0}(x_{0}) + T(x_{0})}{2},
  T(x_{0}) \right) $,  $ x \in \mathbb{R}^N: | x - x_{0} | \leq
\frac{T_{0} - \frac{ t_{0}(x_{0}) + T(x_{0}) }{2}}{\delta(x_{0})} $,
  and $  s \geq - \log
  \left( T^{*}(x) - t_{0}(x_{0})
  \right) $, we have

\begin{equation}
\begin{array}{l}
\| w(s) \|_{H^{1} \left( B(O,1) \right) }^{2} + \left\| \partial_{s} w
  (s) \right\|_{L^{2} \left( B(O,1) \right)}^{2} \le K,
\end{array}
\label{Eqn:BoundFinalWs}
\end{equation}
where $ w :=  w_{x, T^{*}(x)}$ is the function defined in
(\ref{Eqn:SimilVar}), $K$ depends only on an upper bound
on $T(x_0)$, $1/T(x_0)$, the local norm
$\| u(t_{0}), \partial_{t} u (t_{0}) \|_{H^{1} \times L^{2}  \left( B \left( x_{0},  \frac{T_{0} - t_{0}(x_{0})}{\delta_{0}(x_{0})} \right)\right)}$,
and $\delta_0(x_0)<1$, the slope of some non-characteristic
cone $\q C_{x_0,\delta_0(x_0)}\subset \q D$.

\medskip


\medskip

%

\end{prop}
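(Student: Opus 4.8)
The plan is to follow the scheme developed by Merle and Zaag in \cite{MZimrn05} for the pure power case \eqref{fup}, as adapted by Hamza and Zaag in \cite{HZjmaa20}: one first establishes the bound in the natural weighted spaces $H^1_\rho(B(O,1))\times L^2_\rho(B(O,1))$, where $\rho$ is defined in \eqref{Eqn:ValAlpha}, and only at the end upgrades it to the unweighted spaces appearing in \eqref{Eqn:BoundFinalWs}. The two ingredients are the time-average estimate of Proposition \ref{Prop:BoundPolynws} and the Lyapunov functional $\mathcal{H}$ of Lemma \ref{lemlyap} (see \eqref{Eqn:DefHN}), which is a suitable perturbation of the energy $\mathcal{E}$ of \eqref{Eqn:Energyw} designed to absorb the time-dependent terms in \eqref{Eqn:Wavew}.

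I would proceed in four steps. \emph{First}, integrating the dissipation identity of Lemma \ref{lemlyap}, of the form $\frac{d}{ds}\mathcal{H}(w(s))\le -c\int_{B(O,1)}(\partial_s w)^2\frac{\rho}{1-|y|^2}\,dy+\mathcal{R}(s)$ with a remainder $\mathcal{R}(s)$ coming from $\gamma(s)$, the $\frac{2a}{(p-1)s\log s}$ coefficients, and the term $e^{-\frac{2sp}{p-1}}\log^{\frac{a}{p-1}}(s)f(\phi w)$, and using Proposition \ref{Prop:BoundPolynws} to bound $\int_s^{s+1}|\mathcal{R}(\tau)|\,d\tau$ by the slowly growing quantity $Ks\log^{1+b}(s)$ times the small factors carried by $\mathcal{R}$, one gets that $\mathcal{H}(w(s))$ is bounded from above for $s$ large. \emph{Second} — the delicate point — one derives a lower bound $\mathcal{E}(w(s))\ge -C$: writing $\mathcal{E}$ as the sum of the nonnegative terms of \eqref{Eqn:Energyw} minus a nonlinear term comparable to $\int_{B(O,1)}|w|^{p+1}\rho\,dy$, one controls $\int_s^{s+1}\int_{B(O,1)}|w|^{p+1}\rho\,dy\,d\tau$ by combining a Pohozaev/virial identity (obtained by differentiating $\int_{B(O,1)}w\,\partial_s w\,\rho\,dy$ along \eqref{Eqn:Wavew} and using Cauchy--Schwarz) with Proposition \ref{Prop:BoundPolynws}, which together with the upper bound on $\mathcal{H}$ and the almost-monotonicity forces $\mathcal{H}(w(s))$, hence $\mathcal{E}(w(s))$, to be bounded from below. \emph{Third}, $\mathcal{H}$ being now two-sided bounded, $\int_{s_1}^{\infty}\int_{B(O,1)}(\partial_s w)^2\frac{\rho}{1-|y|^2}\,dy\,d\tau<\infty$, so the kinetic part decays on average. \emph{Fourth}, combining the upper bound on $\mathcal{E}(w(s))$, this decay, and a weighted Gagliardo--Nirenberg inequality — available precisely because $p$ is subconformal in the sense of \eqref{condp}, which lets $\int_{B(O,1)}|w|^{p+1}\rho\,dy$ be absorbed by a small multiple of $\int_{B(O,1)}(|\nabla w|^2-(y\cdot\nabla w)^2+w^2)\rho\,dy$ plus a constant — one obtains $\|w(s)\|_{H^1_\rho}^2+\|\partial_s w(s)\|_{L^2_\rho}^2\le C$.

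It then remains to pass from the weighted to the unweighted norm, since $\rho$ degenerates on $\partial B(O,1)$. For a point $y_*$ close to $\partial B(O,1)$ one applies the weighted estimate not to $w=w_{x,T^*(x)}$ but to the similarity variables' function $w_{x',T^*(x')}$ centered at a point $x'$ slightly shifted from $x$ in the direction of $y_*$: by the non-characteristic assumption on $x_0$ (a cone $\q C_{x_0,\delta_0(x_0)}\subset\q D$ of slope $\delta_0<1$), and because $x$ ranges only over $|x-x_0|\le(T_0-\tfrac{t_0(x_0)+T(x_0)}{2})/\delta(x_0)$ as in the statement, such a displaced point $x'$ is again non-characteristic, so the interior weighted bound of the previous step holds for $w_{x',T^*(x')}$ on, say, $B(O,1/2)$, and this controls $w$ on a fixed neighborhood of $y_*$ in the original frame with no degeneracy. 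A finite covering of $B(O,1)$ by images of such sub-balls, together with the finite speed of propagation relating the functions $w_{x',T^*(x')}$, then yields \eqref{Eqn:BoundFinalWs}.

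The main obstacle is the non-autonomy of \eqref{Eqn:Wavew}: unlike in \cite{MZimrn05}, the functional $\mathcal{E}$ of \eqref{Eqn:Energyw} is not itself a Lyapunov functional, and one must quantify the asymptotics of $g$ and of $F$ (via Lemma \ref{Lem:OdeAsymp}) and the precise decay rates $\frac1{s\log s}$, $\frac1{s^2\log s}$, $\frac1{s^2\log^2 s}$ appearing in $\gamma(s)$ in \eqref{Eqn:ValGamma} and in $\phi(s)$ in \eqref{Eqn:Defphi}, in order to show that $\mathcal{R}(s)$, after insertion of the growing bound of Proposition \ref{Prop:BoundPolynws}, is still dominated by the dissipation. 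A second sensitive point is the lower bound on the energy in the second step, where the Merle--Zaag self-improvement argument has to be run in a setting where exact scale invariance is lost — although here, in contrast with the lower bound of Theorem \ref{Thm:BlowUpRate} treated separately in Section \ref{Sec:LowerBoundProof}, the perturbation terms are weak enough that careful bookkeeping suffices.
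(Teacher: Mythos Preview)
Your global architecture --- weighted bounds first, then a covering argument to pass to unweighted $H^1\times L^2$ --- matches the paper, and your final paragraph on the covering is correct (this is exactly \eqref{Eqn:CoverConseqGradw}). But the core of your argument, Steps~1--4, skips an essential bootstrap that the paper cannot do without.

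\medskip

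\textbf{The gap.} You claim that integrating the dissipation identity of Lemma~\ref{lemlyap} and controlling the remainder $\mathcal R$ via Proposition~\ref{Prop:BoundPolynws} yields $\mathcal H(w(s))$ bounded. It does not. The functional $\mathcal N_m$ of Lemma~\ref{lemlyap} is $(\log s)^{-b}\mathcal H_m+m^2e^{-s}$; its monotonicity and nonnegativity give only $\mathcal H_m(w(s))\lesssim (\log s)^b$, which \emph{grows}. Equivalently, the dissipation inequality for $\mathcal H_m$ itself (combine Lemmas~\ref{Lem:EstDerivE} and~\ref{Lem:EstDerivJ}) carries a term $\frac{m(p+3)}{2s\log s}\mathcal E(w)$ of indefinite sign which cannot be absorbed. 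Moreover, the main remainder in \eqref{Eqn:EstDerivE} is $\frac{C}{s\log^{a+1}(s)}\int|w|^{p+1}g(\phi w)\rho\,dy$, and Proposition~\ref{Prop:BoundPolynws} controls only quadratic time-averages, not $L^{p+1}$ ones; inserting Sobolev and the growing bound $Ks\log^{1+b}(s)$ makes this remainder worse, not summable.

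\medskip

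\textbf{What the paper actually does.} The proof is a two-pass bootstrap. \emph{First pass:} from Proposition~\ref{Prop:BoundPolynws} and a mean-value/fundamental-theorem-of-calculus trick (together with the interpolation \eqref{Eqn:BoundLpEpsw}) one extracts a \emph{pointwise polynomial} bound $\|w(s)\|_{H^1}+\|\partial_s w(s)\|_{L^2}\lesssim s^C$ (Lemma~\ref{Lem:PointwiseEstw}). \emph{Second pass:} this polynomial bound is fed back into the estimate of $\frac{d\mathcal E}{ds}$ to sharpen the remainder from $\frac{C}{s\log^{a+1}(s)}$ to $\frac{C\log\log\log s}{s\log^{a+2}(s)}$ (Lemma~\ref{Lem:SndDerivE}); only then can one build a \emph{second}, finer functional $\tilde{\mathcal L}_m(w)=e^{2C/\sqrt{\log s}}\mathcal L_0(w)+m s^{-1/2}$ (see \eqref{Eqn:DefL0}--\eqref{Eqn:DefL}) which is genuinely monotone and yields $O(1)$ two-sided bounds (Corollary~\ref{Cor:BoundL0}). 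Your proposal has no analogue of this refined functional.

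\medskip

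\textbf{Two further points.} Your Step~2 obtains the lower bound on $\mathcal E$ by a virial argument; the paper instead reruns the $\tilde w^\delta$-shifting/blow-up-criterion argument from the proof of Lemma~\ref{lemlyap}(ii) on $\tilde{\mathcal L}_m$. And your Step~4 proposes to absorb $\int|w|^{p+1}\rho\,dy$ into the quadratic part via Gagliardo--Nirenberg ``plus a constant''; this is not available without a prior bound on $w$. The paper instead controls the \emph{time average} of the potential term by multiplying \eqref{Eqn:Wavew} by $w\rho$ and combining with the integrated form of $\mathcal L_0$ (the Pohozaev-type identity \eqref{Eqn:Ident1}--\eqref{Eqn:CombinEq}, Lemma~\ref{Lem:BoundAvPot}), and only then reruns the first-pass argument with $O(1)$ inputs to close.
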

Proposition \ref{Prop:BoundFinalWs} yields the upper bound in Theorem
\ref{Thm:BlowUpRate} by getting back to the $x$ and $t$ variables from
the $y$ and $s$ variables thanks to the similarity variables'
definition \eqref{defw} and using the monotone convergence theorem. The proof uses intermediate results that we prove in Section
\ref{Sec:IntermResults}.

We now discuss the main interests of this paper. \\
A first interest lies in the proof of the upper bound of the estimate
(\ref{Eqn:BlowUpRate}). It is well-known that the function $loglog$ goes more slowly to infinity than the function $log$ as the argument goes to infinity. Therefore it is tempting to believe that the $loglog$ perturbation is less severe than the $log$ one; consequently, the proof of the blow-up rate should be easier. The arguments in this paper show that the rate of variation of the perturbation of  (\ref{fup}) does not play a role in proving the ``$\leq $'' inequality of (\ref{Eqn:BlowUpRate}). We design a Lyapunov functional in similarity variables adapted to the problem (\ref{pert2}) that satisfies some nice decreasing and positivity properties (see Section \ref{Sec:DesLyap}); then, by using delicate energy, interpolation and nonlinear estimates, we manage to prove polynomials bounds of $w$, a solution of (\ref{Eqn:Wavew}), and then a nice decaying property of a new Lyapunov functional (see Section \ref{Sec:IntermResults}). Finally we prove  Proposition \ref{Prop:BoundFinalWs} in Section \ref{Sec:ProofPropBoundFinalWs}. \\
A second interest lies in the proof of the lower bound of the estimate
(\ref{Eqn:BlowUpRate}). We highlight the fact that unlike the pure power case treated by Merle and Zaag in \cite{MZajm03}, the lower bound is not a direct consequence of the local-in-time well-posedness in the energy space combined with the invariance by scaling. 
In fact, due to the lack of that invariance, we had to introduce a new argument  below in Section \ref{Sec:LowerBoundProof}, which makes a major novelty of our paper. Our argument relies on a delicate analysis
of the local behavior  
of solutions of \eqref{equ}-\eqref{pert2} that takes into account the size of the scaling parameter $\lambda $ together with a modification of the scaling argument. We would like to mention that our method is valid not only for the $\log \log$ nonlinearity. It extends in fact to other types of nonlinearities, including the $\log$ nonlinearity considered by Hamza and Zaag in \cite{HZna21}, where the lower bound was given with no details of the proof. \\
Before concluding the introduction, we would like to mention that with the loglog nonlinearity, our paper is part of the recent wave of interest in non-scale-invariant PDEs. That wave includes the already mentioned papers of Hamza and Zaag on the wave equation with a log nonlinearity (see \cite{HZjmaa20} and \cite{HZna21}), together with their twin work in the parabolic case (see \cite{HZarma22}). We also mention other contributions by Souplet and co-authors, dedicated to more general non-homogeneous nonlinearities, both in the elliptic and the parabolic case. See in particular Souplet \cite{Sdcds23}, Quittner and Souplet \cite{QSarxiv24}, Chabi and Souplet \cite{CSarxiv24}.

\textbf{Acknowledgements}: This research of the first author was partially funded by a URB grant (ID: 3245), a CEDRE grant (Ref: PHC CEDRE 2021: Projet $N^{o}$ 44456XJ), plus a CAMS grant from the American University of Beirut. The first author author is grateful for the support of the Center for Advanced Mathematical Sciences (CAMS ORCID: 0009-0004-5763-5004) at the American University of Beirut. The second author wishes to thank Pierre Raphael and the ERC project SWAT for their support.

\section{Notation}
We recall some notation, some of which was already used in the
introduction. If $X\in \m R$,  then $\langle X \rangle =
\left(1+X^2\right)^{\frac 12}$.\\
We write $X\ll Y$ if the value of $X$ is much smaller that that of $Y$, $X \gg Y$ if the value of $X$ is much larger than that of $Y$, and $X \approx Y $ if  $X \ll Y$ and $Y \ll X$ are not true. We write
$X = o(Y)$ if there exists a constant $0 < c \ll 1$ such that  $|X| \leq c |Y|$. We define
$X+ = X + \epsilon$ for $ 0 < \epsilon \ll 1$.\\
We also write $X\lesssim Y$ if $X\le CY$ for some $C>0$ that does not depend on
$X$ and $Y$. 
If $Y+$ appears in the mathematical expression
$X \leq C Y+$ then $ C $ may depend on $ \epsilon $ but not on $X$ and $Y$.
The origin of $\m R^N$ is denoted by
$O$ (this is mostly used to
denote by $B(O,r)$, the ball centered at the origin with radius $r>0$).
Unless otherwise specified, we let in the sequel $f$ (resp. $u$) be a function depending on the space variable (resp.
the space variable and the time variable). Unless otherwise specified, for sake of simplicity, we do not mention the spaces to which $f$ and $u$
belong in the estimates: this exercise is left to the reader.

\section{Proof of the lower bound in (\ref{Eqn:BlowUpRate})}
\label{Sec:LowerBoundProof}

Let $u$ be a solution of \eqref{equ}-\eqref{pert2}
with data $ \left( u(0),\partial_{t}u(0) \right) := (u_{0},u_{1}) \in H^{1}_{loc} \times L^{2}_{loc}(\mathbb{R}^{N})$ and with a $1-$ Lipschitz  graph. \\
\\
We claim that the ``$\leq $'' inequality of (\ref{Eqn:BlowUpRate})
holds. We may assume
without loss of generality
that $k$ is small enough so that all the statements below are true.
Let $ A (x):= \log^{- \frac{a}{p-1}} (- \log(x))$ if $x \ll  1 $ and  $A(x) := 1 $ if $x \gtrsim 1 $. \\
Assume that this is not the case. This means that there exists  $t_{1} \in [0, T(x_{0}))$ such that

\begin{equation}
\begin{array}{l}
\frac{\| u(t_{1}) \|_{L^{2} \left( B(x_{0}, T(x_{0}) - t_{1}) \right) } }{ \left(  T(x_{0}) - t_{1} \right)^{\frac{N}{2} - \frac{2}{p-1}}} +
\frac{ \| \nabla u(t_{1}) \|_{L^{2} \left( B(x_{0}, T(x_{0}) - t_{1}) \right)} +  \| \partial_{t} u(t_{1}) \|_{L^{2} \left( B(x_{0}, T(x_{0}) - t_{1}) \right)} } {\left(  T(x_{0}) - t_{1} \right)^{\frac{N}{2}- \frac{p+3}{p-1}}} \leq k A \left( T(x_{0})- t_{1} \right) \cdot
\end{array}
\nonumber
\end{equation}
Since $x_{0}$ is a non-characteristic point, we can use the dominated convergence theorem to infer that there exists $ 1 \gg \epsilon > 0$  such that

\begin{equation}
\begin{array}{l}
\frac{\| u(t_{1}) \|_{L^{2} \left( B \left( x_{0}, (T(x_{0}) - t_{1}) (1 + \epsilon) \right) \right) } }{ \left(  T(x_{0}) - t_{1} \right)^{\frac{N}{2} - \frac{2}{p-1}}} +
\frac{ \| \nabla u(t_{1}) \|_{L^{2} \left( B(x_{0}, T(x_{0}) - t_{1}) (1 + \epsilon) \right)} +
\| \partial_{t} u(t_{1}) \|_{L^{2} \left( B(x_{0}, T(x_{0}) - t_{1}) (1+ \epsilon) \right)} } {\left(  T(x_{0}) - t_{1} \right)^{\frac{N}{2}- \frac{p+3}{p-1}}} \leq 2 k A \left( T(x_{0})- t_{1} \right) \cdot
\end{array}
\nonumber
\end{equation}
A computation (using an appropriate change of variable) shows that for $\lambda = T(x_{0}) - t_{1}$, we get
\begin{equation}
\begin{array}{l}
\left\| \left(  f_{\lambda} ,  g_{\lambda} \right) \right\|_{H^{1}  \left( B(O,1 + \epsilon) \right) \times L^{2}  \left( B(O,1 + \epsilon) \right)} \lesssim k A (\lambda)
\end{array}
\label{Eqn:Estulambdalambdat1}
\end{equation}
with $ \left( f_{\lambda}(x), g_{\lambda}(x) \right) := \lambda^{\frac{2}{p-1}} \left( u \left(  t_{1}, \lambda x + x_{0}  \right),
 \lambda \partial_{t} u  \left( t_{1} , \lambda x + x_{0} \right) \right) $.
Let  $C^{'} \gg 1$ be a constant large enough such that all the statements below are true. Let $K:= \left\{ (t,x): t \in [0,1 + \epsilon), \, \text{and} \, t < 1 + \epsilon - |x| \right\}$.  We claim that there exists a unique solution \\
 $ v \in \mathcal{X} := \left\{ w: \; w \in \mathcal{C}_{t} H^{1} (K) \cap \mathcal{C}^{1}_{t} L_{x}^{2} (K), \; \text{and} \;
\| v \|_{\mathcal{X}} \leq C^{'} k A(\lambda)  \right\} $  such that  $ v = \tilde{\Psi}(v) $  with
$ \| v \|_{\mathcal{X}} := \sup \limits_{t \in [ 0,1 + \epsilon) } \sup \limits_{ 1 + \epsilon - t > R > 0}
\left( \| v(t) \|_{H^{1}(B(O,R))},  \| \partial_{t} v(t) \|_{L^{2}(B(O,R))} \right)  $ and

\begin{equation}
\begin{array}{l}
\tilde{\Psi}(w)(t) :=  \partial_{t} \mathcal{R}(t)* f_{\lambda} + \mathcal{R}(t) * g_{\lambda} + \int_{0}^{t} \mathcal{R}(t-s)*
h_{\lambda} \left( w(s) \right) \; ds \cdot
\end{array}
\nonumber
\end{equation}
Here $ h_{\lambda}(f) :=  |f|^{p-1} f \log^{a} \left( \log \left( 10 + \lambda^{-\frac{4}{p-1}} f^{2} \right) \right)  $.

Let $\left( w, w_{1}, w_{2} \right) \in \mathcal{X}^{3}$. In the sequel, we implicitly use elementary estimates involving the logarithm \footnote{such as $\log(xy) = \log(x) + \log(y)$;
$\log(x^{-1})= - \log(x) $;   $\log(x) \lesssim x^{0+}$, $\log(x) \lesssim x^{0-}$  for $x \gtrsim 1$, $x \lesssim 1$ respectively .} and the Sobolev embedding
$ H^{1} \left( B (O,R) \right)  \hookrightarrow L^{s} \left( B (O,R) \right)$ for $ 2 \leq s \leq (2p)+$. \\
We first estimate $ \| \tilde{\Psi}(w_{2}) - \tilde{\Psi}(w_{1}) \|_{\mathcal{X}} $. Assume that $\lambda \gtrsim 1$. We have
$ \| \tilde{\Psi}(w_{2}) - \tilde{\Psi}(w_{1}) \|_{\mathcal{X}} \lesssim   \left\| h_{\lambda}(w_{1}) - h_{\lambda}(w_{2}) \right\|_{L_{t}^{1} L_{x}^{2}(K)} $. Hence, the fundamental theorem of calculus and the H\"older inequality show that
\begin{equation}
\begin{array}{ll}
\| \tilde{\Psi}(w_{2}) - \tilde{\Psi}(w_{1}) \|_{X} & \lesssim  (1 + \epsilon) \sum \limits_{\substack{\bar{w} \in \{ w_{1}, w_{2} \} \\ r \in \{  p-1, (p-1)+ \} }}
\| \bar{w} \|^{r}_{L_{t}^{\infty} L_{x}^{2r} (K)} \| w_{2} - w_{1} \|_{\mathcal{X}} \cdot
\end{array}
\nonumber
\end{equation}
Assume that $\lambda \ll 1$. Let $a > 0$. Then

\begin{equation}
\begin{array}{l}
\| \tilde{\Psi}(w_{2}) - \tilde{\Psi}(w_{1}) \|_{\mathcal{X}} \\
\lesssim (1 + \epsilon)
\left(
\sum \limits_{\substack{\bar{w} \in \{ w_{1}, w_{2} \} \\ r \in \{ p \pm , (p-1)\pm \}}}
\| \bar{w} \|^{r}_{L_{t}^{\infty} L_{x}^{2r}(K)} + \log^{a} \left(- \log ( \lambda) \right)
\sum \limits_{\bar{w} \in \{ w_{1},w_{2} \}}  \| \bar{w} \|^{p-1}_{L_{t}^{\infty} L_{x}^{2(p-1)}(K)}
\right) \| w_{2} - w_{1} \|_{\mathcal{X}} \cdot
\end{array}
\nonumber
\end{equation}
Let $a < 0 $. Let $\bar{w} \in \{ w_{1}, w_{2} \}$. Then $ \left\| h_{\lambda}(w_{1}) - h_{\lambda}(w_{2}) \right\|_{L_{t}^{1} L_{x}^{2}(K)} $ is bounded by terms of a finite sum of terms of the form $ X_{1}(\lambda) $ and $ X_{2}(\lambda) $ with

$ X_{1}(\lambda) := \left\| \log^{a} \left( \log \left( 10 + \frac{\bar{w}^{2}}{\lambda^{\frac{4}{p-1}}} \right) \right) |\bar{w}|^{p-1} \left( w_{2} - w_{1} \right)
\right\|_{L_{t}^{1} L_{x}^{2} ( K \cap \{ \bar{w}^{2} \geq  \lambda^{\frac{4}{p-1}-} \} )} $, and \\
$ X_{2}(\lambda) := \left\|  \log^{a} \left( \log \left( 10 + \frac{\bar{w}^{2}}{\lambda^{\frac{4}{p-1}}} \right) \right) |\bar{w}|^{p-1} \left( w_{2} - w_{1} \right) \right\|_{L_{t}^{1} L_{x}^{2} ( K  \cap \{ \bar{w}^{2} \leq \lambda^{\frac{4}{p-1}-}\}
)}$. We have

\begin{equation}
\begin{array}{ll}
X_{1}(\lambda)  &  \lesssim (1 + \epsilon) \log^{a} \left( -\log(\lambda) \right)
\| \bar{w} \|^{p-1}_{L_{t}^{\infty} L_{x}^{2(p-1)} (K) }
\| w_{2} - w_{1} \|_{\mathcal{X}} \cdot
\end{array}
\nonumber
\end{equation}
We have

\begin{equation}
\begin{array}{ll}
X_{2} (\lambda) & \lesssim (1 + \epsilon) \| \bar{w} \|^{p-1}_{L_{t}^{\infty} L_{x}^{\infty} ( K  \cap \{ \bar{w}^{2} \leq \lambda^{\frac{4}{p-1}-}  \} )}
\| w_{2} - w_{1} \|_{L_{t}^{\infty} L_{x}^{2}(K)} \\
& \lesssim (1 + \epsilon) \lambda^{0+} \| w_{2} - w_{1} \|_{\mathcal{X}}.
\end{array}
\nonumber
\end{equation}
Hence, the estimates above show that  $\tilde{\Psi}$ is a contraction. \\
\\
We then estimate $\left\| \tilde{\Psi}(w)  \right\|_{\mathcal{X}}$.  Assume that $\lambda \gtrsim 1$. Then  (\ref{Eqn:LocNrjEst}), (\ref{Eqn:Estulambdalambdat1}), and the H\"older inequality show that

\begin{equation}
\begin{array}{ll}
\left\| \tilde{\Psi}(w)  \right\|_{\mathcal{X}}
& \lesssim  k A(\lambda) + (1 + \epsilon) A(\lambda) \sum \limits_{ r \in \{ p ,p+ \} }
\| w \|^{r}_{L_{t}^{\infty} H^{1}(K)}.
\end{array}
\label{Eqn:EstTildePsi}
\end{equation}
Assume that $\lambda \ll 1$. Let $a > 0$. Then (\ref{Eqn:EstTildePsi}) holds. Let $a < 0$. Let  $X(w) :=\left\| |v|^{p-1} v \log^{a} \left(  \log \left( 10 +  \lambda^{-\frac{4}{p-1}} w^{2} \right) \right) \right\|_{L_{t}^{1} L_{x}^{2}(K)}$. Then $ X(w) \lesssim  X_{1}(w) + X_{2}(w) $  with

\begin{equation}
\begin{array}{l}
X_{1}(w) := \left\| |w|^{p-1} w \log^{a} \left(  \log \left( 10 +  \lambda^{-\frac{4}{p-1}} w^{2} \right) \right) \right\|_{L_{t}^{1} L_{x}^{2} (K \cap
\{ w^{2} \geq \lambda^{\frac{4}{p-1}-}  \} )}, \; \text{and} \\
X_{2}(w) := \left\| |w|^{p-1} w \log^{a}
\left(  \log \left( 10 +  \lambda^{- \frac{4}{p-1}} w^{2} \right) \right) \right\|_{L_{t}^{1} L_{x}^{2}(K
\cap \{ w^{2} \leq \lambda^{\frac{4}{p-1}-} \} )} \cdot
\end{array}
\nonumber
\end{equation}
We have $ X_{1}(w) \lesssim (1+ \epsilon) \log^{a} \left(  - \log (\lambda) \right)  \| w \|^{p}_{L_{t}^{\infty} H^{1}(K)} $. By interpolation there exists $\theta:= \theta(p) > 0 $ such that

\begin{equation}
\begin{array}{ll}
X_{2}(w) & \lesssim (1+ \epsilon) \| w \|^{p}_{L_{t}^{\infty} L_{x}^{2p} (K  \cap \{
w^{2}  \leq \lambda^{\frac{4}{p-1}-}  \}) } \\
& \lesssim (1+ \epsilon)
\| w \|^{p \theta}_{L_{t}^{\infty} L_{x}^{\infty}( K \cap \{ w^{2} \leq \lambda^{\frac{4}{p-1}-}  \} )}
\| w \|^{p (1- \theta)}_{L_{t}^{\infty} L_{x}^{2} (K) } \\
& \lesssim (1+ \epsilon) \lambda^{0+}  \| w \|^{p(1- \theta)}_{L_{t}^{\infty} H^{1} (K)}
\end{array}
\nonumber
\end{equation}
Hence from the estimates above  we see that $\tilde{\Psi}(\mathcal{X}) \subset \mathcal{X} $. \\
\\
Hence by the fixed point theorem we see that there exists $v \in \mathcal{X}$ such that $ v = \tilde{\Psi}(v) $.\\
\\
Let \\
$\Omega := \left\{ (t,x): \, 0 \leq  t  <
T(x_{0}) + \epsilon \left( T(x_{0}) - t_{1} \right),  \;
|x - x_{0}| < T(x_{0}) + \epsilon \left( T(x_{0}) -t_{1} \right) - t \right\}$. Let $w$ be defined by
$ w(t) := u(t)$ if $ t \in [0, t_{1}]$ and $ w (t) := \frac{1}{\lambda^{\frac{2}{p-1}}} v \left( \frac{t- t_{1}}{\lambda}, \frac{x-x_{0}}{\lambda} \right) $ if $t \in [ t_{1}, T(x_{0})  + \epsilon (T(x_{0}) - t_{1})  ) $. By the
finite speed of propagation we infer that $ w(t) = u(t)$ on $ \Omega \cap ( t_{1} - \alpha, t_{1} + \alpha) $ for $\alpha > 0 $ small.  Hence $ w \in \mathcal{C}_{t} H^{1} (\Omega) \cap \mathcal{C}_{t}^{1} L_{x}^{2} (\Omega)$ and it satisfies $\Psi(w)=w$, with
$\Psi$ defined in (\ref{Eqn:DefPsi}) . This contradicts the definition of $T(x_{0})$.

\section{Design of a Lyapunov functional in similarity variables}
\label{Sec:DesLyap}
This section is at the heart of our argument. We design here a
Lyapunov functional for the similarity variables' PDE
\eqref{Eqn:Wavew}, based on the functional $\q E(w(s))$ \eqref{Eqn:Energyw}, and prove its nonnegativity, thanks to a blow-up
criterion. This way, the Lyapunov functional will turn to be bounded
from above and below. Using some intricate energy and nonlinear arguments, we prove the boundedness of the solution in $H^1\times L^2(B(O,1))$ in
time averages, which yields Proposition
\ref{Prop:BoundPolynws}. If we follow the spirit of earlier papers on
the subject, the $\log \log$ factor in the nonlinearity \eqref{pert2}
makes our argument very delicate.

\medskip

We proceed in 4 subsections, first bounding the derivative of $\q
E(w(s))$ \eqref{Eqn:Energyw}, then introducing a corrective
term. Combining it with $\q E(w(s))$, we define a Lyapunov functional
in the third subsection and prove its nonnegativity. Finally, the last subsection is devoted to
the proof of  Proposition \ref{Prop:BoundPolynws}. \\

\subsection{Estimate of derivative of the energy}

The proposition below yields an estimate of the derivative of $\mathcal{E}(w)$:
\begin{lem}[Upper bound on the derivative of  $\q E(w(s))$
  \eqref{Eqn:Energyw}]
  Consider $w$ a solution of equation \eqref{Eqn:Wavew}.
Let $s \gg 1$. Then there exists a large positive constant $C$ such that

\begin{equation}
\begin{array}{l}
\frac{d \mathcal{E}(w)}{ds} \leq - \frac{3 \alpha}{2} \int_{B(O,1)} (\partial_{s} w)^{2} \frac{1}{1 - |y|^{2}} \; \rho dy +
\frac{C}{s \log^{a+1}(s)} \int_{B(O,1)} |w|^{p+1} g(\phi w) \; \rho
  dy + \Sigma,
\end{array}
\label{Eqn:EstDerivE}
\end{equation}
where $\phi$ is defined in \eqref{Eqn:Defphi} and
\begin{equation}
\begin{array}{l}
\Sigma \leq \frac{C}{s^{2}}  \int_{B(O,1)} |\nabla w|^{2} (1 - |y|^{2}) \; \rho  dy
+ \frac{C}{s^{2}} \int_{B(O,1)} w^{2} \; \rho  dy + C e^{-s} \cdot
\end{array}
\nonumber
\end{equation}

\label{Lem:EstDerivE}
\end{lem}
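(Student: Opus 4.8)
The plan is to differentiate $\mathcal{E}(w(s))$ directly, using equation \eqref{Eqn:Wavew} to replace $\partial_s^2 w$, and then reorganize the resulting terms into a negative dissipation term, a ``nonlinear'' error involving $g(\phi w)$, and a remainder $\Sigma$ controlled by lower-order weighted norms. First I would compute $\frac{d}{ds}\mathcal{E}(w(s))$ term by term. Differentiating the quadratic part $\int_{B(O,1)} \big(\tfrac12(\partial_s w)^2 + \tfrac12(|\nabla w|^2 - |y\cdot\nabla w|^2) + \tfrac{p+1}{(p-1)^2}w^2\big)\rho\,dy$ produces $\int \partial_s w\,\partial_s^2 w\,\rho + \int(\nabla w\cdot\nabla\partial_s w - (y\cdot\nabla w)(y\cdot\nabla\partial_s w))\rho + \tfrac{2(p+1)}{(p-1)^2}\int w\,\partial_s w\,\rho$. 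Substituting $\partial_s^2 w$ from \eqref{Eqn:Wavew} and integrating the divergence term by parts against $\partial_s w\,\rho$, the leading elliptic terms cancel against the gradient contribution, leaving the standard dissipation $-\tfrac{p+3}{p-1}\int(\partial_s w)^2\rho - 2\int (y\cdot\nabla\partial_s w)\partial_s w\,\rho$; the latter is integrated by parts in $y$ using $\dive(\rho y) = (N - 2\alpha\tfrac{|y|^2}{1-|y|^2})\rho$ (following the Antonini--Merle computation) to yield a term of the form $\int(\partial_s w)^2\big(\cdots\big)\rho$ plus the crucial singular term $-2\alpha\int(\partial_s w)^2\frac{|y|^2}{1-|y|^2}\rho$.

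Next I would handle the three genuinely new contributions coming from the non-autonomous structure: (i) the differentiation of the $s$-dependent potential coefficient, which is $-\tfrac{d}{ds}\big(e^{-\frac{2(p+1)s}{p-1}}(\log s)^{\frac{2a}{p-1}}\big)\int F(\phi w)\rho$ plus the chain-rule term $-e^{-\frac{2(p+1)s}{p-1}}(\log s)^{\frac{2a}{p-1}}\int f(\phi w)\phi'w\,\rho$ and $-e^{-\frac{2(p+1)s}{p-1}}(\log s)^{\frac{2a}{p-1}}\int f(\phi w)\phi\,\partial_s w\,\rho$; (ii) the extra drift terms $\tfrac{2a}{(p-1)s\log s}y\cdot\nabla w$ and $\tfrac{2a}{(p-1)s\log s}\partial_s w$ appearing in \eqref{Eqn:Wavew}; (iii) the $\gamma(s)w$ term, where $\gamma(s) - \big(-\tfrac{2(p+1)}{(p-1)^2}\big)$ already absorbs the constant part, so only the $O(\tfrac{1}{s\log s})$ remainder of $\gamma$ survives. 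The key algebraic point is that the $e^{-\frac{2sp}{p-1}}(\log s)^{\frac{a}{p-1}}f(\phi w)$ source term in \eqref{Eqn:Wavew}, tested against $\partial_s w\,\rho$, combines with term (i) so that the ``bad'' $\int f(\phi w)\phi\,\partial_s w\,\rho$ pieces cancel and what is left is $\int F(\phi w)$-type and $\int f(\phi w)\phi w$-type expressions whose difference, using $f(u)u - (p+1)F(u) = $ (something involving $g'$, i.e. the $\log\log$ derivative), produces exactly the advertised term $\tfrac{C}{s\log^{a+1}(s)}\int|w|^{p+1}g(\phi w)\rho$. This is because $F(u)\approx \tfrac{1}{p+1}|u|^{p+1}g(u)$ and $uf(u) - (p+1)F(u)$ is lower order by a factor $g'(u)/g(u)\sim (u^2\log(10+u^2))^{-1}$, which after rescaling by $\phi$ and collecting the explicit $s$-dependent prefactors gives the $\tfrac{1}{s\log^{a+1}s}$ gain. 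I would make this precise using Lemma-type asymptotics for $g$, $g'$ and $F$ (elementary logarithm estimates of the kind footnoted earlier).

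Finally, I would collect all remaining terms into $\Sigma$. The drift terms (ii) tested against $\partial_s w\,\rho$ give, after Cauchy--Schwarz and Young's inequality, contributions $\lesssim \tfrac{1}{s\log s}\big(\int(\partial_s w)^2\rho + \int|\nabla w|^2\rho + \int w^2\rho\big)$; to fit the stated form of $\Sigma$ (with weights $(1-|y|^2)$ on the gradient term and $\tfrac{1}{s^2}$ overall) I would use the Hardy-type inequality $\int(\partial_s w)^2\rho \lesssim \int(\partial_s w)^2\tfrac{1}{1-|y|^2}\rho$ to absorb the $(\partial_s w)^2$ pieces into the large negative $-\tfrac{3\alpha}{2}\int(\partial_s w)^2\tfrac{\rho}{1-|y|^2}$ term (here the constant $\tfrac32$ rather than $2\alpha$ leaves room for this absorption, valid for $s\gg1$), and bound the $\nabla w$ and $w$ pieces by $\tfrac{C}{s^2}\int|\nabla w|^2(1-|y|^2)\rho + \tfrac{C}{s^2}\int w^2\rho$ — the factor $s^2$ instead of $s\log s$ being a harmless weakening, and the $(1-|y|^2)$ weight available because $|\nabla w|^2 - |y\cdot\nabla w|^2 \ge (1-|y|^2)|\nabla w|^2$ controls only the tangential part, but the radial part only enters multiplied by the already-present dissipation; I would verify this is consistent with how the gradient term is bounded in the divergence integration by parts. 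The exponentially small $Ce^{-s}$ term collects the contributions of the explicit $e^{-\frac{2sp}{p-1}}$ and $e^{-\frac{2(p+1)s}{p-1}}$ prefactors acting on the $|w|^{\le p+1}$ terms that are \emph{not} of the form $|w|^{p+1}g(\phi w)$ (e.g. the $f(u) \sim |u|^{p-1}u g(u)$ with the linear-at-infinity part of $g$, or the additive constant in the growth bound \eqref{condg}-type estimate), after bounding $w$ in $L^{p+1}(\rho\,dy)$ by — and this is where I would be careful — an \emph{a priori} polynomial-in-$e^s$ bound that still leaves $e^{-\delta s}$ net decay; alternatively these can be absorbed once one knows $E(w)$ is comparable to the $H^1\times L^2$ norm. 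I expect the main obstacle to be precisely this last bookkeeping: controlling the subleading pieces of $f$ and $F$ (the parts of $g$ and its primitive that grow slower than $\log^a\log$, and the additive constants) uniformly in $s$ \emph{without} yet having the boundedness of $\|w\|_{H^1}$ — one must extract genuine exponential smallness from the explicit prefactors, which forces careful tracking of every power of $e^s$, $\phi$, and $\log s$ through the rescaling $u = \psi_{T_0}w$, together with the logarithmic inequalities $\log x \lesssim x^{0+}$ used to trade logarithmic losses for arbitrarily small powers.
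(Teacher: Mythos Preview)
Your plan follows the same overall route as the paper: multiply \eqref{Eqn:Wavew} by $\partial_s w\,\rho$, integrate, and regroup. The structural identifications (the cancellation of the elliptic terms, the $\Sigma_{r,6}+\Sigma_{r,7}$ combination giving $-2\alpha\int\frac{(\partial_s w)^2}{1-|y|^2}\rho$, and the $F(\phi w)$ versus $\phi w f(\phi w)$ algebra) are all correct. However, there are two concrete gaps in your execution.

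\medskip
\textbf{1. The drift term and the ``harmless weakening''.} Your claim that replacing $\tfrac{1}{s\log s}$ by $\tfrac{1}{s^2}$ is a harmless weakening is backwards: for large $s$, $\tfrac{1}{s\log s}\gg \tfrac{1}{s^2}$, so you cannot bound $\tfrac{C}{s\log s}\int|\nabla w|^2\rho$ by $\tfrac{C}{s^2}\int|\nabla w|^2(1-|y|^2)\rho$. The paper instead uses an \emph{asymmetric} Young inequality: write
\[
\frac{1}{s\log s}\,(y\cdot\nabla w)\,\partial_s w
=\Big(\frac{y\cdot\nabla w}{s}\sqrt{1-|y|^2}\Big)\Big(\frac{\partial_s w}{\log s\,\sqrt{1-|y|^2}}\Big)
\le \frac{|y\cdot\nabla w|^2(1-|y|^2)}{2s^2}+\frac{(\partial_s w)^2}{2\log^2 s\,(1-|y|^2)}.
\]
This is precisely why the gradient carries both the $\tfrac{1}{s^2}$ factor and the weight $(1-|y|^2)$, while the $(\partial_s w)^2$ piece carries only $\tfrac{1}{\log^2 s}$ and is then absorbed into the $-2\alpha$ dissipation (yielding $-\tfrac{3\alpha}{2}$ for $s\gg1$). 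The same splitting handles the $\gamma(s)w\,\partial_s w$ term.

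\medskip
\textbf{2. The $Ce^{-s}$ remainder.} You are right to flag this as the delicate point, but your proposed resolution (an a~priori polynomial-in-$e^s$ bound on $\|w\|_{L^{p+1}}$) would be circular, since this lemma is a step toward such bounds. The paper avoids any a~priori control on $w$ by splitting pointwise into the regions $\{\phi w^2\gg 1\}$ and $\{\phi w^2\lesssim 1\}$. On the second region one has $|\phi w|\lesssim \phi^{1/2}$ \emph{pointwise}, hence $|F(\phi w)|\lesssim \phi^{(p+1)/2}(1+g(\phi^{1/2}))$, and the prefactor $e^{-\frac{2(p+1)s}{p-1}}(\log s)^{\frac{2a}{p-1}}$ multiplied by $\phi^{(p+1)/2}$ is $e^{-\frac{(p+1)s}{p-1}}$ times logarithmic factors, which is $\lesssim e^{-s}$. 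On the first region $\phi^2 w^2\gtrsim \phi$, so $\log(10+\phi^2 w^2)\gtrsim s$ and $\log\log(10+\phi^2 w^2)\gtrsim\log s$; this is what converts the $F-\tfrac{\phi w f(\phi w)}{p+1}$ error (of relative size $\tfrac{1}{\log(10+\phi^2 w^2)\log\log(10+\phi^2 w^2)}$) into the explicit $\tfrac{1}{s\log s}$ gain. This region-splitting, not any norm bound on $w$, is the mechanism.
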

\begin{proof}

We multiply (\ref{Eqn:Wavew})  by  $ \partial_{s} w  \rho $ and we integrate over $B(O,1)$. We get

\begin{equation}
\begin{array}{l}
\Sigma_{l} = \Sigma_{r,1} + ... + \Sigma_{r,8} \nonumber
\end{array}
\nonumber
\end{equation}
Here  $ \Sigma_{l} :=  \int_{B(O,1)} \partial_{s}^{2} w \rho \partial_{s} w \; dy $, $ \Sigma_{r,2} := \int_{B(O,1)}
\frac{1}{\rho} \dive \left( \rho \nabla w - \rho (y \cdot \nabla w) y \right) \rho \partial_{s} w \; dy $, \\
$ \Sigma_{r,3}  := \frac{2a}{(p-1)s \log(s)} \int_{B(O,1)}  (y \cdot \nabla w) \rho \partial_{s} w \; dy $,
$ \Sigma_{r,4} := - \frac{2(p+1)}{(p-1)^{2}} \int_{B(O,1)} w \partial_{s} w  \rho \; dy $, \\
$ \Sigma_{r,5} := \gamma(s) \int_{B(O,1)} w \partial_{s} w \; \rho \; dy $,  $\Sigma_{r,6} := - \left( \frac{p+3}{p-1} - \frac{2a}{(p-1)s \log(s)} \right) \int_{B(O,1)} (\partial_{s} w)^{2} \rho \; dy $, \\
$\Sigma_{r,7} : = - 2 \int_{B(O,1)} y \cdot \nabla \partial_{s} w \rho \; dy $ and\\  $\Sigma_{r,8} := - e^{-\frac{2sp}{p-1}} \log^{\frac{a}{p-1}}(s) \int_{B(O,1)} f(\phi w)  \partial_{s} w  \rho \; dy  $. \\
Integrating by parts whenever it is necessary, and using the inequality $ab \leq \frac{a^{2}}{2} + \frac{b^{2}}{2}$, we get
\begin{equation}
\begin{array}{l}
\Sigma_{r,1} := \frac{d}{ds} \left(  \frac{1}{2} \int_{B(O,1)}
  (\partial_{s} w)^{2} \rho \; dy \right), \\
  \Sigma_{r,2} = - \frac{d}{ds} \left( \frac{1}{2} \int_{B(O,1)} \left( |\nabla w|^{2}  - |\nabla w \cdot y|^{2} \right) \rho \; dy \right), \\
\\
\left| \Sigma_{r,3} \right| \lesssim  \frac{1}{s^{2}} \int_{B(O,1)} |\nabla w|^{2} (1 - |y|^{2}) \rho \; dy + \frac{1}{\log^{2}(s)} \int_{B(O,1)} (\partial_{s} w )^{2} \; \frac{\rho}{1-|y|^{2}} \; dy, \\
\\
\Sigma_{r,4}  = - \frac{d}{ds} \left(  \int_{B(O,1)} \frac{p+1}{(p-1)^{2}} w^{2} \rho \; dy \right), \\
\\
\left| \Sigma_{r,5}(s) \right| \lesssim \frac{1}{s^{2}} \int_{B(O,1)} w^{2} \rho \; dy
+ \frac{1}{\log^{2}(s)} \int_{B(O,1)} (\partial_{s} w )^{2} \; \frac{\rho}{1-|y|^{2}} \; dy, \; \text{and} \\
\\
\Sigma_{r,6}(s) = - \frac{p+3}{p-1} \int_{B(O,1)} (\partial_{s} w)^{2} \rho \; dy +
O \left( \frac{1}{s \log(s)} \right) \int_{B(O,1)} (\partial_{s} w)^{2} \rho \; dy  \cdot
\end{array}
\nonumber
\end{equation}
We then estimate $ \Sigma_{r,7}(s) $. Observe from (\ref{Eqn:ValAlpha}) that  $ 2 \alpha + N = \frac{p+3}{p-1} $. Hence

\begin{equation}
\begin{array}{ll}
\Sigma_{r,7} & = - \int_{B(O,1)} y \cdot \nabla \left( (\partial_{s} w)^{2} \right) \rho \; dy \\
& = \frac{p+3}{p-1} \int_{B(O,1)} (\partial_{s} w)^{2} \rho \; dy - 2 \alpha  \int_{B(O,1)} \frac{(\partial_{s} w)^{2}}{1- |y|^{2}} \rho \; dy
\end{array}
\nonumber
\end{equation}
We then estimate $\Sigma_{r,8} $. We have $\Sigma_{r,8} = I + J $ with

\begin{equation}
\begin{array}{l}
I := - \int_{B(O,1)} e^{- \frac{2sp}{p-1}} \log^{\frac{a}{p-1}}(s) \phi^{-1}  \frac{d}{ds} \left( F(\phi w) \right) \rho \; dy, \; \text{and} \\
J :=  \int_{B(O,1)} e^{- \frac{2sp}{p-1}} \log^{\frac{a}{p-1}}(s)  F^{'}(\phi w) \phi^{-1} \phi^{'} w  \rho \; dy \cdot
\end{array}
\nonumber
\end{equation}
By integrating by parts, we see that
\begin{equation}
\begin{array}{ll}
I & =  - \frac{d}{ds} \left( \int_{B(O,1)} e^{-\frac{2(p+1)s}{p-1}} \log^{\frac{2a}{p-1}}(s) F(\phi w) \rho \; dy  \right)\\
&- \frac{2(p+1)}{p-1} e^{- \frac{2(p+1)s}{p-1}}  \log^{\frac{2a}{p-1}}(s) \int_{B(O,1)} F(\phi w)  \rho \; dy \\
& + \frac{2a}{p-1}  e^{- \frac{2(p+1)s}{p-1}} \frac{ \log^{\frac{2a}{p-1} - 1}(s)}{s}  \int_{B(O,1)} F(\phi w) \rho \; dy \cdot
\end{array}
\nonumber
\end{equation}
We also see from (\ref{Eqn:Defphi}) that

\begin{equation}
\begin{array}{rl}
J =& \frac{2(p+1)}{p-1} e^{- \frac{2(p+1)s}{p-1}} \log^{\frac{2a}{p-1}}(s) \int_{B(O,1)} \frac{\phi w f(\phi w)}{p+1} \rho \; dy\\
&- \frac{2a}{(p-1)s} e^{- \frac{2(p+1)s}{p-1}} \log^{\frac{2a}{p-1}-1}(s)  \int_{B(O,1)} \frac{\phi w f(\phi w)}{2} \rho \; dy \cdot
\end{array}
\nonumber
\end{equation}
Hence,
\begin{equation}
\begin{array}{l}
- \Sigma_{r,8} = \frac{d}{ds} \left( \int_{B(O,1)} e^{-\frac{2(p+1)s}{p-1}} \log^{\frac{2a}{p-1}}(s) F(\phi w) \rho \; dy  \right) + X + Y, \; \text{with}
\end{array}
\nonumber
\end{equation}

\begin{equation}
\begin{array}{l}
X :=  \frac{2(p+1)}{p-1} e^{- \frac{2(p+1) s}{p-1}} \log^{\frac{2a}{p-1}}(s) \int_{B(O,1)} \left( F(\phi w) - \frac{\phi w f(\phi w) }{p+1} \right) \rho \; dy \; \text{and} \\
Y := - \frac{2a}{p-1} e^{- \frac{2(p+1)s}{p-1}} \frac{\log^{\frac{2a}{p-1} - 1}(s)}{s} \int_{B(O,1)}  \left( F(\phi w) - \frac{\phi w f(\phi w) }{2} \right) \rho \; dy \cdot
\end{array}
\label{Eqn:DefXY}
\end{equation}
It remains to estimate $X$ and $Y$.  To this end we estimate $ F(\phi w) - \frac{\phi w  f(\phi w)}{p+1} $  by using
(\ref{Eqn:Defphi}), (\ref{Eqn:DefF1}), (\ref{Eqn:DecompF}), (\ref{Eqn:ApproxPot}), $|F(x)| \lesssim |x|^{p+1} \left( 1 + g(x) \right)$, and elementary estimates. Since
$ \{ \phi w^{2} \gg 1 \}  \subset \{ \phi^{2} w^{2} \gtrsim \phi \} $ and  $ \{ \phi w^{2}  \lesssim 1 \}  \subset \{ \phi w  \lesssim \phi^{\frac{1}{2}} \} $ we get

\begin{equation}
\begin{array}{l}
\phi w^{2} \gg 1: \;  \left| F(\phi w) - \frac{\phi w f(\phi w)}{p+1} \right| \lesssim \frac{  \left| \phi w  f(\phi w ) \right|}{ s \log(s) }  \\
\phi w^{2} \lesssim 1: \; \left| F(\phi w) - \frac{\phi w f(\phi w)}{p+1} \right| \lesssim \phi^{\frac{p+1}{2}} \left(  1 + g(x) \right) \cdot
\end{array}
\label{Eqn:ErrorFPhi} 
\end{equation}
Hence

\begin{equation}
\begin{array}{l}
X \lesssim \frac{1}{s \log^{a+1}(s)}  \int_{B(O,1)} |w|^{p+1} g(\phi w)  \rho \; dy + e^{-s}  \cdot
\end{array}
\nonumber
\end{equation}
We also see from (\ref{Eqn:ApproxPot}) and the triangle inequality that

\begin{equation}
\begin{array}{l}
Y \lesssim \frac{1}{s \log^{a+1}(s)}  \int_{B(O,1)} |w|^{p+1} g(\phi w)  \rho \; dy  \cdot
\end{array}
\nonumber
\end{equation}
Hence, collecting all the estimates above we see that
(\ref{Eqn:EstDerivE}) holds. This concludes the proof of Lemma \ref{Lem:EstDerivE}.




\end{proof}

\subsection{Corrective term}
In this subsection, we introduce a corrective term which will be combined to the
functional  $\q E(w(s))$ \eqref{Eqn:Energyw} in the next subsection to
provide us with a Lyapunov functional for equation
\eqref{Eqn:Wavew}. More precisely, introducing
\begin{equation}
\begin{array}{l}
\mathcal{J}(w) : s \rightarrow \mathcal{J}(w(s)) := - \frac{1}{s \log(s)} \int_{B(O,1)} w(s) \partial_{s} w(s)  \rho \; dy,
\end{array}
\label{Eqn:DefJ}
\end{equation}
we have the following statement:
\begin{lem}[Upper bound on the derivative  of the corrective term]
Let $s \gg 1$. There exists a positive constant $C > 0$ such that

\begin{equation}
\begin{array}{ll}
\frac{d \mathcal{J}(w)}{ds} & \leq \frac{p+3}{2 s \log(s)} \mathcal{E}(w)  - \frac{p+7}{4 s \log(s)} \int_{B(O,1)} (\partial_{s} w)^{2} \rho \; dy \\
& - \frac{p+1}{2(p-1) s \log(s)} \int_{B(O,1)} w^{2} \rho \; dy  - \frac{p-1}{4s \log(s)} \int_{B(O,1)}  \left( |\nabla w|^{2} - (y \cdot \nabla w)^{2} \right) \rho \; dy \\
& - \frac{p-1} {2(p+1) s \log^{a+1}(s)} \int_{B(O,1)} |w|^{p+1} g(\phi w) \rho \; dy  + \Sigma, \; \text{with}
\end{array}
\label{Eqn:DerivJ}
\end{equation}

\begin{equation}
\begin{array}{l}
\Sigma \leq \frac{C}{s^{\frac{1}{2}}} \int_{B(O,1)} \frac{(\partial_{s} w)^{2}}{1-|y|^{2}} \rho \; dy
+ \frac{C}{s^{\frac{3}{2}} \log^{2}(s)} \int_{B(O,1)} w^{2} \rho \; dy \\
+ C \frac{1}{s^{2} (\log(s))^{a+2}} \int_{B(O,1)} |w|^{p+1} g(\phi w) \rho \; dy + C  e^{-s},
\end{array}
\nonumber
\end{equation}
where $\phi$ is introduced in \eqref{Eqn:Defphi}.
\label{Lem:EstDerivJ}
\end{lem}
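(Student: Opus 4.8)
The plan is to differentiate $\mathcal{J}(w(s))$ directly using the definition \eqref{Eqn:DefJ} and the equation \eqref{Eqn:Wavew}, and then to recognize the leading-order term as (a multiple of) the energy $\mathcal{E}(w(s))$ \eqref{Eqn:Energyw}, with all remaining contributions absorbed into the error term $\Sigma$. Concretely, write
\[
\frac{d \mathcal{J}(w)}{ds} = \frac{d}{ds}\left(-\frac{1}{s\log(s)}\right)\int_{B(O,1)} w\,\partial_s w\,\rho\,dy
- \frac{1}{s\log(s)}\int_{B(O,1)} (\partial_s w)^2 \rho\,dy
- \frac{1}{s\log(s)}\int_{B(O,1)} w\,\partial_s^2 w\,\rho\,dy.
\]
The first term is lower order: $\frac{d}{ds}(-\frac{1}{s\log(s)})= O\big(\frac{1}{s^2\log(s)}\big)$, and after a Cauchy--Schwarz / Young split (balancing $w\,\partial_s w$ against $\frac{(\partial_s w)^2}{1-|y|^2}$ weighted by $\rho$ and $w^2\rho$) it lands inside $\Sigma$. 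The real work is the last term: substitute $\partial_s^2 w$ from \eqref{Eqn:Wavew}, multiply by $-\frac{1}{s\log(s)} w \rho$, and integrate term by term over $B(O,1)$.

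The key step is to identify, among the eight resulting integrals, the "Hardy--type" combination
\[
\int_{B(O,1)} w\,\frac{1}{\rho}\dive\big(\rho\nabla w - \rho(y\cdot\nabla w)y\big)\rho\,dy
= -\int_{B(O,1)} \big(|\nabla w|^2 - (y\cdot\nabla w)^2\big)\rho\,dy
\]
(by integration by parts), together with the potential term $-\frac{2(p+1)}{(p-1)^2}\int w^2\rho$, the term $\gamma(s)\int w^2 \rho$ — where from \eqref{Eqn:ValGamma} we have $\gamma(s)=O\big(\frac{1}{s\log(s)}\big)$, so multiplying by $\frac{1}{s\log(s)}$ gives a contribution of size $\frac{1}{s^2\log^2(s)}\int w^2\rho$ that goes into $\Sigma$ — and the nonlinear term $-e^{-\frac{2sp}{p-1}}\log^{\frac{a}{p-1}}(s)\int w f(\phi w)\rho$, which after using $\phi w f(\phi w) = e^{\frac{2(p+1)s}{p-1}}\log^{-\frac{2a}{p-1}}(s)\,w f(\phi w)$ (see \eqref{Eqn:Defphi}) and the homogeneity-with-log relation between $F$ and $xf(x)$ rewrites as $-\frac{1}{s\log(s)}\,e^{-\frac{2(p+1)s}{p-1}}\log^{\frac{2a}{p-1}}(s)\int \phi w f(\phi w)\rho$. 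Now assemble: from $\mathcal{E}(w)$ \eqref{Eqn:Energyw} one has the algebraic identity
\[
\mathcal{E}(w) = \frac12\int(\partial_s w)^2\rho + \frac12\int\big(|\nabla w|^2-(y\cdot\nabla w)^2\big)\rho + \frac{p+1}{(p-1)^2}\int w^2\rho - e^{-\frac{2(p+1)s}{p-1}}\log^{\frac{2a}{p-1}}(s)\int F(\phi w)\rho,
\]
so that $\frac{p+3}{2s\log(s)}\mathcal{E}(w)$ matches the $(\partial_s w)^2$, gradient, $w^2$, and $F(\phi w)$ pieces up to carefully chosen coefficients; the leftover gradient, $w^2$, kinetic, and nonlinear pieces — with the coefficients $\frac{p-1}{4}$, $\frac{p+1}{2(p-1)}$, $\frac{p+7}{4}$, $\frac{p-1}{2(p+1)}$ as in \eqref{Eqn:DerivJ} — come out with a definite sign and are kept as genuine dissipative terms, while the replacement of $F(\phi w)$ by $\frac{\phi w f(\phi w)}{p+1}$ costs exactly an error controlled, via \eqref{Eqn:ErrorFPhi}, by $\frac{1}{s^2\log^{a+2}(s)}\int |w|^{p+1}g(\phi w)\rho + e^{-s}$.

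The remaining three integrals — the first-order term $\frac{2a}{(p-1)s\log(s)}\int w(y\cdot\nabla w)\rho$, the damping term $-\big(\frac{p+3}{p-1}-\frac{2a}{(p-1)s\log(s)}\big)\int w\,\partial_s w\,\rho$, and the mixed term $-2\int w(y\cdot\nabla\partial_s w)\rho$ — are all lower order: each carries either an explicit $\frac{1}{s\log(s)}$ prefactor before the extra $\frac{1}{s\log(s)}$ from $\mathcal{J}$, or contains a factor $\partial_s w$. For the damping and mixed terms I would integrate by parts in $y$ where a $\nabla\partial_s w$ appears (moving the derivative off $\partial_s w$, producing $\partial_s w\,\dive(\cdots)$ plus $\partial_s w$ times $w$-derivatives times $\rho$-weights), and then apply Young's inequality in the form $|ab|\le \varepsilon \frac{(\partial_s w)^2}{1-|y|^2}\rho + C_\varepsilon(\text{rest})\rho$, choosing $\varepsilon$ so small — here the paper's $\frac{C}{s^{1/2}}$ weight in $\Sigma$ signals that one picks $\varepsilon \sim s^{-1/2}$ or distributes a power of $s$ via $|ab|\le s^{1/2}a^2 + s^{-1/2}b^2$ — that the $\frac{(\partial_s w)^2}{1-|y|^2}$ contribution stays at size $\frac{C}{s^{1/2}}\int\frac{(\partial_s w)^2}{1-|y|^2}\rho$ and the $w$-side stays at $\frac{C}{s^{3/2}\log^2(s)}\int w^2\rho$. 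Collecting everything yields \eqref{Eqn:DerivJ}.

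\medskip

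\textbf{Main obstacle.} The delicate point is the bookkeeping of the weights of $s$ in the error term $\Sigma$: one must split each cross term $w\cdot(\text{term in }\partial_s w)$ and each $w\cdot(y\cdot\nabla\partial_s w)$ term by Young's inequality in a way that (i) never produces a full-strength $\int(\partial_s w)^2\rho$ term without a $\frac{1}{s\log(s)}$ (small enough to be beaten by the $-\frac{p+7}{4s\log(s)}\int(\partial_s w)^2\rho$ main term) or else only the degenerate-weight quantity $\int\frac{(\partial_s w)^2}{1-|y|^2}\rho$ (with a $\frac{C}{s^{1/2}}$ coefficient, acceptable since in Lemma \ref{Lem:EstDerivE} that quantity appears with a large negative coefficient $-\frac{3\alpha}{2}$), and (ii) keeps the $w^2$ remainder at the stated $\frac{C}{s^{3/2}\log^2(s)}$ level rather than the weaker $\frac{C}{s\log(s)}$ that a naive split would give — this is what forces the asymmetric choice of the Young parameter (a power of $s$ on one side). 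Getting all these exponents of $s$ and $\log s$ to line up simultaneously, while also matching the precise coefficients of the five kept terms to the combination $\frac{p+3}{2s\log(s)}\mathcal{E}(w)$, is the only real difficulty; the rest is the integration-by-parts bookkeeping already rehearsed in Lemma \ref{Lem:EstDerivE}.
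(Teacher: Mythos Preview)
Your approach is essentially the paper's own: differentiate $\mathcal{J}$, substitute \eqref{Eqn:Wavew} for $\partial_s^2 w$, integrate by parts on the divergence term and on $y\cdot\nabla\partial_s w$, use asymmetric Young splits with parameter $\sim s^{1/2}$ on the cross terms $w\,\partial_s w$, and reassemble into $\frac{p+3}{2s\log(s)}\mathcal{E}(w)$ via \eqref{Eqn:ErrorFPhi}. The decomposition, the coefficients, and the identification of the ``obstacle'' (balancing the Young parameter so that the $(\partial_s w)^2$ remainder carries only the degenerate weight $\frac{1}{1-|y|^2}$ at size $s^{-1/2}$) all match the paper.

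One ingredient is missing, however. After integrating $-\frac{2}{s\log(s)}\int w\,(y\cdot\nabla\partial_s w)\,\rho$ by parts in $y$ you obtain, besides a harmless $\int w\,\partial_s w\,\rho$ piece, the term
\[
\frac{2\alpha}{s\log(s)}\int_{B(O,1)} w\,\partial_s w\,\frac{|y|^2}{1-|y|^2}\,\rho\,dy.
\]
Any Young split of this integrand that puts $\frac{(\partial_s w)^2}{1-|y|^2}$ on one side necessarily leaves the singular weight $\frac{|y|^2}{1-|y|^2}$ on the $w^2$ side as well; you cannot land directly on $\int w^2\rho$ as your form ``$C_\varepsilon(\text{rest})\rho$'' suggests. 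The paper closes this with the Hardy-type inequality \eqref{Eqn:HardyType},
\[
\int_{B(O,1)} w^2\,\frac{|y|^2}{1-|y|^2}\,\rho\,dy \;\lesssim\; \int_{B(O,1)}|\nabla w|^2(1-|y|^2)\,\rho\,dy + \int_{B(O,1)} w^2\,\rho\,dy,
\]
after which the gradient contribution (of size $\frac{C}{s^{3/2}\log^2(s)}$) is absorbed into the main negative term $-\frac{p-1}{4s\log(s)}\int(|\nabla w|^2-(y\cdot\nabla w)^2)\rho$ for $s\gg 1$, since $(1-|y|^2)|\nabla w|^2\le |\nabla w|^2-(y\cdot\nabla w)^2$. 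With this one addition your sketch is complete and coincides with the paper's proof.
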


\begin{proof}

We have  $ \frac{d \mathcal{J}(w)}{ds}=  - \frac{1}{s \log(s)} \int_{B(O,1)} (\partial_{s} w)^{2} \rho \; dy
+ \Sigma_{1} + \Sigma_{2} $ with

\begin{equation}
\begin{array}{l}
\Sigma_{1} := - \frac{1}{s \log(s) }  \int_{B(O,1)} w \partial_{s}^{2} w \rho \; dy, \; \text{and} \\
\Sigma_{2} := \frac{1}{s^{2} \log(s)} \left( 1 + \frac{1}{\log(s)} \right) \int_{B(O,1)} w \partial_{s} w \rho \; dy \cdot
\end{array}
\nonumber
\end{equation}
We estimate $\Sigma_{2}$. The inequality $ab \leq \frac{a^{2}}{2} + \frac{b^{2}}{2}$ shows that

\begin{equation}
\begin{array}{l}
\Sigma_{2} \leq \frac{1}{2s^{2} \log(s)}  \int_{B(O,1)} w^{2} \rho \; dy + \frac{1}{2s^{2} \log(s)} \int_{B(0,1)} (\partial_{s} w)^{2} \rho \; dy \cdot
\end{array}
\nonumber
\end{equation}
We now estimate $\Sigma_{1}$. We see from (\ref{Eqn:Wavew}) that

\begin{equation}
\Sigma_{1} = I_{1}+...+ I_{6} ,
\end{equation}
with $I_{1} := -\frac{1}{s \log(s)} \int_{B(O,1)} \frac{1}{\rho} \dive \left( \rho \nabla w  - \rho (y \cdot \nabla w ) y \right) w \rho \; dy  $, \\
$ I_{2} := - \frac{2a}{(p-1)s^{2} \log^{2}(s)} \int_{B(O,1)}  y \cdot \nabla w w  \rho \; dy $,
$ I_{3} := \frac{1}{s \log(s)} \left( \frac{2(p+1)}{(p-1)^{2}}  - \gamma \right) \int_{B(O,1)}  w^{2}   \rho \; dy  $, \\
$ I_{4} :=  \frac{1}{s \log(s)} \left( \frac{p+3}{p-1} - \frac{2a}{(p-1)s \log(s)} \right) \int_{B(O,1)}  \partial_{s} w  w \rho \; dy $, \\
$ I_{5}  := \frac{2}{s \log(s)} \int_{B(O,1)} y \cdot \nabla \partial_{s} w  w \rho \; dy  $,  and\\
$ I_{6} := - \frac{1}{s \log(s)} e^{-\frac{2ps}{p-1}} \log^{\frac{a}{p-1}}(s) \int_{B(O,1)} f(\phi w) w \rho \; dy $. \\
An integration by parts shows that

\begin{equation}
\begin{array}{l}
I_{1} = \frac{1}{s \log(s)} \int_{B(O,1)} \left( |\nabla w|^{2} - |y \cdot \nabla w|^{2} \right) \rho \; dy \cdot
\end{array}
\nonumber
\end{equation}
Writing  $w\nabla w = \nabla \left( \frac{w^{2}}{2}\right) $ we have

\begin{equation}
\begin{array}{l}
I_{2} = \frac{a}{(p-1)s^{2} \log^{2}(s)} \left(  N \int_{B(O,1)} w^{2} \rho \; dy - \int_{B(O,1)} \frac{2 |y|^{2} \alpha}{1- |y|^{2}} w^{2} \rho \; dy \right)
\end{array}
\nonumber
\end{equation}
Hence $  | I_{2} | \lesssim  \frac{1}{s^{2}\log^{2}(s)} \int_{B(O,1)}
w^{2} \rho \; dy $. \\
Clearly, $|I_{3}| \leq \left( \frac{2(p+1)}{(p-1)^{2} s\log(s)} + O \left( \frac{1}{s^{2} \log^{2}(s)} \right) \right) \int_{B(O,1)} w^{2} \rho \; dy $. The inequality
$AB \leq \frac{A^{2}}{2} + \frac{B^{2}}{2} $
shows that

\begin{equation}
\begin{array}{l}
I_{4} \lesssim \frac{1}{s^{\frac{1}{2}}} \int_{B(O,1)} (\partial_{s} w)^{2} \rho \; dy
+ \frac{1}{s^{\frac{3}{2}} \log^{2}(s)} \int_{B(O,1)} w^{2} \rho \; dy \cdot
\end{array}
\nonumber
\end{equation}
From $\nabla \partial_{s} w  w = \frac{1}{2} \nabla \left( \partial_{s} (w^{2}) \right) $  we have

\begin{equation}
\begin{array}{ll}
I_{5} & = - \frac{2N}{s \log(s)} \int_{B(O,1)} w \partial_{s} w \rho \; dy  + \frac{2 \alpha}{s \log(s)} \int_{B(O,1)} w \partial_{s} w \frac{|y|^{2}}{1- |y|^{2}}
\rho \; dy \\
& = X + Y
\end{array}
\nonumber
\end{equation}
We have

\begin{equation}
\begin{array}{l}
X \lesssim \frac{1}{s^{\frac{3}{2}} \log^{2}(s)} \int_{B(O,1)} w^{2} \rho \; dy + \frac{1}{s^{\frac{1}{2}}} \int_{B(O,1)} (\partial_{s} w)^{2} \rho \; dy \cdot
\end{array}
\nonumber
\end{equation}
In order to estimate $Y$ we use the Hardy-type inequality  (see e.g \cite{MZajm03})
\begin{equation}
\begin{array}{l}
\int_{B(O,1)} w^{2} \frac{|y|^{2}}{1- |y|^{2}} \rho \; dy \lesssim \int_{B(O,1)} |\nabla w|^{2} (1-|y|^{2}) \rho \; dy
+ \int_{B(O,1)} w^{2} \rho \; dy
\end{array}
\label{Eqn:HardyType}
\end{equation}
to get

\begin{equation}
\begin{array}{l}
Y \lesssim \frac{1}{s^{\frac{3}{2}} \log^{2}(s)} \int_{B(O,1)} w^{2} \rho \; dy + \frac{1}{s^{\frac{1}{2}}} \int_{B(O,1)}
\frac{(\partial_{s} w)^{2}}{1- |y|^{2}} \rho \; dy \cdot
\end{array}
\nonumber
\end{equation}
We turn now our attention to $I_{6}$. We get from (\ref{Eqn:Defphi})

\begin{equation}
\begin{array}{ll}
I_{6} & =  - \frac{1}{s \log^{a+1}(s)} \int_{B(O,1)} |w|^{p+1} g(\phi w) \rho \; dy \cdot
\end{array}
\nonumber
\end{equation}
Hence we see from the estimates above and from (\ref{Eqn:ErrorFPhi})
that (\ref{Eqn:DerivJ}) holds. This concludes the proof of Lemma \ref{Lem:EstDerivJ}.

\end{proof}

\subsection{A new functional: decreasing and nonnegativity properties}
Using the previous subsections,
we introduce the following maps $\mathcal{N}_{m}(w)$
and $\mathcal{H}_{m}(w)$ (with $m \in \mathbb{R}^{+}$):
\begin{equation}
\begin{array}{ll}
\mathcal{N}_{m}(w): s \rightarrow \mathcal{N}_{m}(w(s)):=
  (\log(s))^{-\frac{m(p+3)}{2}} \mathcal{H}_{m}(w(s)) + m^{2} e^{-s},\\
\mathcal{H}_{m}(w) : s \rightarrow \mathcal{E}(w(s)) + m \mathcal{J}(w(s)) \cdot
\end{array}
\label{Eqn:DefHN}
\end{equation}
The lemma below shows that $ s \rightarrow \mathcal{N}_{m} (w(s))$ has a decreasing property (for a large $m$):
\begin{lem}[A nonnegative Lyapunov functional for equation
  \eqref{Eqn:Wavew}]\label{lemlyap}

There exist $m \gg 1$, $s_{0} \gg 1$, and $C > 0$ such that for all $s \geq s_{0} $ we have\\
\\
$(i)$:

\begin{equation}
\begin{array}{ll}
\mathcal{N}_{m}(w(s+1)) &- \mathcal{N}_{m}(w(s)) \leq - \frac{C}{s \log^{1+b}(s) } \int_{s}^{s+1} \int_{B(O,1)} \frac{(\partial_{\tau} w)^{2}}{1-|y|^{2}} \rho dy d \tau \\
& - \frac{C}{s \log^{1+b}(s)} \int_{s}^{s+1} \int_{B(O,1)} \left( |\nabla w|^{2} - |\nabla w \cdot y|^{2} \right) \rho  dy d \tau \\
& - \frac{C}{s \log^{a + 1 + b }(s)} \int_{s}^{s+1} \int_{B(O,1)} |w|^{p+1} \log^{a} \left( \log(10 + \phi^{2} w^{2}) \right) \rho  dy d \tau   \\
& - \frac{C}{\log^{1+b}(s)} \int_{s}^{s+1} \int_{B(O,1)} w^{2} \rho  dy, \; \text{where}
\end{array}
\label{Eqn:EstDecreaseAvN}
\end{equation}
$b := \frac{m(p+3)}{2}$. \\

$(ii)$:

\begin{equation}
\begin{array}{l}
\mathcal{N}_{m}(w(s)) \geq 0
\end{array}
\label{Eqn:EstPosN}
\end{equation}

\label{Lem:DecreaseN}
\end{lem}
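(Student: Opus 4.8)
\textbf{Plan of proof of Lemma \ref{Lem:DecreaseN}.}

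\medskip

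The plan is to combine the two differential inequalities from Lemmas \ref{Lem:EstDerivE} and \ref{Lem:EstDerivJ} with a suitably large coupling constant $m$, then pass from the pointwise (in $s$) differential inequality to the averaged statement \eqref{Eqn:EstDecreaseAvN}, and finally deduce nonnegativity by a blow-up criterion argument. For part $(i)$, I would first write $\frac{d\mathcal{H}_m(w)}{ds} = \frac{d\mathcal{E}(w)}{ds} + m\frac{d\mathcal{J}(w)}{ds}$ and plug in \eqref{Eqn:EstDerivE} and \eqref{Eqn:DerivJ}. The key algebraic observation is that the ``bad'' term $\frac{C}{s\log^{a+1}(s)}\int |w|^{p+1}g(\phi w)\rho\,dy$ coming from $\frac{d\mathcal{E}}{ds}$ is absorbed by the negative term $-\frac{m(p-1)}{2(p+1)s\log^{a+1}(s)}\int|w|^{p+1}g(\phi w)\rho\,dy$ from $m\frac{d\mathcal{J}}{ds}$ as soon as $m$ is chosen large enough; similarly, the dangerous dissipation-type terms $\frac{C}{s^{1/2}}\int\frac{(\partial_s w)^2}{1-|y|^2}\rho\,dy$ appearing in the $\Sigma$ of Lemma \ref{Lem:EstDerivJ} are controlled (after multiplying by $m$) by the leading negative term $-\frac{3\alpha}{2}\int\frac{(\partial_s w)^2}{1-|y|^2}\rho\,dy$ of Lemma \ref{Lem:EstDerivE}, again for $s\geq s_0$ large. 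The term $\frac{p+3}{2s\log(s)}\mathcal{E}(w)$ in \eqref{Eqn:DerivJ} is the reason the weight $(\log(s))^{-\frac{m(p+3)}{2}}$ is introduced: computing $\frac{d}{ds}\left((\log s)^{-b}\mathcal{H}_m(w)\right) = (\log s)^{-b}\frac{d\mathcal{H}_m}{ds} - \frac{b}{s\log s}(\log s)^{-b}\mathcal{H}_m(w)$ with $b=\frac{m(p+3)}{2}$, the extra $-\frac{b}{s\log s}(\log s)^{-b}\mathcal{H}_m$ exactly cancels (up to constants) the contribution $m\cdot\frac{p+3}{2s\log s}(\log s)^{-b}\mathcal{E}(w)$, once one also notices $\mathcal{H}_m = \mathcal{E}+m\mathcal{J}$ differs from $\mathcal{E}$ by a term that is lower order in $s$. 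The additive $m^2 e^{-s}$ in $\mathcal{N}_m$ is there to swallow the $Ce^{-s}$ error terms. After establishing the pointwise inequality $\frac{d}{ds}\mathcal{N}_m(w(s)) \leq -\frac{C}{s\log^{1+b}(s)}(\text{good terms})$, integrating over $[s,s+1]$ and using that $\tau\mapsto \tau\log^{1+b}(\tau)$, $\tau\mapsto\log^{a+1+b}(\tau)$, etc. vary slowly on unit intervals gives \eqref{Eqn:EstDecreaseAvN}.

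\medskip

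For part $(ii)$, the nonnegativity, I would argue by contradiction using a blow-up criterion in similarity variables, in the spirit of Merle--Zaag \cite{MZajm03} and Hamza--Zaag \cite{HZjmaa20}. Suppose $\mathcal{N}_m(w(s_*))<0$ for some $s_*\geq s_0$. By part $(i)$, $s\mapsto\mathcal{N}_m(w(s))$ is (essentially) decreasing, so $\mathcal{N}_m(w(s))\leq\mathcal{N}_m(w(s_*))=:-\eta<0$ for all $s\geq s_*$ along the integer translates, and in fact one gets $\mathcal{H}_m(w(s))\leq -\eta(\log s)^{b}$, hence $\mathcal{E}(w(s))\to-\infty$ at a controlled rate (after absorbing the lower-order $m\mathcal{J}$ term via an interpolation/Cauchy--Schwarz bound $|\mathcal{J}(w(s))|\lesssim\frac{1}{s\log s}(\|w\|_{L^2}^2+\|\partial_s w\|_{L^2}^2)$). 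A very negative value of $\mathcal{E}(w(s))$ forces, via the structure of \eqref{Eqn:Energyw}, that $\int e^{-\frac{2(p+1)s}{p-1}}(\log s)^{\frac{2a}{p-1}}F(\phi w)\rho\,dy$ is large, i.e. $\|w(s)\|_{L^{p+1}(B(O,1),\rho dy)}$ grows; feeding this back into the equation \eqref{Eqn:Wavew} (or into a differential inequality for $\int w^2\rho\,dy$ obtained by multiplying by $w\rho$) produces finite-time blow-up of $w$ in the variable $s$, contradicting the fact that $w$ is globally defined on $s\geq -\log T_0$ whenever $x_0$ is non-characteristic. I expect this blow-up criterion step to be the main obstacle: unlike the autonomous case \eqref{eqw}, here the nonautonomous coefficients $\gamma(s)$, the $\frac{1}{s\log s}$ drift terms, and especially the $\log\log$ factor hidden inside $f(\phi w)=|\phi w|^{p-1}\phi w\log^a(\log(10+\phi^2 w^2))$ must be tracked carefully so that the growth of $\phi(s)=e^{\frac{2s}{p-1}}(\log s)^{-\frac{a}{p-1}}$ does not destroy the argument — one has to check that $\log^a(\log(10+\phi^2 w^2))$ is, up to slowly varying factors, comparable to $g(\phi w)\sim(\log s)^a$ on the relevant region, which is where the elementary logarithm estimates and the already-established bounds \eqref{Eqn:ErrorFPhi} enter.

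\medskip

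In summary: (1) add the inequalities of Lemmas \ref{Lem:EstDerivE} and \ref{Lem:EstDerivJ}, choosing $m\gg1$ so that the negative $|w|^{p+1}g(\phi w)$ term dominates the positive one and so the negative $(\partial_s w)^2/(1-|y|^2)$ term dominates all $\Sigma$-contributions, and $s_0\gg1$ so that all $s^{-1/2}$, $s^{-1}$ factors are small; (2) compute $\frac{d}{ds}\mathcal{N}_m(w(s))$ and verify the weight $(\log s)^{-b}$ with $b=\frac{m(p+3)}{2}$ cancels the $\frac{p+3}{2s\log s}\mathcal{E}(w)$ term, while $m^2e^{-s}$ handles exponential errors, obtaining a pointwise decay inequality; (3) integrate on $[s,s+1]$ and use slow variation of the logarithmic weights to get \eqref{Eqn:EstDecreaseAvN}; (4) for \eqref{Eqn:EstPosN}, assume $\mathcal{N}_m(w(s_*))<0$, propagate it via $(i)$, deduce $\mathcal{E}(w(s))\to-\infty$, and derive blow-up of $w$ in finite $s$-time via a differential inequality for a suitable norm, contradicting global existence on the non-characteristic cone.
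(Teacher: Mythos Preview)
Your approach to part (i) is essentially what the paper does: combine \eqref{Eqn:EstDerivE} and \eqref{Eqn:DerivJ}, choose $m$ large so that the negative $|w|^{p+1}g(\phi w)$ term from $m\frac{d\mathcal J}{ds}$ absorbs the positive one from $\frac{d\mathcal E}{ds}$ and (for $s\ge s_0$ large enough) the $\frac{Cm}{s^{1/2}}$ dissipation term is absorbed by $-\frac{3\alpha}{2}\int\frac{(\partial_s w)^2}{1-|y|^2}\rho\,dy$, use the identity $\frac{d}{ds}\mathcal N_m=(\log s)^{-b}\bigl(\frac{d\mathcal H_m}{ds}-\frac{b}{s\log s}\mathcal H_m\bigr)-m^2e^{-s}$ together with the elementary bound $|\mathcal J(w(s))|\le\frac{1}{2s\log s}\bigl(\int w^2\rho\,dy+\int(\partial_s w)^2\rho\,dy\bigr)$, obtain the pointwise inequality, and integrate over $[s,s+1]$.

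For part (ii), however, the paper takes a genuinely different and shorter route than your blow-up criterion argument. Instead of showing that $\mathcal E(w(s))\to-\infty$ forces finite $s$-time blow-up of $w$, the paper uses a \emph{shift of vertex} trick. Assuming $\mathcal N_m(w(S_1))<0$, one considers the auxiliary similarity-variable solution $\tilde w^\delta:=w_{x,\,T^*(x)-\delta}$ for small $\delta>0$, which is related to $w$ by
\[
\tilde w^\delta(y,s)=\frac{\phi\bigl(-\log(\delta+e^{-s})\bigr)}{\phi(s)}\;w\!\left(\frac{y}{1+\delta e^{s}},\,-\log(\delta+e^{-s})\right).
\]
By continuity in $\delta$, $\mathcal N_m(\tilde w^\delta(S_1+1))<0$, and by part (i) the functional $s\mapsto\mathcal N_m(\tilde w^\delta(s))$ is decreasing. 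The key point is that as $s\to\infty$ the time argument $-\log(\delta+e^{-s})$ converges to the \emph{finite} value $-\log\delta$, so $w\bigl(\cdot,-\log(\delta+e^{-s})\bigr)$ stays in a fixed bounded set of $H^1(B(O,1))$. Since the potential part of $\mathcal N_m(\tilde w^\delta(s))$ carries the prefactor $e^{-\frac{2(p+1)s}{p-1}}(\log s)^{\frac{2a}{p-1}-b}$ and the kinetic part is nonnegative, one gets $\lim_{s\to\infty}\mathcal N_m(\tilde w^\delta(s))=0$, contradicting the strict negativity at $s=S_1+1$ together with the decreasing property.

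What each approach buys: your route is the classical Antonini--Merle one and would in principle work, but as you correctly anticipate, establishing a blow-up criterion for the nonautonomous equation \eqref{Eqn:Wavew} with its $\log\log$ factor requires real additional work (tracking $g(\phi w)$ on the large-$w$ region, deriving a genuine differential inequality for $\int w^2\rho\,dy$ that survives the $\gamma(s)$ and $\frac{1}{s\log s}$ drifts). The paper's $\delta$-shift argument sidesteps this entirely: it replaces the dynamical blow-up criterion by a soft limit computation, using only that $u$ is locally $H^1$ strictly before the blow-up time and that the potential term in $\mathcal N_m$ comes with an exponentially decaying prefactor.
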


\begin{proof}

(i) Let $s_{0} \gg 1$ be constant large enough such that all the
statements below are true. A computation shows that
\[
   \frac{d \mathcal{N}_{m}(w)}{ds} = (\log(s))^{-\frac{m(p+3)}{2}}
\left( \frac{d \mathcal{H}_{m}(w)}{ds} - \frac{m(p+3)}{2 s \log(s)}
  \mathcal{H}_{m}(w) \right) - m^{2} e^{-s}.
\]
Hence, by choosing $ m \gg 1$ large enough we
see from (\ref{Eqn:EstDerivE}), (\ref{Eqn:DerivJ}), and the estimate

\begin{equation}
\begin{array}{l}
\left| \mathcal{J}(w(s)) \right| \leq \frac{1}{2s \log(s)}
\left( \int_{B(O,1)} w^{2} \rho \; dy + \int_{B(O,1)} (\partial_{s} w)^{2} \rho \; dy \right)
\end{array}
\label{Eqn:EstJw}
\end{equation}
derived from $a b \leq \frac{a^{2}}{2} + \frac{b^{2}}{2}$ that there exists $ C > 0 $ such that

\begin{equation}
\begin{array}{ll}
\frac{d \mathcal{N}_{m}(w)}{ds} & \leq  - \frac{C}{\log^{b}(s)} \int_{B(0,1)} (\partial_{s} w)^{2} \; \rho  dy\\
&- \frac{C}{s \log^{1 + b }(s)} \int_{B(O,1)} \left( |\nabla w|^{2} - |\nabla w \cdot y|^{2} \right) \; \rho  dy \\
& - \frac{C}{s \log^{1+b}(s)} \int_{B(O,1)} w^{2} \rho \; dy - \frac{C}{s \log^{a+1 + b}(s)} \int_{B(O,1)} |w|^{p+1} g(\phi w) \; \rho  dy \cdot
\end{array}
\label{Eqn:EstDerivN}
\end{equation}
Hence integrating (\ref{Eqn:EstDerivN}) from $s$ to $s+1$ we get (\ref{Eqn:EstDecreaseAvN}). \\
(ii) In order to prove the second statement we argue by contradiction. Assume that there exists $S_{1} \geq s_{0} $ such that
$\mathcal{N}_{m} (w(S_{1})) < 0$. Let $ \delta > 0 $  be a constant so small that all the estimates below are true.
Let  $\tilde{w}^{\delta}(y,s) := w_{x, T^{*}(x) - \delta}(y,s)$. By (\ref{Eqn:SimilVar}) we have

\begin{equation}
\begin{array}{ll}
\tilde{w}^{\delta}(y,s) & := \frac{\phi \left(-\log(\delta + e^{-s}) \right) }{\phi(s)} w \left( \frac{y}{1 + \delta e^{s}}, -\log \left( \delta + e^{-s} \right) \right) \cdot
\end{array}
\label{Eqn:Reltildewandw}
\end{equation}
Here $ \phi(t) := \phi_{T^{*}(x) - \delta}(t)$.\\
Observe that $\tilde{w}^{\delta} $ is defined on $[S_{1} + 1, \infty)$, since
$- \log \left( \delta + e^{-( 1+S_{1} )} \right) \geq S_{1}$. Observe that

\begin{equation}
\begin{array}{l}
\mathcal{N}_{m} \left( \tilde{w}^{\delta}(1+ S_{1}) \right) < 0:
\end{array}
\label{Eqn:AssNNeg}
\end{equation}
this follows from the continuity of the map $ \delta^{'} \rightarrow \mathcal{N}_{m} \left( \tilde{w}^{\delta^{'}}(1+ S_{1}) \right)$ for $\delta^{'}$ positive and small enough. Since $ s \rightarrow N_{m} \left( \tilde{w}^{\delta}(s) \right) $ is decreasing for $s \geq S_{1} + 1$ we get

\begin{equation}
\begin{array}{l}
\lim \limits_{s \rightarrow \infty} \mathcal{N}_{m} \left( \tilde{w}^{\delta}(s) \right) \leq \mathcal{N}_{m} \left( \tilde{w}^{\delta}(1 + S_{1}) \right) \cdot
\end{array}
\nonumber
\end{equation}
We also see from the estimate $ab \leq \frac{a^{2}}{2} + \frac{b^{2}}{2}$ that

\begin{equation}
\begin{array}{l}
\int_{B(O,1)} \tilde{w}^{\delta} \partial_{s} \tilde{w}^{\delta} \; \rho  dy \lesssim  \int_{B(O,1)} ( \tilde{w}^{\delta})^{2} \; \rho  dy
+ \int_{B(O,1)} \left( \partial_{s} \tilde{w}^{\delta} \right)^{2} \; \rho dy \cdot
\end{array}
\label{Eqn:ContPerturbTildew}
\end{equation}
Hence

\begin{equation}
\begin{array}{ll}
\mathcal{N}_{m}  \left( \tilde{w}^{\delta}(s) \right) & \geq
- e^{-\frac{2(p+1)}{p-1} s} (\log(s))^{\frac{2a}{p-1}- b} \int_{B(O,1)} F(\phi \tilde{w}^{\delta}) \; \rho  dy
\end{array}
\nonumber
\end{equation}
We see from the boundedness of $F(\phi \tilde{w}^{\delta})$ on $ \{ \phi \tilde{w}^{\delta} \lesssim  1 \}$  and
$F(\phi \tilde{w}^{\delta}) \lesssim  | \phi \tilde{w}^{\delta}|^{(p+1)+} $ derived from (\ref{Eqn:ApproxPot}) on
$ \{ \phi \tilde{w}^{\delta} \gg 1 \} $, and (\ref{Eqn:Reltildewandw}), that there exists a large constant $C > 0$ such that

\begin{equation}
\begin{array}{ll}
\mathcal{N}_{m} \left( \tilde{w}^{\delta}(s) \right) & \geq - C \left(  1 + \delta e^{s} \right)^{N} e^{-\frac{2(p+1)s}{p-1}} (\log(s))^{\frac{2a}{p-1} - b}
\phi^{(p+1)+} \left( \log( \delta + e^{-s} ) \right) \times  \\
& \int_{B(O,1)} \left| w \left( z, -\log(\delta + e^{-s}) \right) \right|^{(p+1)+} \;  dy
 - C e^{-2s} \cdot
 \end{array}
\nonumber
\end{equation}
The estimate above and the embedding $ H^{1} \left( B(O,1) \right) \hookrightarrow  L^{(p+1)+}(B(O,1))  $ show that

\begin{equation}
\begin{array}{ll}
\lim \limits_{s \rightarrow \infty} \mathcal{N}_{m} \left( \tilde{w}^{\delta}(s) \right)  & = 0
\end{array}
\nonumber
\end{equation}
Given that the functional $s\mapsto \q N_m(\tilde w^\delta(s))$ is
decreasing by item (i), this is impossible, because of
(\ref{Eqn:AssNNeg}). Thus, a contradiction follows and item (ii)
follows. This concludes the proof of Lemma \ref{lemlyap}.
\end{proof}

\subsection{ Proposition \ref{Prop:BoundPolynws}: the proof}

In this subsection we prove Proposition \ref{Prop:BoundPolynws}. First we deduce from Lemma \ref{Lem:DecreaseN} a polynomial bound of averages (in space and in time) of $w$.

\begin{lem}
If  $s \geq s_{0}$  then the following holds:

\begin{equation}
\begin{array}{l}
0 \leq \mathcal{N}_{m} \left( w(s) \right) \leq \mathcal{N}_{m} \left( w(s_{0}) \right), \; \text{and}
\end{array}
\label{Eqn:EstDecN}
\end{equation}

\begin{equation}
\begin{array}{rl}
&\int_{s}^{s+1} \int_{B(O,1)} \left( |\nabla w|^{2} (1 - |y|^{2}) +  \frac{ \left( \partial_{\tau} w \right)^{2} }{1- |y|^{2}}
+ w^{2} \right)
  \rho \; dy \; d \tau\\
  \lesssim & \mathcal{N}_{m} \left( w(s_{0}) \right) \left( s \log^{1+b}(s) \right) \cdot
\end{array}
\label{Eqn:EstAverNablaW}
\end{equation}
Here $s_{0}$ is the number defined in Lemma \ref{Lem:DecreaseN}.

\label{Lem:BoundPoly}
\end{lem}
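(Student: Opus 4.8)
The plan is to read off both statements of Lemma~\ref{Lem:BoundPoly} directly from Lemma~\ref{Lem:DecreaseN}, by integrating the dissipation inequality already established for $\mathcal{N}_{m}(w)$ and discarding the non-negative terms that are not needed.

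For (\ref{Eqn:EstDecN}): the lower bound $\mathcal{N}_{m}(w(s))\ge 0$ is exactly (\ref{Eqn:EstPosN}). For the upper bound I would use the pointwise estimate (\ref{Eqn:EstDerivN}) obtained inside the proof of Lemma~\ref{Lem:DecreaseN}, whose right-hand side is a sum of non-positive terms; hence $s\mapsto \mathcal{N}_{m}(w(s))$ is non-increasing on $[s_{0},\infty)$, and integrating from $s_{0}$ to $s$ gives $\mathcal{N}_{m}(w(s))\le \mathcal{N}_{m}(w(s_{0}))$ for all $s\ge s_{0}$, which is (\ref{Eqn:EstDecN}).

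For (\ref{Eqn:EstAverNablaW}) I would apply (\ref{Eqn:EstDecreaseAvN}) at the point $s\ge s_{0}$. The term carrying $|w|^{p+1}\log^{a}(\log(10+\phi^{2}w^{2}))$ is non-negative, since $\log(10+\phi^{2}w^{2})>1$ forces $\log^{a}(\log(10+\phi^{2}w^{2}))>0$ for every $a\in\mathbb R$; dropping it and rearranging (\ref{Eqn:EstDecreaseAvN}) gives
\[
\frac{C}{s\log^{1+b}(s)}\int_{s}^{s+1}\!\!\int_{B(O,1)}\Big(\tfrac{(\partial_{\tau}w)^{2}}{1-|y|^{2}}+|\nabla w|^{2}-|\nabla w\cdot y|^{2}\Big)\rho\,dy\,d\tau+\frac{C}{\log^{1+b}(s)}\int_{s}^{s+1}\!\!\int_{B(O,1)}w^{2}\,\rho\,dy\,d\tau\le \mathcal{N}_{m}(w(s))-\mathcal{N}_{m}(w(s+1)).
\]
By (\ref{Eqn:EstDecN}) we have $\mathcal{N}_{m}(w(s))\le \mathcal{N}_{m}(w(s_{0}))$, and by (\ref{Eqn:EstPosN}) we have $\mathcal{N}_{m}(w(s+1))\ge 0$, so the right-hand side above is $\le \mathcal{N}_{m}(w(s_{0}))$. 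Clearing denominators and using $s\ge s_{0}\gg 1$, hence $\log^{1+b}(s)\le s\log^{1+b}(s)$, I obtain that each of the three time-space integrals is $\lesssim \mathcal{N}_{m}(w(s_{0}))\,s\log^{1+b}(s)$. Finally $|\nabla w\cdot y|^{2}\le |y|^{2}|\nabla w|^{2}$ by Cauchy--Schwarz, so $|\nabla w|^{2}(1-|y|^{2})\le |\nabla w|^{2}-|\nabla w\cdot y|^{2}$ pointwise; summing the three bounds after this replacement yields (\ref{Eqn:EstAverNablaW}).

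I expect no serious obstacle here: the lemma is pure bookkeeping on top of the Lyapunov structure of Lemma~\ref{Lem:DecreaseN}. The only two mild points of care are that the monotonicity of $s\mapsto\mathcal{N}_{m}(w(s))$ at \emph{non-integer} times must be taken from the pointwise derivative bound (\ref{Eqn:EstDerivN}) rather than from the one-unit-step inequality (\ref{Eqn:EstDecreaseAvN}) alone, and that one must keep track of which of the four dissipation weights in (\ref{Eqn:EstDecreaseAvN}) carries the extra factor $1/s$, so that after clearing denominators every term is controlled by the single quantity $\mathcal{N}_{m}(w(s_{0}))\,s\log^{1+b}(s)$.
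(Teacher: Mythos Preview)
Your proposal is correct and follows essentially the same route as the paper: the paper's proof is a two-line remark that (\ref{Eqn:EstDecN}) follows from (\ref{Eqn:EstDerivN}) and that (\ref{Eqn:EstDecreaseAvN}) together with (\ref{Eqn:EstDecN}) yields (\ref{Eqn:EstAverNablaW}), which is exactly what you unpack. Your careful tracking of the differing weights in (\ref{Eqn:EstDecreaseAvN}) and the pointwise inequality $|\nabla w|^{2}(1-|y|^{2})\le |\nabla w|^{2}-|\nabla w\cdot y|^{2}$ fills in the details the paper leaves implicit.
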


\begin{proof}
(\ref{Eqn:EstDecN}) follows from (\ref{Eqn:EstDerivN});
(\ref{Eqn:EstDecreaseAvN}) and (\ref{Eqn:EstDecN}) yield
(\ref{Eqn:EstAverNablaW}).

\end{proof}

We now prove Proposition \ref{Prop:BoundPolynws}. Recall from the
covering lemma (see e.g Proposition 3.4 page 1138 in \cite{MZimrn05}) that there exists
$C:= C(\delta_{0}) > 0 $ such that

\begin{equation}
\begin{array}{l}
\sup \limits_{ \left\{ x: \, |x -x_{0}| \leq
\frac{T_{0} - \frac{t_{0}(x_{0}) + T(x_{0})}{2}}{\delta_{0}} \right\} } \int_{B(O,1)} (\partial_{s} w(s))^{2} +
|\nabla w(s)|^{2} + w^{2}(s)  \; dy  \\
\leq C \sup \limits_{
\left\{ x: \, |x -x_{0}| \leq \frac{T_{0} - \frac{t_{0}(x_{0}) + T(x_{0})}{2}}{\delta_{0}}  \right\} }
\int_{B \left( 0, \frac{1}{2} \right)}  (\partial_{s} w(s))^{2} + |\nabla w(s)|^{2}  + w^{2}(s)  \; dy \cdot
\end{array}
\label{Eqn:CoverConseqGradw}
\end{equation}
Hence combining this estimate with Lemma \ref{Lem:BoundPoly} we see that (\ref{Eqn:EstKinetw}) holds. This proves Proposition \ref{Prop:BoundPolynws}.

\section{Some intermediate results}
\label{Sec:IntermResults}

In this section we prove some intermediate results (such as Corollary
\ref{Cor:BoundL0} ) which will be used in the next section when we prove Proposition \ref{Prop:BoundFinalWs}. To this end we use some delicate energy, interpolation, and nonlinear estimates. We first use the results of the previous section to derive a polynomial estimate of the $H^{1} \times  L^{2}$- norm of $ \left( w(s), \partial_{s}
w(s) \right)$; this estimate allows to yield a better estimate of the derivative of  $ \mathcal{E}(w) $; finally we design a new Lyapunov functional and we prove that it is bounded.

\subsection{Polynomial pointwise bound of $w(s)$}

First we prove a polynomial pointwise bound of $ \left\| ( w(s),\partial_{s} w(s) ) \right\|_{H^{1} (B(O,1)) \times L^{2} (B(O,1))}$ :

\begin{lem}


There exists $C > 0$  such that for all $s \gg 1$

\begin{equation}
\begin{array}{ll}
\| w(s) \|_{H^{1} \left( B(O,1) \right)} + \| \partial_{s} w(s) \|_{L^{2} \left( B(O,1) \right)} \lesssim  s^{C} \cdot
\end{array}
\label{Eqn:Bound1Ws}
\end{equation}

\label{Lem:PointwiseEstw}
\end{lem}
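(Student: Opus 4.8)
The plan is to upgrade the time-averaged bound \eqref{Eqn:EstAverNablaW} (equivalently \eqref{Eqn:EstKinetw}) to a genuine pointwise-in-$s$ bound, at the modest cost of replacing the power $s\log^{1+b}(s)$ by a larger polynomial power $s^C$. The natural device is a Gronwall-type differential inequality for the ``local energy''
\[
E_{\mathrm{loc}}(s):=\int_{B(O,1)}\bigl((\partial_s w)^2+|\nabla w|^2+w^2\bigr)\rho\,dy.
\]
First I would differentiate $E_{\mathrm{loc}}(s)$ in $s$ using equation \eqref{Eqn:Wavew}: multiply \eqref{Eqn:Wavew} by $\partial_s w\,\rho$, integrate over $B(O,1)$, and collect terms exactly as in the proof of Lemma \ref{Lem:EstDerivE}, but this time keeping all the terms rather than absorbing them into a Lyapunov-type monotonicity. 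The dangerous contributions are the linear term $\gamma(s)w\partial_s w$, the first-order terms $y\cdot\nabla w\,\partial_s w$ and $y\cdot\nabla\partial_s w\,\partial_s w$, and above all the nonlinear term $e^{-\frac{2sp}{p-1}}\log^{\frac a{p-1}}(s)f(\phi w)\partial_s w$, which after using \eqref{Eqn:Defphi} is of order $\int_{B(O,1)}|w|^{p}g(\phi w)|\partial_s w|\,\rho\,dy$. Every one of these can be bounded, via $ab\le \tfrac12 a^2+\tfrac12 b^2$ and the Hardy-type inequality \eqref{Eqn:HardyType}, by $C\bigl(E_{\mathrm{loc}}(s)+\int_{B(O,1)}|w|^{p+1}g(\phi w)\rho\,dy\bigr)+Ce^{-s}$, so that one obtains a differential inequality of the schematic form
\[
\frac{d}{ds}E_{\mathrm{loc}}(s)\ \le\ C\,E_{\mathrm{loc}}(s)+C\int_{B(O,1)}|w|^{p+1}g(\phi w)\,\rho\,dy+Ce^{-s}.
\]

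Second, the only term not already controlled by $E_{\mathrm{loc}}$ is the potential term $\int_{B(O,1)}|w|^{p+1}g(\phi w)\rho\,dy$, and here I would invoke the functional $\q E(w(s))$ \eqref{Eqn:Energyw} together with the Lyapunov functional $\q N_m(w(s))$: by Lemma \ref{lemlyap}(ii) and \eqref{Eqn:EstDecN} the quantity $\q N_m(w(s))$ is nonnegative and bounded above by $\q N_m(w(s_0))$, which forces $\q E(w(s))$ to be bounded above (after undoing the $(\log s)^{-m(p+3)/2}$ and $m\q J$ corrections, using \eqref{Eqn:EstJw} to control $|\q J(w(s))|$ by $E_{\mathrm{loc}}(s)$ divided by $s\log s$). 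Reading off the definition \eqref{Eqn:Energyw}, the boundedness of $\q E(w(s))$ gives
\[
e^{-\frac{2(p+1)s}{p-1}}(\log s)^{\frac{2a}{p-1}}\!\!\int_{B(O,1)}\!\!\!F(\phi w)\,\rho\,dy\ \le\ C+C\,E_{\mathrm{loc}}(s),
\]
and since $|x f(x)|\lesssim |x|^{p+1}g(x)\lesssim F(x)$ up to constants (using \eqref{Eqn:ApproxPot}-type estimates and $F(x)\asymp|x|^{p+1}(1+g(x))$ on $\{\phi w\gg1\}$, plus the harmless bounded region $\{\phi w\lesssim1\}$ contributing $O(e^{-s})$), the potential term $\int_{B(O,1)}|w|^{p+1}g(\phi w)\rho\,dy$ is itself bounded by $C\,E_{\mathrm{loc}}(s)+C+Ce^{-s}$. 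Plugging this back into the differential inequality yields $\frac{d}{ds}E_{\mathrm{loc}}(s)\le C E_{\mathrm{loc}}(s)+C$, hence $E_{\mathrm{loc}}(s)\lesssim e^{Cs}$ by Gronwall — but that is exponential, not polynomial, so this crude route is insufficient.

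To recover the polynomial bound $s^C$, I would instead iterate on unit intervals using the \emph{averaged} bound \eqref{Eqn:EstAverNablaW}: on each interval $[s,s+1]$ the time-average of $E_{\mathrm{loc}}$ is $\lesssim s\log^{1+b}(s)$, so there is a point $s_*\in[s,s+1]$ with $E_{\mathrm{loc}}(s_*)\lesssim s\log^{1+b}(s)$; then integrate the differential inequality $\frac{d}{ds}E_{\mathrm{loc}}\le C E_{\mathrm{loc}}+C$ only over the short interval $[s_*,s+1]\subset[s,s+2]$ of length $\le 2$, which produces merely a \emph{bounded} multiplicative factor $e^{2C}$, giving $\sup_{[s_*,s+1]}E_{\mathrm{loc}}\lesssim s\log^{1+b}(s)\lesssim s^{C}$. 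Covering $[s_0,\infty)$ by such overlapping unit intervals gives $E_{\mathrm{loc}}(s)\lesssim s^C$ for all $s\gg1$, which is precisely \eqref{Eqn:Bound1Ws}. The main obstacle is the bookkeeping of the nonlinear term: one must check carefully that the $\log\log$ factor $g(\phi w)$ really is dominated by the potential $F(\phi w)$ appearing in $\q E$ (so that the boundedness of $\q E$ closes the estimate) and that all the $s$-dependent coefficients $\gamma(s)$, $\tfrac{1}{s\log s}$, etc. in \eqref{Eqn:Wavew} are $O(1)$ — both facts hold for $s\gg1$, but they are where the $\log\log$ structure genuinely enters and must be tracked, just as in Lemmas \ref{Lem:EstDerivE} and \ref{Lem:EstDerivJ}.
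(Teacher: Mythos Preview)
There is a genuine gap in Step~1. The claim that the nonlinear contribution $\log^{-a}(s)\int_{B(O,1)}|w|^{p}g(\phi w)\,|\partial_s w|\,\rho\,dy$ can be bounded by $C\bigl(E_{\mathrm{loc}}(s)+\int|w|^{p+1}g(\phi w)\rho\,dy\bigr)$ via $ab\le\tfrac12 a^2+\tfrac12 b^2$ is false: that inequality produces $\int|w|^{2p}g^2\rho\,dy$, not $\int|w|^{p+1}g\,\rho\,dy$. No elementary splitting fixes this, because $|w|^{p}|\partial_s w|$ simply has one too many powers of $w$ relative to the potential. If you try to close by Sobolev, $\int|w|^{2p}\lesssim\|w\|_{H^1}^{2p}$ requires $2p\le 2^*$, which fails for $N\ge 4$ near the subconformal endpoint; and even when it holds, the resulting differential inequality becomes $E_{\mathrm{loc}}'\lesssim E_{\mathrm{loc}}+E_{\mathrm{loc}}^{p}$, whose solutions with initial size $\sim s$ blow up in time $\sim s^{1-p}\ll 1$, so your short-interval Gronwall trick cannot be run over $[s_*,s_*+1]$. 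Your Step~2 (potential $\lesssim E_{\mathrm{loc}}+C$ via $\mathcal N_m\ge 0$) is correct, but it does not rescue Step~1.

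The paper avoids differentiating $E_{\mathrm{loc}}$ altogether. Instead it first obtains a \emph{pointwise} bound $\int_{B(O,1)}|w(s)|^{q}\,dy\lesssim s^{C}$ for the specific exponent $q=\tfrac{2N}{N-1}$ (via the mean value theorem, the fundamental theorem of calculus on $\int|w|^{q}\rho$, and the space--time embedding $H^{1}\bigl((s,s+1)\times B(O,1)\bigr)\hookrightarrow L^{2(q-1)}$ together with \eqref{Eqn:EstKinetw}). Then Gagliardo--Nirenberg interpolates $\|w(s)\|_{L^{p+1+\bar\epsilon}}$ between this $L^{q}$ control and $\|w(s)\|_{H^1}$ with an $H^1$ exponent $\theta$ satisfying $\tfrac{p+1+\bar\epsilon}{2}\,\theta<1$ (this is exactly where subconformality $p<\tfrac{N+3}{N-1}$ enters). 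Feeding this into the Lyapunov upper bound \eqref{Eqn:EstDecN} through \eqref{Eqn:EstKinetPrelim}--\eqref{Eqn:PrelimBoundPot} yields a self-referential inequality for the weighted energy with a \emph{sublinear} power $\gamma<1$ on the unweighted $H^1$ norm; the covering lemma \eqref{Eqn:CoverConseqGradw} then removes the weight and the inequality closes to $s^{C}$. The missing idea in your plan is precisely this interpolation step that converts the superlinear nonlinear feedback into a sublinear one.
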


\begin{proof}

We see from (\ref{Eqn:CoverConseqGradw}) and Proposition \ref{Prop:BoundPolynws} that

\begin{equation}
\begin{array}{l}
\int_{s}^{s+1} \int_{B(O,1)} \left( ( \partial_{\tau} w )^{2} + |\nabla w |^{2} + w^{2} \right) \; \rho dy d \tau  \lesssim  s \log^{1+b}(s) \cdot
\end{array}
\label{Eqn:EstAverw}
\end{equation}
Let $ q  > 1 $ to be chosen later. The mean value theorem shows that there exists $ \sigma(s) \in [s,s+1] $ such that

\begin{equation}
\begin{array}{l}
\int_{B(O,1)}  \left| w (\sigma(s)) \right|^{q} \; \rho dy  =  \int_{s}^{s+1} \int_{B(O,1)}  | w |^{q} \;  dy d \tau
\end{array}
\nonumber
\end{equation}
Hence the combination of the equality above with the fundamental theorem of calculus shows that

\begin{equation}
\begin{array}{l}
\int_{B(O,1)} |w(s)|^{q} \; \rho dy =  \int_{s}^{s+1} \int_{B(O,1)}  | w |^{q} \;  dy d \tau +
\int_{\sigma(s)}^{s}  \frac{d}{d \tau} \int_{B(O,1)} |w(\tau)|^{q} \; \rho dy d \tau \cdot
\end{array}
\label{Eqn:EstLpw}
\end{equation}
We then claim that the following holds: \\
\\
\underline{Claim}: there exists $ C > 0 $ such that the following holds:

\begin{equation}
\begin{array}{l}
N =1, \infty > r \geq 2: \; \int_{B(O,1)} |w(s)|^{r} \; dy  \lesssim s^{C} \\
N > 1: \; \int_{B(O,1)} |w(s)|^{\frac{2N}{N-1}} \; dy  \lesssim s^{C} \cdot
\end{array}
\label{Eqn:LogControlBoundLpw}
\end{equation}

\begin{proof}

Assume that $N=1$ (resp. $N > 1$). Apply (\ref{Eqn:EstLpw}) with $ q = r $ (resp.  $q = \frac{2N}{N-1}$). In order to estimate the first term of the RHS of (\ref{Eqn:EstLpw}), we use the embedding $H^{1} \left( (s,s+1) \times B(O,1) \right) \hookrightarrow L^{r} $  (resp. $x^{q} \lesssim 1 + x^{2^{*}} $ for $x \geq 0$ ). In order to estimate the second term of the RHS of (\ref{Eqn:EstLpw}) we proceed as follows: we first use the composition rule for the derivative; then we apply the estimate $ a b  \leq \frac{a^{2}}{2} + \frac{b^{2}}{2}$ with $ (a,b):= \left( |w(y,\tau)|^{q-1}, |\partial_{\tau} w(y,\tau)| \right) $,  then we use the embedding
$ H^{1} \left( (s,s+1) \times B(O,1) \right) \hookrightarrow L^{2(q-1)} $. Hence using also (\ref{Eqn:EstAverw}) we see that (\ref{Eqn:LogControlBoundLpw}) holds.

\end{proof}

We then claim the following: \\
\\
\underline{Claim}: Let $N > 1$. Then there exists  $\bar{\epsilon} := \bar{\epsilon} (p) > 0 $ for which the following holds: there exist $ \left( \beta, \gamma, C \right) \in \mathbb{R}^{+} \times (0,1) \times \mathbb{R}^{+} $ such that

\begin{equation}
\begin{array}{ll}
p + 1 + \bar{\epsilon} \leq \frac{2N}{N-1}: & \int_{B(0,1)} |w(s)|^{p + 1 + \bar{\epsilon}} \; dy \lesssim s^{C}    \\
p + 1 + \bar{\epsilon} > \frac{2N}{N-1}: &  \int_{B(0,1)} |w(s)|^{p + 1 + \bar{\epsilon}} \; dy  \lesssim \\
&   \left( \int_{B(O,1)} |w(s)|^{\frac{2N}{N-1}} \; dy \right)^{\beta} \left( \int_{B(O,1)} |w(s)|^{2} + |\nabla w(s)|^{2} \; dy \right)^{\gamma}  \cdot
\end{array}
\label{Eqn:BoundLpEpsw}
\end{equation}

\begin{proof}

Indeed let $\bar{\epsilon}> 0$ be such that $ \frac{p + 1 + \bar{\epsilon}}{2} \theta < 1 $ with $\theta$ such that
$ \frac{1}{p + 1 + \bar{\epsilon}} = \frac{1- \theta}{\frac{2N}{N-1}} + \frac{\theta}{\frac{2N}{N-2}} $. Such a choice of $\bar{\epsilon}$
is always possible since $ p < 1 + \frac{4}{N-1} $. If  $ p + 1 + \bar{\epsilon} > \frac{2N}{N-1} $ then the Gagliardo-Nirenberg inequality yields

\begin{equation}
\begin{array}{l}
\| w(s) \|_{L^{p+1 + \bar{\epsilon}}(B(O,1))} \lesssim \| w(s) \|^{1- \theta}_{L^{\frac{2N}{N-1}} (B(O,1))}   \| w (s) \|^{\theta}_{H^{1} (B(O,1))}
\end{array}
\end{equation}
If $ p + 1 + \bar{\epsilon} \leq \frac{2N}{N-1} $ then the H\"older inequality applied to (\ref{Eqn:LogControlBoundLpw}) shows that
(\ref{Eqn:BoundLpEpsw}) holds.

\end{proof}

Elementary estimates show that

\begin{equation}
\begin{array}{l}
a > 0: \\
\begin{array}{ll}
g(\phi w(s)) & \lesssim \log^{a} \left( \log(\phi(s)) \right) \mathbbm{1}_{\{ \phi(s) \geq |w(s)| \}} +
\log^{a} \left( \log(|w(s)|) \right)  \mathbbm{1}_{ \{ |w(s)| \geq \phi(s) \} } \\
& \lesssim  \log^{a}(s) + |w(s)|^{\bar{\epsilon}}
\end{array}
\\
a < 0, \;  \phi w^{2}(s) \gg 1: \, g(\phi w(s)) \lesssim \log^{a} \left( \log(\phi(s)) \right) \\
\end{array}
\label{Eqn:ElemEstgw}
\end{equation}
Moreover we see from (\ref{Eqn:EstDecN})  and (\ref{Eqn:EstJw}) that there exists $C > 0$ such that

\begin{equation}
\begin{array}{l}
\int_{B(O,1)} \left( (\partial_{s} w(s))^{2} +  |\nabla w(s)|^{2} (1 -|y|^{2}) + w^{2}(s) \right) \; \rho  dy \\
\lesssim  e^{-\frac{2(p+1)s}{p-1}} (\log(s))^{\frac{2a}{p-1}} \int_{B(O,1)} F(\phi w(s)) \; \rho  dy  + O \left( (\log(s))^{C} \right) \cdot
\end{array}
\label{Eqn:EstKinetPrelim}
\end{equation}
We then estimate $ \int_{B(O,1)} \left( (\partial_{s} w(s))^{2} +  |\nabla w|^{2} (1 -|y|^{2}) + w^{2}(s) \right) \; \rho  dy $ . Using
(\ref{Eqn:Defphi}) and (\ref{Eqn:ApproxPot}) on $ \{ \phi w^{2}(s) \gg
1 \} $, then 
$ F(\phi w(s)) \lesssim  (\phi w(s))^{p+1} \left( g(\phi w(s)) + 1
\right)  $ on\\
$ \{ \phi w^{2}(s) \lesssim 1 \} \subset \{ w(s) \lesssim \phi^{-\frac{1}{2}}(s) \} $,    we get

\begin{equation}
\begin{array}{rl}
&e^{-\frac{2(p+1)s}{p-1}} (\log(s))^{\frac{2a}{p-1}} \int_{B(O,1)}
  F(\phi w(s)) \rho \; dy\\
  \lesssim
&\frac{1}{(\log(s))^{a}} \int_{ B(O,1) \cap \{ \phi w^{2}(s) \gg 1 \} } |w(y,s)|^{p+1} g (\phi w(s)) \; \rho dy + e^{-s}.
\end{array}
\label{Eqn:PrelimBoundPot}
\end{equation}
We see from (\ref{Eqn:LogControlBoundLpw}), (\ref{Eqn:BoundLpEpsw}), (\ref{Eqn:ElemEstgw}), and (\ref{Eqn:PrelimBoundPot}) that there exists $C > 0$ such that

\begin{equation}
\begin{array}{l}
\int_{B(O,1)} \left( (\partial_{s} w)^{2}(s) +  |\nabla w(s)|^{2} (1 - |y|^{2}) + w^{2}(s) \right) \; \rho  dy \\
\leq   s^{C} \left( 1 +  \left( \int_{B(O,1)}  (\partial_{s} w)^{2}(s) + |\nabla w|^{2}(s) + w^{2}(s) \; dy \right)^{\gamma} \right) \cdot
\end{array}
\label{Eqn:BoundGradw}
\end{equation}
Hence, combining (\ref{Eqn:CoverConseqGradw}) with (\ref{Eqn:BoundGradw}) we see that there exists $ C := C(\delta_{0}) > 0 $ such that

\begin{equation}
\begin{array}{l}
\sup \limits_{ \left\{ x: \, |x-x_{0}| \leq \frac{T_{0} -
\frac{T(x_{0})+t_{0}(x_{0})}{2}} {\delta_{0}} \right\}  } \int_{B(O,1)}  (\partial_{s} w(s))^{2} +
|\nabla w(s)|^{2} + w^{2}(s)  \; dy  \leq  s^{C} \cdot
\end{array}
\label{Eqn:LastStepBoundH1Normw}
\end{equation}
Hence there exists $ C > 0 $ such that (\ref{Eqn:Bound1Ws}) holds. \\

\end{proof}

\subsection{Improved estimate of derivative of $\mathcal{E}(w)$}

The lemma below yields a better estimate of the derivative of $\mathcal{E}(w)$  than (\ref{Eqn:EstDerivE}):

\begin{lem}

  There exists a constant $C > 0$ such that for all $ s \gg 1  $  we have:

\begin{equation}
\begin{array}{ll}
\frac{d \mathcal{E}(w)}{ds} & \leq  - \frac{3 \alpha}{2} \int_{B(O,1)} \frac{(\partial_{s} w)^{2}}{1- |y|^{2}} \rho \; dy \\
+&  \frac{C \log \left( \log( \log(s) ) \right)}{s \log^{a+2}(s)} \int_{B(O,1)} |w|^{p+1} g(\phi w) \rho \; dy + \frac{C}{s^{2}} \int_{B(O,1)} |\nabla w|^{2}(1- |y|^{2}) \rho \; dy \\
+& \frac{C}{s^{2}} \int_{B(O,1)} w^{2} \rho \; dy + \frac{C}{s^{\frac{7}{4}}} \cdot
\end{array}
\label{Eqn:SndDerivE}
\end{equation}

\label{Lem:SndDerivE}
\end{lem}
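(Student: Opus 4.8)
The plan is to revisit the proof of Lemma \ref{Lem:EstDerivE}, keeping track of the terms more carefully now that we have the polynomial pointwise bound \eqref{Eqn:Bound1Ws} at our disposal. Recall that in Lemma \ref{Lem:EstDerivE} the only term that prevented us from absorbing the energy dissipation into the negative term $-\frac{3\alpha}{2}\int (\partial_s w)^2\frac{\rho}{1-|y|^2}$ was the ``potential'' contribution coming from $X$ and $Y$ in \eqref{Eqn:DefXY}, which we estimated crudely by $\frac{C}{s\log^{a+1}(s)}\int|w|^{p+1}g(\phi w)\rho\,dy$. The point of the present lemma is that, using a sharper Taylor expansion of $F(\phi w)-\frac{\phi w f(\phi w)}{p+1}$ for large argument (the $\log\log$ structure of $g$ produces, after one differentiation, a gain of a full power of $\log$ together with the extra factor $\log(\log(\log(s)))$, cf.\ the elementary estimates already invoked around \eqref{Eqn:ErrorFPhi} and \eqref{Eqn:ApproxPot}), one upgrades that bound to $\frac{C\log(\log(\log(s)))}{s\log^{a+2}(s)}\int|w|^{p+1}g(\phi w)\rho\,dy$, i.e.\ with an extra $\log(s)$ in the denominator.

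The concrete steps would be: (1) Start again from the identity $\Sigma_l=\Sigma_{r,1}+\dots+\Sigma_{r,8}$ obtained by multiplying \eqref{Eqn:Wavew} by $\partial_s w\,\rho$ and integrating; the terms $\Sigma_{r,1},\Sigma_{r,2},\Sigma_{r,4}$ are again exact time derivatives contributing to $\frac{d\mathcal{E}(w)}{ds}$, and $\Sigma_{r,7}$ again produces the crucial $-2\alpha\int\frac{(\partial_s w)^2}{1-|y|^2}\rho\,dy$. (2) Re-examine $\Sigma_{r,3}$ and $\Sigma_{r,5}$: these were previously bounded with a $\frac{1}{\log^2(s)}\int\frac{(\partial_s w)^2}{1-|y|^2}\rho\,dy$ piece; since $\frac{1}{\log^2 s}\ll \alpha/2$ for $s\gg1$, they are harmlessly absorbed, while their $w$-part and $\nabla w$-part go into the $\frac{C}{s^2}$ terms in \eqref{Eqn:SndDerivE}. (3) Re-examine $\Sigma_{r,6}$: the $-\frac{p+3}{p-1}\int(\partial_s w)^2\rho$ cancels with $\Sigma_{r,7}$'s first term, and the $O(\frac{1}{s\log s})\int(\partial_s w)^2\rho\,dy$ leftover is, via the Hardy-type bound, dominated by $\frac{1}{1-|y|^2}$ and absorbed. (4) The main work is on $\Sigma_{r,8}=I+J$: as in Lemma \ref{Lem:EstDerivE} one extracts the exact time derivative $\frac{d}{ds}\big(\int e^{-\frac{2(p+1)s}{p-1}}\log^{\frac{2a}{p-1}}(s)F(\phi w)\rho\,dy\big)$ into $\frac{d\mathcal{E}(w)}{ds}$, leaving $X+Y$ as in \eqref{Eqn:DefXY}. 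Now, instead of the estimate \eqref{Eqn:ErrorFPhi}, one pushes the expansion one order further: on $\{\phi w^2\gg1\}$ one has $F(\phi w)-\frac{\phi w f(\phi w)}{p+1}=O\!\left(\frac{|\phi w f(\phi w)|}{s\log^2 s}\log(\log(\log s))\right)$ (the first correction being $O(\frac{1}{s\log s})$, but that leading correction is itself of the form $\phi w f(\phi w)$ times a slowly varying factor, and one more differentiation-type identity trades it again for the genuine error of size $\frac{\log\log\log s}{s\log^2 s}$), while on $\{\phi w^2\lesssim1\}$ one retains the $e^{-s}$-type bound. Substituting into $X$ and $Y$ and using $e^{-\frac{2(p+1)s}{p-1}}\log^{\frac{2a}{p-1}}(s)(\phi)^{p+1}\approx\log^{-a}(s)$ yields the claimed $\frac{C\log(\log(\log s))}{s\log^{a+2}(s)}\int|w|^{p+1}g(\phi w)\rho\,dy$ and $\frac{C}{s^{7/4}}$ (the latter absorbing the $e^{-s}$ and any $s^{-2}\log^C s$ remainders coming from steps (2)–(3) after the pointwise bound \eqref{Eqn:Bound1Ws} is used to control $\int w^2\rho\,dy$ and $\int|\nabla w|^2(1-|y|^2)\rho\,dy$; here one notes that these two integrals were already shown to be at most polynomial, but actually to get the clean $s^{-7/4}$ one keeps them as the two explicit $\frac{C}{s^2}\int\cdots$ terms in \eqref{Eqn:SndDerivE} rather than bounding them further).

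The hard part will be the sharp bookkeeping of the expansion of $F(x)-\frac{xf(x)}{p+1}$ with $f(x)=|x|^{p-1}x\,g(x)$ and $g(x)=\log^a(\log(10+x^2))$: one must verify that writing $F(x)=\int_0^x f$, the leading behaviour is $\frac{xf(x)}{p+1}$ up to a relative error that, when $x=\phi w$ with $\phi=e^{\frac{2s}{p-1}}\log^{-\frac{a}{p-1}}(s)$, is genuinely of size $\frac{\log\log\log s}{s\log s}$ rather than merely $\frac{1}{s\log s}$ — this requires one more integration-by-parts/cancellation step exploiting that $g'(x)/g(x)\sim \frac{2}{x\log(x^2)\log\log(x^2)}\cdot a$ decays like $\frac{1}{x(\log x)(\log\log x)}$, so on the relevant range $\log x\approx s$, $\log\log x\approx \log s$, $\log\log\log x\approx\log\log s$, giving the extra $\log\log\log s$ and the extra power of $\log s$. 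Once this analytic lemma on $F$ is in hand, the rest is the same term-by-term absorption as in Lemma \ref{Lem:EstDerivE}, using $s\gg1$ so that all the $\log$-negative prefactors beat the constant $\alpha/2$, and using \eqref{Eqn:HardyType} to convert any stray $\int\frac{(\partial_s w)^2}{1-|y|^2}$-free kinetic remainder into something controllable.
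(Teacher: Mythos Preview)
Your overall structure (revisit Lemma~\ref{Lem:EstDerivE}, keep $\Sigma_{r,1}$--$\Sigma_{r,7}$ as before, sharpen the treatment of $X+Y$) is right, but step~(4) misidentifies the mechanism behind the gain and misses a key ingredient.

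\textbf{The gain is not a pointwise estimate on $F$.} You claim that on $\{\phi w^2\gg 1\}$ one has $F(\phi w)-\frac{\phi w f(\phi w)}{p+1}=O\bigl(\frac{|\phi w f(\phi w)|\log\log\log s}{s\log^2 s}\bigr)$. This is false: by \eqref{Eqn:DefF1}--\eqref{Eqn:DecompF} and \eqref{Eqn:ApproxPot}, the difference equals $F_1(\phi w)+F_2(\phi w)$ with $|F_1(\phi w)|\sim\frac{|\phi w f(\phi w)|}{\log(10+\phi^2 w^2)\log\log(10+\phi^2 w^2)}$, which on $\{\phi w^2\gg 1\}$ is only $O\bigl(\frac{|\phi w f(\phi w)|}{s\log s}\bigr)$---no better. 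The extra factor of $\log s$ in the denominator does \emph{not} come from pushing the expansion of $F$ further; it comes from a cancellation between the $F_1$-part of $X$ and the leading $-\frac{\phi w f(\phi w)}{2}$ part of $Y$. Concretely, the paper rewrites $X+Y=\chi_1+\chi_2$ where
\[
\chi_1(s)=\frac{a}{(p+1)s\log^{a+1}(s)}\int_{B(O,1)}|w|^{p+1}g(\phi w)\left(1-\frac{4s\log s}{(p-1)\log\log(10+\phi^2 w^2)\log(10+\phi^2 w^2)}\right)\rho\,dy,
\]
and the bracket is small because $(p-1)\log\log(10+\phi^2 w^2)\log(10+\phi^2 w^2)=4s\log s+O(s\log\log\log s)+O\bigl(\log s\cdot\log(10+w^2)\bigr)+O\bigl(\frac{\log^2(10+w^2)}{s}\bigr)$. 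Your ``one more differentiation-type identity'' does not capture this: it is an algebraic cancellation in the specific combination $X+Y$, not an identity about $F$ alone.

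\textbf{The polynomial bound \eqref{Eqn:Bound1Ws} is needed elsewhere.} The expansion above produces $w$-dependent error terms $\int|w|^{p+1}g(\phi w)\log^q(10+w^2)\rho\,dy$ for $q=1,2$, which are not bounded by constants. The paper controls them via an interpolation argument (the Claim around \eqref{Eqn:BoundPotwLogCorr}) that uses \eqref{Eqn:Bound1Ws} to show these integrals are $\lesssim s^{1/8}+\int|w|^{p+1}g(\phi w)\rho\,dy$; the $s^{1/8}$ piece, divided by the $s^2\log^{a+1}(s)$ prefactor, is what produces the $C s^{-7/4}$ in \eqref{Eqn:SndDerivE}. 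You invoke \eqref{Eqn:Bound1Ws} only to bound $\int w^2\rho$ and $\int|\nabla w|^2(1-|y|^2)\rho$---but those are kept explicit in the statement, so that is not where it is used.
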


\begin{proof}

We already know from the proof of Lemma \ref{Lem:EstDerivE} that there exists a constant $C > 0$ such that

\begin{equation}
\begin{array}{ll}
\frac{d \mathcal{E}(w)}{ds} & \leq  - \frac{3 \alpha}{2} \int_{B(O,1)} \frac{(\partial_{s} w)^{2}}{1- |y|^{2}} \rho \; dy
+ \frac{C}{s^{2}} \int_{B(O,1)} |\nabla w|^{2}(1- |y|^{2}) \rho \; dy \\
& + \frac{C}{s^{2}} \int_{B(O,1)} w^{2} \rho \; dy + C e^{-s} + X + Y,
\end{array}
\nonumber
\end{equation}
with $X$ and $Y$ defined in (\ref{Eqn:DefXY}). We have $ X + Y  = \chi_{1}(s) + \chi_{2}(s) $ with

\[
  \begin{array}{rl}
    \chi_{1}(s) := \frac{a}{(p+1) s \log^{a+1}(s) }  \int_{B(O,1)}
  &|w|^{p+1} \log^{a} \left( \log ( 10 + \phi^{2} w^{2} ) \right)\\
&\times    \left( 1 -  \frac{4 s \log(s)}{ (p-1) \log \left( \log ( 10 + \phi^{2} w^{2} ) \right) \log ( 10+ \phi^{2} w^{2}) }   \right) \rho \; dy,
  \end{array}
  \]
  and
  \[
\begin{array}{rl}
    \chi_{2}(s)   := &
  \frac{2 e^{-\frac{2(p+1)s}{p-1}}}{p-1} \log^{\frac{2a}{p-1}}(s)\\
  &\times\int_{B(O,1)}
\left( (p+1) F_{2}(\phi w) - \frac{a}{s \log(s)} F_{1}(\phi w) - \frac{a}{s \log(s)} F_{2}(\phi w) \right) \rho \; dy \cdot
\end{array}
\]
We divide the region of integration into two regions:\\
$ X_{<} := \left\{ y \in B(O,1): \; \phi(s) w^{2}(y,s) \lesssim 1  \right\} $, and
$ X_{>} := \left\{ y \in B(O,1): \; \phi(s) w^{2}(y,s) \gg 1  \right\} $. \\
\\
We first estimate $\chi_{1}(s)$. \\
\\
Let

\begin{equation}
\begin{array}{ll}
\chi_{1,<}(s) & := \frac{a}{(p+1) s \log^{a+1}(s)} \times \\
& \int_{X_{<}}  |w|^{p+1} \log^{a} \left( \log ( 10 + \phi^{2} w^{2} ) \right)
\left( 1 -  \frac{4 s \log(s)}{ (p-1) \log \left( \log ( 10 + \phi^{2} w^{2} ) \right) \log ( 10+ \phi^{2} w^{2}) }   \right) \rho \; dy, \; \text{and} \\
& \\
\chi_{1,>}(s) & :=  \frac{a}{(p+1) s \log^{a+1}(s)}  \times \\
& \int_{X_{>}} |w|^{p+1} \log^{a} \left( \log ( 10 + \phi^{2} w^{2} ) \right) \left( 1 -  \frac{4 s \log(s)}{ (p-1) \log \left( \log ( 10 + \phi^{2} w^{2} ) \right) \log ( 10+ \phi^{2} w^{2}) }   \right) \rho \; dy \cdot
\end{array}
\nonumber
\end{equation}

We first control $\chi_{1,<}(s)$. Clearly

\begin{equation}
\begin{array}{l}
|w|^{p+1} \log^{a} \left( \log (  10 +  \phi^{2} w^{2}) \right) \lesssim \phi^{-\frac{p+1}{2}} \log^{a} \left( \log (  10 +  \phi) \right)
\lesssim e^{-s} \cdot
\end{array}
\label{Eqn:Pointwisew}
\end{equation}
Hence taking into account that $  1 -  \frac{4 s \log(s)}{ (p-1) \log \left( \log ( 10 + \phi^{2} w^{2} ) \right) \log ( 10+ \phi^{2} w^{2}) } \lesssim 1 $ we get

\begin{equation}
\begin{array}{l}
\chi_{1,<}(s) \lesssim e^{-s} \cdot
\end{array}
\nonumber
\end{equation}

We then control $\chi_{1,>}(s)$. We first observe that

\begin{equation}
\begin{array}{l}
\log (10 + \phi^{2} w^{2}) \gtrsim \log(\phi) \gtrsim s, \; \text{and} \\
\log \left( \log (10 + \phi^{2} w^{2}) \right) \gtrsim  \log \left( \log(\phi) \right) \gtrsim \log(s) \cdot
\end{array}
\label{Eqn:LowerBoundDen}
\end{equation}
We have

\begin{equation}
\begin{array}{l}
\log(10 + \phi^{2} w^{2}) = \log(\phi^{2})  +  \log (10 \phi^{-1} + w^{2} ), \; \text{and} \\
\log  \left( \log(10 + \phi^{2} w^{2} ) \right) =  \log \left( \log( \phi^{2} ) \right) + \log \left( 1 + \frac{\log \left(w^{2} + 10 \phi^{-2} \right)}{\log(\phi^{2})} \right) \cdot
\end{array}
\nonumber
\end{equation}
Also $ \log(\phi^{2}) \log \left( \log( \phi^{2} ) \right) = \frac{4s}{p-1} \log(s) + O \left( s \log ( \log (\log(s)) ) \right) $. Hence
$(p-1) \log \left(  \log( 10 + \phi^{2} w^{2} ) \right) \log (10 + \phi^{2} w^{2})  - 4s \log(s) =  X  + Y  + Z + O \left( s \log \left( \log \left( \log(s) \right) \right) \right) $
with

\begin{equation}
\begin{array}{l}
X := (p-1) \log ( w^{2} + 10 \phi^{-2}) \log \left( \log(\phi^{2}) \right), \\
Y  := (p-1) \log(\phi^{2}) \log \left( 1 + \frac{\log (10 \phi^{-2} +  w^{2})}{\log(\phi^{2})} \right), \; \text{and} \\
Z:= (p-1) \log ( w^{2} + 10 \phi^{-2} ) \log \left( 1 + \frac{ \log(w^{2} + 10 \phi^{-2})}{\log(\phi^{2})} \right) \cdot
\end{array}
\nonumber
\end{equation}
Hence using also the elementary inequality $ \log(1 + x) \lesssim  x $ for $x \geq 0$ we get

\begin{equation}
\begin{array}{l}
(p-1) \log \left(  \log( 10 + \phi^{2} w^{2} ) \right) \log (10 + \phi^{2}  w^{2})  - 4s \log(s)  \\
= O \left( s \log ( \log (\log(s)) ) \right) + O \left( \log(s) \log(w^{2} +  10 \phi^{-2} ) \right) \\
+ O \left( \frac{\log^{2}(w^{2} + 10 \phi^{-2})}{s} \right)  \cdot
\end{array}
\nonumber
\end{equation}
Next we claim that the following holds: \\
\\
\underline{Claim}: Let $q \in \{ 1,2 \}$. Then there exists $C > 0 $ such that

\begin{equation}
\begin{array}{l}
\int_{B(O,1)} |w(s)|^{p+1} g(\phi w(s)) \log^{q} \left( 10 + w^{2}(s) \right) \; \rho dy \lesssim  s^{\frac{1}{8}}
+ \int_{B(O,1)} |w(s)|^{p+1} g(\phi w(s)) \; \rho dy
\end{array}
\label{Eqn:BoundPotwLogCorr}
\end{equation}

\begin{proof}

Let $ 0 < \epsilon \ll 1$ and $ 0 < \epsilon^{'} \ll \epsilon $ such that all the statements in the proof are correct. From
$ \log (10 + x^{2}) \lesssim 1 + |x|^{\epsilon^{'}}$ we get

\begin{equation}
\begin{array}{l}
\int_{B(O,1)} |w(s)|^{p+1} g( \phi w(s) ) \log^{q} \left( 10 + w^{2}(s) \right)  \; \rho dy \\
\lesssim \int_{B(O,1)} |w(s)|^{p+1} g( \phi w(s) ) \; \rho dy + \int_{B(O,1)} |w(s)|^{p + 1 + \epsilon'} g(\phi w(s)) \; \rho dy
\end{array}
\nonumber
\end{equation}
Moreover the interpolation inequality w.r.t the measure $ d \mu  := g (\phi w(s)) \rho dy $  shows that there exists $ 0 <  \theta \ll 1 $ such that

\begin{equation}
\begin{array}{l}
\int_{B(O,1)} |w(s)|^{p + 1 + \epsilon'} g(\phi w(s)) \; \rho dy \\
\lesssim   \left( \int_{B(O,1)} |w(s)|^{p+1} g(\phi w(s)) \; \rho dy \right)^{1- \theta}  \left( \int_{B(O,1)} |w(s)|^{p+1 + \epsilon} g(\phi w(s)) \; \rho dy \right)^{\theta}
\end{array}
\nonumber
\end{equation}
We then estimate $g(\phi w)$. If $ a > 0 $ then $ g( \phi w (s) ) \lesssim  \log^{a}(s) + |w(s)|^{\epsilon} $  by proceeding similarly as in (\ref{Eqn:ElemEstgw}).
If $a < 0$ then we clearly have $g(\phi w(s)) \lesssim 1 $. Hence using also the embeddings $ H^{1} ( B(O,1) ) \hookrightarrow L^{p + 1 + \epsilon} (B(O,1)) $
and $ H^{1} ( B(O,1) ) \hookrightarrow L^{p + 1 + 2 \epsilon} (B(O,1)) $, and (\ref{Eqn:Bound1Ws}) we see that there exists $C > 0$ such that

\begin{equation}
\begin{array}{l}
\int_{B(O,1)} |w(s)|^{p+1 + \epsilon} g(\phi w(s)) \; \rho dy \lesssim s^{C} \cdot
\end{array}
\label{Eqn:Boundpowerwplusg}
\end{equation}
Hence we get (\ref{Eqn:BoundPotwLogCorr})  from $s^{C} = s^{C- \frac{1}{8}} s^{\frac{1}{8}} $  (with $C$ defined in (\ref{Eqn:Boundpowerwplusg})) and $ u^{\theta} v^{1 - \theta} \leq \theta u + (1 - \theta) v $.

\end{proof}
Hence by combining this claim with the estimates above we see that
$\chi_{1}(s)$ is bounded by the RHS of (\ref{Eqn:SndDerivE}).

We then estimate $\chi_{2}(s)$.  To this end we write $\chi_{2}(s) = \chi_{2, <}(s) + \chi_{2, >}(s) $ with

\begin{equation}
\begin{array}{l}
\chi_{2, <}(s) :=  \frac{2 e^{-\frac{2(p+1)s}{p-1}}}{p-1} \log^{\frac{2a}{p-1}}(s) \int_{X_{<}}
\left( (p+1) F_{2}(\phi w) - \frac{a}{s \log(s)} F_{1}(\phi w) - \frac{a}{s \log(s)} F_{2}(\phi w) \right) \rho \; dy, \; \text{and} \\
\chi_{2, >}(s) := \frac{2 e^{-\frac{2(p+1)s}{p-1}}}{p-1} \log^{\frac{2a}{p-1}}(s) \int_{X_{>}}
\left( (p+1) F_{2}(\phi w) - \frac{a}{s \log(s)} F_{1}(\phi w) - \frac{a}{s \log(s)} F_{2}(\phi w) \right) \rho \; dy \cdot
\end{array}
\nonumber
\end{equation}
We first control $\chi_{2,<}(s)$. By proceeding similarly as in (\ref{Eqn:ErrorFPhi}) we get

\begin{equation}
\begin{array}{ll}
\chi_{2,<}(s) \lesssim e^{-s} \cdot
\end{array}
\nonumber
\end{equation}

We then control $\chi_{2, >}(s)$. We see from  (\ref{Eqn:DefF1}), (\ref{Eqn:ApproxPot}), and (\ref{Eqn:LowerBoundDen}) that

\begin{equation}
\begin{array}{l}
\chi_{2,>}(s) \lesssim \frac{1}{\log^{a+1}(s) s^{2}} \int_{B(O,1)} |w|^{p+1} g(\phi w) \; \rho  dy \cdot
\end{array}
\nonumber
\end{equation}
Hence $\chi_{2}(s)$ is bounded by the RHS of (\ref{Eqn:SndDerivE}).

\end{proof}

\subsection{Decreasing property of a Lyapunov functional}

Let $u $ be a solution of \eqref{equ}-\eqref{pert2}.
We define $\mathcal{L}_{0}(w)$ in the following fashion:

\begin{equation}
\mathcal{L}_{0}(w): s \rightarrow \mathcal{L}_{0}(w(s)) := \mathcal{E}(w(s))  - \frac{1}{s \log^{\frac{3}{2}}(s)}
\int_{B(O,1)} w(s) \partial_{s} w(s) \rho \; dy \cdot
\label{Eqn:DefL0}
\end{equation}
We prove the following lemma:

\begin{lem}

There exists a constant $ C > 0 $ such that for all $ s \gg 1 $ we have

\begin{equation}
\begin{array}{ll}
\frac{d \mathcal{L}_{0}(w)}{ds} & \leq - \alpha  \int_{B(O,1)}  \frac{(\partial_{s} w)^{2}}{1-|y|^{2}} \rho \; dy
+ \frac{C}{s \log^{\frac{3}{2}}(s)} \mathcal{L}_{0} (w) +  \frac {C}{s^{\frac{7}{4}}} \cdot
\end{array}
\label{Eqn:L0Bound}
\end{equation}

\label{Lem:DerivL0}
\end{lem}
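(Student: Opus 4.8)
The plan is to read $\mathcal{L}_0$ as a perturbation of $\mathcal{E}$ by a corrective term of the same type as $\mathcal{J}$ \eqref{Eqn:DefJ}, and then recycle the derivative estimates already established. Write $\mathcal{L}_0(w(s)) = \mathcal{E}(w(s)) + \mathcal{J}_0(w(s))$ with $\mathcal{J}_0(w(s)) := -\frac{1}{s\log^{\frac32}(s)}\int_{B(O,1)} w\,\partial_s w\,\rho\,dy$, and note that $\mathcal{J}_0 = \frac{1}{\log^{\frac12}(s)}\mathcal{J}$, so that
\[
\frac{d\mathcal{L}_0(w)}{ds} = \frac{d\mathcal{E}(w)}{ds} + \frac{1}{\log^{\frac12}(s)}\frac{d\mathcal{J}(w)}{ds} - \frac{1}{2s\log^{\frac32}(s)}\mathcal{J}(w(s)).
\]
(One could instead recompute $\frac{d\mathcal{J}_0}{ds}$ from scratch, mimicking the proof of Lemma \ref{Lem:EstDerivJ}; invoking that lemma is shorter.) I would then substitute the \emph{improved} estimate \eqref{Eqn:SndDerivE} for $\frac{d\mathcal{E}}{ds}$, estimate \eqref{Eqn:DerivJ} for $\frac{d\mathcal{J}}{ds}$, and the bound \eqref{Eqn:EstJw} for $|\mathcal{J}(w(s))|$, and reorganise the outcome for $s$ large.

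The leading contribution obtained this way is $\frac{p+3}{2s\log^{\frac32}(s)}\mathcal{E}(w)$ (the first term on the right of \eqref{Eqn:DerivJ} after multiplication by $\log^{-\frac12}(s)$). Since $\mathcal{E}(w) = \mathcal{L}_0(w) - \mathcal{J}_0(w)$ and $|\mathcal{J}_0(w)| \lesssim \frac{1}{s\log^{\frac32}(s)}\bigl(\int_{B(O,1)} w^2\rho\,dy + \int_{B(O,1)}(\partial_s w)^2\rho\,dy\bigr)$ by \eqref{Eqn:EstJw}, this term equals $\frac{p+3}{2s\log^{\frac32}(s)}\mathcal{L}_0(w)$ up to a remainder bounded by $\frac{C}{s^2\log^3(s)}\bigl(\int_{B(O,1)} w^2\rho\,dy + \int_{B(O,1)}(\partial_s w)^2\rho\,dy\bigr)$, which produces the $\frac{C}{s\log^{\frac32}(s)}\mathcal{L}_0(w)$ of \eqref{Eqn:L0Bound}. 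Every remaining term is, up to a scalar, one of the nonnegative quantities $\int\frac{(\partial_s w)^2}{1-|y|^2}\rho\,dy$, $\int(|\nabla w|^2-(y\cdot\nabla w)^2)\rho\,dy$, $\int w^2\rho\,dy$, $\int|w|^{p+1}g(\phi w)\rho\,dy$, or is $O(e^{-s})$ or $O(s^{-7/4})$; I would absorb them one at a time, discarding the unneeded negative terms of \eqref{Eqn:DerivJ}. Concretely, the coercive term $-\frac{3\alpha}{2}\int\frac{(\partial_s w)^2}{1-|y|^2}\rho\,dy$ of \eqref{Eqn:SndDerivE} leaves a factor $\frac{\alpha}{2}$ to spare, enough to swallow all the $o(1)$ multiples of $\int\frac{(\partial_s w)^2}{1-|y|^2}\rho\,dy$ (the worst being $\frac{C}{s^{1/2}\log^{1/2}(s)}\int\frac{(\partial_s w)^2}{1-|y|^2}\rho\,dy$ from $\log^{-\frac12}(s)$ times the $\Sigma$ of \eqref{Eqn:DerivJ}, after $\int(\partial_s w)^2\rho\,dy\le\int\frac{(\partial_s w)^2}{1-|y|^2}\rho\,dy$), leaving exactly $-\alpha\int\frac{(\partial_s w)^2}{1-|y|^2}\rho\,dy$; the bad term $\frac{C}{s^2}\int|\nabla w|^2(1-|y|^2)\rho\,dy$ of \eqref{Eqn:SndDerivE} is $\le\frac{C}{s^2}\int(|\nabla w|^2-(y\cdot\nabla w)^2)\rho\,dy$ by Cauchy–Schwarz and is dominated by $-\frac{p-1}{4s\log^{\frac32}(s)}\int(|\nabla w|^2-(y\cdot\nabla w)^2)\rho\,dy$; the various $w^2$-type errors (coefficients $\frac{C}{s^2}$, $\frac{C}{s^{3/2}\log^{5/2}(s)}$, $\frac{C}{s^2\log^3(s)}$, $\frac{C}{s^2\log^{5/2}(s)}$) are $o\bigl(\frac1{s\log^{\frac32}(s)}\bigr)$ and are dominated by $-\frac{p+1}{2(p-1)s\log^{\frac32}(s)}\int w^2\rho\,dy$; and the surviving $O(e^{-s})$ and $O(s^{-7/4})$ terms collapse into the final $\frac{C}{s^{7/4}}$.

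The one genuinely tight point — and the reason $\mathcal{L}_0$ is built with the exponent $\frac32$ — concerns the potential term. After multiplication by $\log^{-\frac12}(s)$, \eqref{Eqn:DerivJ} delivers the negative term $-\frac{p-1}{2(p+1)s\log^{a+\frac32}(s)}\int|w|^{p+1}g(\phi w)\rho\,dy$, which must absorb the positive term $\frac{C\log(\log(\log(s)))}{s\log^{a+2}(s)}\int|w|^{p+1}g(\phi w)\rho\,dy$ coming from \eqref{Eqn:SndDerivE}; this is possible precisely because $\log(\log(\log(s)))=o(\log^{1/2}(s))$. This is exactly where the improved derivative estimate \eqref{Eqn:SndDerivE} is indispensable: the cruder bound \eqref{Eqn:EstDerivE}, whose potential term carries the larger coefficient $\frac{C}{s\log^{a+1}(s)}$, could not be controlled this way, while an exponent $\ge 2$ in $\mathcal{J}_0$ would yield no gain over the $\frac1{s\log^{a+2}(s)}$ scale (and an exponent $\le 1$ would later fail the integrability needed to close a Gronwall-type argument). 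Collecting all these absorptions, valid for $s$ large enough, yields \eqref{Eqn:L0Bound} and proves Lemma \ref{Lem:DerivL0}.
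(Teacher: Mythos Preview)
Your proposal is correct and follows essentially the same route as the paper: you write $\mathcal{L}_0 = \mathcal{E} + \log^{-1/2}(s)\,\mathcal{J}$, differentiate, invoke the improved estimate \eqref{Eqn:SndDerivE} for $d\mathcal{E}/ds$, \eqref{Eqn:DerivJ} for $d\mathcal{J}/ds$, and \eqref{Eqn:EstJw} to replace $\mathcal{E}$ by $\mathcal{L}_0$, exactly as the paper does. Your version is considerably more detailed than the paper's terse ``combine these estimates'', and in particular your explanation of why the exponent $\tfrac32$ is the right choice (so that the potential term from \eqref{Eqn:SndDerivE} with coefficient $\frac{\log\log\log s}{s\log^{a+2}(s)}$ is absorbed by the negative $\frac{1}{s\log^{a+3/2}(s)}$ term) is a welcome clarification.
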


\begin{proof}

Observe that $ \mathcal{L}_{0} (w) = \mathcal{E}(w) + \frac{1}{\log^{\frac{1}{2}}(s)} \mathcal{J} (w) $. Hence

\begin{equation}
\begin{array}{l}
\frac{d \mathcal{L}_{0}(w)}{ds} = \frac{d \mathcal{E}(w)}{ds} - \frac{1}{2 s \log^{\frac{3}{2}}(s)} \mathcal{J}(w)
+ \frac{1}{\log^{\frac{1}{2}}(s)} \frac{d \mathcal{J}(w)}{ds}
\end{array}
\label{Eqn:DerivL0}
\end{equation}
We see from (\ref{Eqn:EstJw}) that $ \mathcal{E}(w) \approx \mathcal{L}_{0}(w) $. This estimate combined with (\ref{Eqn:DerivL0}), (\ref{Eqn:SndDerivE}), (\ref{Eqn:DerivJ}),
and (\ref{Eqn:EstJw}) yields (\ref{Eqn:L0Bound}).

\end{proof}

Let $C$ be the constant defined in Lemma \ref{Lem:DerivL0}. Let $m \in \mathbb{R}^{+}$. We define  the following map:

\begin{equation}
\tilde{\mathcal{L}}_{m}(w):
s \rightarrow \tilde{\mathcal{L}}_{m}(w(s)) :=  e^{\frac{2 C}{\log^{\frac{1}{2}}(s)}} \mathcal{L}_{0} (w(s))  + \frac{m}{s^{\frac{1}{2}}}  \cdot
\label{Eqn:DefL}
\end{equation}
This map has a decreasing property for $m \gg 1$

\begin{lem}

There exist $m \gg 1$ and a constant $ \bar{C} > 0 $ such that for all $s_{2} \geq s_{1} \gg 1 $

\begin{equation}
\begin{array}{l}
\tilde{\mathcal{L}}_{m}(w(s_{2})) - \tilde{\mathcal{L}}_{m} (w(s_{1})) \leq
- \bar{C} \int_{s_{1}}^{s_{2}} \int_{B(O,1)} \frac{(\partial_{\tau} w)^{2}}{1 - |y|^{2}} \rho \; dy \; ds
\end{array}
\label{Eqn:DecL}
\end{equation}

\label{Lem:DecL}
\end{lem}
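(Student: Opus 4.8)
The plan is to show that the corrective factor $e^{2C/\log^{1/2}(s)}$ in the definition \eqref{Eqn:DefL} of $\tilde{\q L}_m(w)$ is precisely engineered to absorb the ``bad'' term $\frac{C}{s\log^{3/2}(s)}\q L_0(w)$ appearing in \eqref{Eqn:L0Bound}, while the remaining positive term $\frac{C}{s^{7/4}}$ is killed by the extra summand $\frac{m}{s^{1/2}}$ for $m$ large. First I would compute $\frac{d}{ds}\tilde{\q L}_m(w)$ by the product rule: the derivative of $e^{2C/\log^{1/2}(s)}$ contributes a factor $e^{2C/\log^{1/2}(s)}\cdot\left(-\frac{C}{s\log^{3/2}(s)}\right)\q L_0(w)$ (using $\frac{d}{ds}\log^{-1/2}(s) = -\frac{1}{2s\log^{3/2}(s)}$), which exactly cancels the middle term of \eqref{Eqn:L0Bound} once that estimate is multiplied through by $e^{2C/\log^{1/2}(s)}$. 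What survives is
\begin{equation}
\frac{d\tilde{\q L}_m(w)}{ds} \le -\alpha\, e^{2C/\log^{1/2}(s)} \int_{B(O,1)} \frac{(\partial_s w)^2}{1-|y|^2}\,\rho\, dy + \frac{C\, e^{2C/\log^{1/2}(s)}}{s^{7/4}} - \frac{m}{2 s^{3/2}}.
\nonumber
\end{equation}
Since $e^{2C/\log^{1/2}(s)}\to 1$ as $s\to\infty$, for $s\gg 1$ we have $e^{2C/\log^{1/2}(s)}\le 2$, so the last two terms are bounded by $\frac{2C}{s^{7/4}} - \frac{m}{2s^{3/2}}$, which is $\le 0$ for $m$ large enough and $s\gg 1$ (indeed $\frac{2C}{s^{7/4}} = o(s^{-3/2})$). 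Likewise $-\alpha e^{2C/\log^{1/2}(s)} \le -\frac{\alpha}{2}=:-\bar C$ as a coefficient in front of the (nonnegative) kinetic integral, for $s\gg 1$.

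Having established a pointwise-in-$s$ differential inequality $\frac{d\tilde{\q L}_m(w)}{ds} \le -\bar C \int_{B(O,1)} \frac{(\partial_s w)^2}{1-|y|^2}\,\rho\,dy$ valid for all $s\ge s_1$ with $s_1\gg 1$, I would then integrate it between $s_1$ and $s_2$ to obtain \eqref{Eqn:DecL} directly by the fundamental theorem of calculus. One should be slightly careful about the regularity of $s\mapsto \tilde{\q L}_m(w(s))$: differentiating $\q E(w)$ and $\q J(w)$ termwise requires enough smoothness of $w$, but this is the same level of regularity already used implicitly in the proofs of Lemmas \ref{Lem:EstDerivE}, \ref{Lem:EstDerivJ} and \ref{Lem:SndDerivE}, so it can be invoked without further comment (or handled, as is standard in this literature, by a density/approximation argument reducing to smooth solutions).

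The main obstacle is purely bookkeeping: making sure the constants line up, i.e. that the $C$ appearing inside the exponential $e^{2C/\log^{1/2}(s)}$ is \emph{the same} $C$ as the one multiplying $\frac{1}{s\log^{3/2}(s)}\q L_0(w)$ in \eqref{Eqn:L0Bound} (this is why the statement of Lemma \ref{Lem:DerivL0} fixes ``$C$ the constant defined in Lemma \ref{Lem:DerivL0}'' and \eqref{Eqn:DefL} reuses it), and that the sign of the surviving lower-order terms is genuinely negative for the chosen $m$ and for $s$ past some threshold — which forces $s_1 \gg 1$ in the statement. No delicate analysis is needed beyond \eqref{Eqn:L0Bound}; the lemma is essentially the assertion that $\q L_0$ multiplied by the integrating factor $e^{2C/\log^{1/2}(s)}$, plus a decaying correction, is monotone.
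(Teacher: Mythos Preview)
Your proof is correct and follows exactly the same approach as the paper: compute $f'(s)=-\frac{C}{s\log^{3/2}(s)}f(s)$ for $f(s)=e^{2C/\log^{1/2}(s)}$, observe that this cancels the $\frac{C}{s\log^{3/2}(s)}\q L_0(w)$ term from \eqref{Eqn:L0Bound}, and then choose $m$ large to absorb the $s^{-7/4}$ remainder into $-\frac{m}{2}s^{-3/2}$. Your write-up is in fact slightly more detailed than the paper's (you make the bounds $e^{2C/\log^{1/2}(s)}\le 2$ and $\bar C=\alpha/2$ explicit, and you flag the regularity and constant-matching issues), but the argument is the same.
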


\begin{proof}

Observe that $f(s) := e^{\frac{2 C}{ \log^{\frac{1}{2}}(s)}}$ satisfies $f^{'}(s) = -\frac{C}{s \log^{\frac{3}{2}}(s)} f(s) $. Hence
by using the operation rules of the derivative we get

\begin{equation}
\begin{array}{l}
\frac{d \tilde{\mathcal{L}}_{m}(w)}{d s} \leq - \alpha e^{\frac{2 C}{ \log^{\frac{1}{2}}(s)}}
 \int_{B(O,1)} \frac{(\partial_{s} w)^{2}}{1 - |y|^{2}} \rho \; dy  + C  e^{\frac{2 C}{ \log^{\frac{1}{2}}(s)}}
\frac{1}{s^{\frac{7}{4}}} - \frac{m}{2} \frac{1}{s^{\frac{3}{2}}}
\end{array}
\nonumber
\end{equation}
Hence we see that (\ref{Eqn:DecL}) holds.

\end{proof}

\subsection{Bounds}

In the corollary below we derive some bounds from Lemma \ref{Lem:DecL}. We have

\begin{cor}

There exists a constant $ C > 0 $ and $ s_{0} \gg 1$ such that
for all $s \geq s_{0} $ we have

\begin{equation}
\begin{array}{l}
-C \leq \mathcal{L}_{0}(w(s)) \leq C \left( \mathcal{L}_{0}(w(s_{0})) + 1 \right) \cdot
\end{array}
\label{Eqn:BoundL0}
\end{equation}
Moreover for all $s_{2} \geq s_{1} \geq s_{0}$ we have

\begin{equation}
\begin{array}{l}
\int_{s_{1}}^{s_{2}} \int_{B(O,1)} (\partial_{\tau} w)^{2} \frac{1}{1- |y|^{2}} \rho dy ds \lesssim  \mathcal{L}_{0}(w(s_{0})) + 1
\end{array}
\label{Eqn:BoundDerivw}
\end{equation}

\label{Cor:BoundL0}
\end{cor}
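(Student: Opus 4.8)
The idea is to read everything off the monotonicity (\ref{Eqn:DecL}) of $\tilde{\mathcal{L}}_m$ established in Lemma \ref{Lem:DecL}, once a lower bound on $\mathcal{L}_0$ is in hand. First I would record the elementary comparison between $\tilde{\mathcal{L}}_m(w(s))$ and $\mathcal{L}_0(w(s))$ coming from (\ref{Eqn:DefL}): for $s\ge s_0\gg 1$ the factor $e^{\frac{2C}{\log^{1/2}(s)}}$ is non-increasing in $s$ and lies in $[1,C_1]$ with $C_1:=e^{\frac{2C}{\log^{1/2}(s_0)}}$, while $\frac{m}{s^{1/2}}\in[0,\frac{m}{s_0^{1/2}}]$. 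Hence $\mathcal{L}_0(w(s))\le\tilde{\mathcal{L}}_m(w(s))$ whenever $\mathcal{L}_0(w(s))\ge 0$, and in all cases $\mathcal{L}_0(w(s))$ and $\tilde{\mathcal{L}}_m(w(s))$ agree up to a multiplicative constant in $[1,C_1]$ and an additive term in $[0,\frac{m}{s_0^{1/2}}]$; in particular $\mathcal{L}_0$ is bounded above (resp.\ below) on $[s_0,\infty)$ iff $\tilde{\mathcal{L}}_m$ is, with comparable constants.

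\textbf{Upper bound and (\ref{Eqn:BoundDerivw}), granting $\mathcal{L}_0\ge -C$.} By (\ref{Eqn:DecL}) with $s_1=s_0$ we get $\tilde{\mathcal{L}}_m(w(s))\le\tilde{\mathcal{L}}_m(w(s_0))\lesssim\mathcal{L}_0(w(s_0))+1$ for all $s\ge s_0$, and the comparison above turns this into the upper bound $\mathcal{L}_0(w(s))\le C(\mathcal{L}_0(w(s_0))+1)$ in (\ref{Eqn:BoundL0}). For (\ref{Eqn:BoundDerivw}) I would use (\ref{Eqn:DecL}) on $[s_1,s_2]$ with $s_2\ge s_1\ge s_0$: its left-hand side is bounded by $\frac{1}{\bar C}\big(\tilde{\mathcal{L}}_m(w(s_1))-\tilde{\mathcal{L}}_m(w(s_2))\big)$; then bound $\tilde{\mathcal{L}}_m(w(s_1))\le\tilde{\mathcal{L}}_m(w(s_0))\lesssim\mathcal{L}_0(w(s_0))+1$ from above by monotonicity, and $\tilde{\mathcal{L}}_m(w(s_2))\ge -C'$ from below using $\mathcal{L}_0(w(s_2))\ge -C$.

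\textbf{The lower bound $\mathcal{L}_0(w(s))\ge -C$ --- the main obstacle.} Since $\mathcal{L}_0$ is built from $\mathcal{E}(w)$ (\ref{Eqn:Energyw}) and the sign-indefinite correction $\mathcal{J}(w)$ (\ref{Eqn:DefJ}), and its ``positive part'' is the nonlinear potential with the wrong sign, I do not expect a naive transfer of the nonnegativity of $\mathcal{N}_m$ from Lemma \ref{lemlyap}: writing $\mathcal{L}_0=\big(1-\frac{1}{m\log^{1/2}(s)}\big)\mathcal{E}+\frac{1}{m\log^{1/2}(s)}\mathcal{H}_m$ and using (\ref{Eqn:EstJw}) reduces the lower bound to controlling $\int_{B(O,1)}(w^2+(\partial_s w)^2)\rho\,dy$ by $o(s\log s)$, which the merely polynomial estimate of Lemma \ref{Lem:PointwiseEstw} does not provide. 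Instead I would argue by contradiction, following the scheme of the proof of Lemma \ref{lemlyap}(ii): if $\tilde{\mathcal{L}}_m(w(S_1))<-C$ for some $S_1\ge s_0$ and $C$ large, then by continuity of $\delta'\mapsto\tilde{\mathcal{L}}_m(\tilde{w}^{\delta'}(1+S_1))$ one gets $\tilde{\mathcal{L}}_m(\tilde{w}^{\delta}(1+S_1))<-\frac{C}{2}<0$ for $\delta>0$ small, where $\tilde{w}^{\delta}=w_{x,T^{*}(x)-\delta}$ is the rescaled solution (\ref{Eqn:Reltildewandw}). As $\tilde{w}^{\delta}$ solves (\ref{Eqn:Wavew}), Lemma \ref{Lem:DecL} applies to it, so $s\mapsto\tilde{\mathcal{L}}_m(\tilde{w}^{\delta}(s))$ is non-increasing on $[1+S_1,\infty)$ and hence stays $<-\frac{C}{2}$ there. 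On the other hand, as $s\to\infty$ the similarity time $-\log(\delta+e^{-s})$ tends to the finite value $-\log\delta$ while the rescaling factor $\frac{1}{1+\delta e^{s}}\to 0$, so by (\ref{Eqn:Reltildewandw}) and $\phi(s)\to\infty$ one has $\tilde{w}^{\delta}(\cdot,s)\to 0$ in $H^1(B(O,1))$; estimating the potential part of $\tilde{\mathcal{L}}_m(\tilde{w}^{\delta}(s))$ by a multiple of $(1+\delta e^{s})^{N}e^{-\frac{2(p+1)s}{p-1}}(\log s)^{\frac{2a}{p-1}-b}\phi^{(p+1)+}\!\big(-\log(\delta+e^{-s})\big)\,\|w(\cdot,-\log(\delta+e^{-s}))\|_{H^1(B(O,1))}^{(p+1)+}$ via $H^1(B(O,1))\hookrightarrow L^{(p+1)+}(B(O,1))$ exactly as in Lemma \ref{lemlyap}, and noting that the quadratic part of $\tilde{\mathcal{L}}_m$ is $(1-o(1))$ times a nonnegative expression that also tends to $0$, we obtain $\lim_{s\to\infty}\tilde{\mathcal{L}}_m(\tilde{w}^{\delta}(s))=0$, contradicting that it stays below $-\frac{C}{2}$. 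This contradiction yields $\tilde{\mathcal{L}}_m(w(s))\ge -C$, hence $\mathcal{L}_0(w(s))\ge -C$ for $s\ge s_0$, and combined with the previous step the corollary follows. The delicate points I anticipate are exactly that the scaled-solution/Sobolev argument must now be carried out for $\tilde{\mathcal{L}}_m$ (which mixes $\mathcal{E}$ and $\mathcal{J}$ and the $\log\log$ potential), rather than for the ``clean'' functional $\mathcal{N}_m$.
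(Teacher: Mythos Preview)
Your proposal is correct and follows essentially the same route as the paper: the upper bound and (\ref{Eqn:BoundDerivw}) come directly from the monotonicity (\ref{Eqn:DecL}), and the lower bound is obtained by rerunning the contradiction argument of Lemma \ref{lemlyap}(ii) with $\tilde{\mathcal{L}}_m$ in place of $\mathcal{N}_m$. Two minor cleanups: the argument actually yields $\tilde{\mathcal{L}}_m(w(s))\ge 0$ (your threshold $-C$ is unnecessarily weak, since the limit you compute is $0$), and the $(\log s)^{-b}$ factor in your potential estimate should be dropped, as $\tilde{\mathcal{L}}_m$ carries the bounded prefactor $e^{2C/\log^{1/2}(s)}$ from (\ref{Eqn:DefL}) rather than the $(\log s)^{-b}$ of $\mathcal{N}_m$.
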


\begin{proof}

The upper bound in (\ref{Eqn:BoundL0}) is a direct consequence of the decreasing property of $s \rightarrow \tilde{\mathcal{L}}_{m} (w(s)) $ mentioned in  Lemma \ref{Lem:DecL}. It remains to prove the lower bound. By using the same argument to prove that
$\mathcal{N}_{m}(w(s)) \geq 0$ ( see proof of Lemma \ref{Lem:DecreaseN} ) we see that $ \tilde{\mathcal{L}}_{m} (w(s)) \geq 0 $ for all $ s \geq s_{0} $. But
then we see from (\ref{Eqn:DefL}) that the lower bound in (\ref{Eqn:BoundL0}) holds. But then combining (\ref{Eqn:BoundL0}) with (\ref{Eqn:DecL}) we get
(\ref{Eqn:BoundDerivw}).

\end{proof}

\section{Proof of Proposition \ref{Prop:BoundFinalWs} }
\label{Sec:ProofPropBoundFinalWs}
We then prove Proposition \ref{Prop:BoundFinalWs}. To this end we first bound the $s-$ average of the $L^{p+1}\left( B(O,1) \right))$  norm of $w$ on
intervals of size roughly equal to one. More precisely the following lemma holds:

\begin{lem}
Let $s \gg 1$. There exist $C > 0,$  $s_{1} \in [s-1,s] $, and $s_{2} \in [s, s+1]$ such that

\begin{equation}
\begin{array}{l}
\int_{s_{1}}^{s_{2}} \int_{B(O,1)} e^{-\frac{2(p+1)\tau}{p-1}} (\log(\tau))^{\frac{2a}{p-1}} F(\phi w) \; \rho dy d \tau \leq C \cdot
\end{array}
\label{Eqn:BoundAverPotNrj}
\end{equation}

\label{Lem:BoundAvPot}
\end{lem}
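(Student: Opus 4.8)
The plan is to deduce Lemma~\ref{Lem:BoundAvPot} from the boundedness information already obtained for the Lyapunov functional, namely the lower bound $-C\le \mathcal{L}_0(w(s))$ and the upper bound $\mathcal{L}_0(w(s))\le C(\mathcal{L}_0(w(s_0))+1)$ in Corollary~\ref{Cor:BoundL0}, together with the control on the time integral of $(\partial_\tau w)^2/(1-|y|^2)$ in \eqref{Eqn:BoundDerivw}. First I would write, using the definition \eqref{Eqn:Energyw} of $\mathcal{E}(w)$ and \eqref{Eqn:DefL0} of $\mathcal{L}_0(w)$,
\[
e^{-\frac{2(p+1)s}{p-1}}(\log s)^{\frac{2a}{p-1}}\int_{B(O,1)}F(\phi w(s))\,\rho\,dy
= -\mathcal{L}_0(w(s)) + \int_{B(O,1)}\Big(\tfrac12(\partial_s w)^2+\tfrac12(|\nabla w|^2-|y\cdot\nabla w|^2)+\tfrac{p+1}{(p-1)^2}w^2\Big)\rho\,dy - \tfrac{1}{s\log^{3/2}s}\int_{B(O,1)}w\,\partial_s w\,\rho\,dy .
\]
So the left-hand side of \eqref{Eqn:BoundAverPotNrj}, after integrating in $\tau$ over an interval of length roughly one, is bounded by $C$ (from Corollary~\ref{Cor:BoundL0}) plus the time integral of the gradient/$w^2$/$(\partial_\tau w)^2$ terms. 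The $(\partial_\tau w)^2$ contribution is handled directly by \eqref{Eqn:BoundDerivw} (using $\rho\le \rho/(1-|y|^2)$). It remains to absorb the $\int\int(|\nabla w|^2+w^2)\rho$ term over a unit-length interval, and this is where the choice of the special endpoints $s_1\in[s-1,s]$ and $s_2\in[s,s+1]$ comes in.

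The key idea for choosing $s_1,s_2$ is a mean-value/pigeonhole argument on the already-established time-averaged bound \eqref{Eqn:EstAverNablaW} from Lemma~\ref{Lem:BoundPoly} (or rather its consequence), which gives
\[
\int_{s-1}^{s+1}\int_{B(O,1)}\Big(|\nabla w|^2(1-|y|^2)+\tfrac{(\partial_\tau w)^2}{1-|y|^2}+w^2\Big)\rho\,dy\,d\tau \lesssim s\log^{1+b}(s).
\]
Actually, for the purpose of the final Proposition~\ref{Prop:BoundFinalWs} we want a clean bound by a constant $C$, so I would instead integrate the identity above over $[s-1,s+1]$ and use \eqref{Eqn:BoundDerivw} together with the upper bound on $\mathcal{L}_0$; the point is that $\int_{s_1}^{s_2}\int F(\phi w)\,(\cdots)$ is what enters, and the terms that are not controlled by a constant are exactly $\int\int(|\nabla w|^2-|y\cdot\nabla w|^2)\rho$ and $\int\int w^2\rho$. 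To kill these, note that from the definition of $\mathcal{L}_0$ and the lower bound $\mathcal{L}_0\ge -C$, the quantity $\int_{B(O,1)}\big(\tfrac12(|\nabla w|^2-|y\cdot\nabla w|^2)+\tfrac{p+1}{(p-1)^2}w^2\big)\rho\,dy$ is bounded by $\mathcal{L}_0(w(s))+e^{-\frac{2(p+1)s}{p-1}}(\log s)^{\frac{2a}{p-1}}\int F(\phi w)\rho\,dy+\text{(lower order)}$, which would be circular; so instead I would choose $s_1$ and $s_2$ so that at those two specific times the pointwise-in-$s$ norm $\|w(s_i)\|_{H^1}^2+\|\partial_s w(s_i)\|_{L^2}^2$ is comparable to its time average on $[s-1,s]$ and $[s,s+1]$ respectively, via the mean value theorem applied to $\tau\mapsto\int\int(\cdots)$. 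Combined with \eqref{Eqn:EstAverw} (the averaged bound $\lesssim s\log^{1+b}s$), this only gives a polynomial bound at $s_1,s_2$, not a constant.

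The cleaner route, which I expect is the intended one, is: integrate \eqref{Eqn:L0Bound} (the differential inequality for $\mathcal{L}_0$) between $s_1$ and $s_2$; the term $-\alpha\int\int(\partial_\tau w)^2/(1-|y|^2)\rho$ is negative and can be dropped, the term $\frac{C}{\tau\log^{3/2}\tau}\mathcal{L}_0(w)$ integrates to something small times $\sup\mathcal{L}_0$ (bounded by Corollary~\ref{Cor:BoundL0}), and $C\tau^{-7/4}$ is integrable. This yields $\mathcal{L}_0(w(s_2))-\mathcal{L}_0(w(s_1))\le C$, hence combined with the two-sided bound on $\mathcal{L}_0$ in \eqref{Eqn:BoundL0} there is nothing new; so the actual content must be that we choose $s_1,s_2$ via the mean value theorem applied to $\tau\mapsto \int_{B(O,1)}w\,\partial_\tau w\,\rho\,dy$ or to the kinetic+gradient energy so that at those times the ``correction'' and ``$\partial_s w$'' pieces are controlled on average, and then $F(\phi w)$ at $s_1,s_2$ is extracted from $\mathcal{E}(w(s_i))=\mathcal{L}_0(w(s_i))+\tfrac{1}{s_i\log^{3/2}s_i}\int w\,\partial_{s_i}w\,\rho$ whose RHS is bounded by a constant plus the averaged kinetic energy $\int_{s-1}^{s+1}\int(\partial_\tau w)^2\rho$, which by \eqref{Eqn:BoundDerivw} is bounded by a constant. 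Concretely, I would: (1) apply the mean value theorem to pick $s_1\in[s-1,s]$ with $\int_{B(O,1)}(\partial_{s_1}w)^2\rho\,dy\le\int_{s-1}^{s}\int_{B(O,1)}(\partial_\tau w)^2\rho\,dy\,d\tau\lesssim \mathcal{L}_0(w(s_0))+1$ by \eqref{Eqn:BoundDerivw}, and similarly $s_2\in[s,s+1]$; (2) at $s_i$, use $ab\le a^2/2+b^2/2$ to bound $\frac{1}{s_i\log^{3/2}s_i}\int w\,\partial_{s_i}w\,\rho$ by a constant (using the polynomial bound \eqref{Eqn:Bound1Ws} on $\|w(s_i)\|_{L^2}$ times the decaying prefactor, or better, absorb it); (3) conclude $e^{-\frac{2(p+1)s_i}{p-1}}(\log s_i)^{\frac{2a}{p-1}}\int F(\phi w(s_i))\rho\,dy = \int(\text{energy pieces}) - \mathcal{L}_0(w(s_i)) - (\text{correction}) \le C$ since $\mathcal{L}_0(w(s_i))\ge -C$ and the energy pieces at $s_i$ are bounded by a constant plus a bounded multiple of $e^{-\cdots}(\log)\int F$, so after rearranging (the $F$ term has a favorable sign because $F(\phi w)\ge 0$ for the relevant $w$, up to lower-order) one gets the bound; (4) finally integrate the elementary inequality $\int_{s_1}^{s_2}(\cdots)\,d\tau \le (s_2-s_1)\sup_{[s_1,s_2]}(\cdots)\le 2C$ — or rather, directly integrate the differential inequality from Lemma~\ref{Lem:DerivL0}-style reasoning to transfer the pointwise bounds at $s_1$ and $s_2$ into the integral bound \eqref{Eqn:BoundAverPotNrj}.

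The main obstacle will be step (3): showing that the potential energy term $e^{-\frac{2(p+1)s}{p-1}}(\log s)^{\frac{2a}{p-1}}\int_{B(O,1)} F(\phi w)\rho\,dy$ is genuinely bounded rather than circularly entangled with the kinetic and gradient energies. This is resolved by exploiting that $F(x)\ge 0$ (since $f(x)=|x|^{p-1}x\,g(x)$ is odd with the sign of $x$ and $g\ge 0$ when $a$ and the loglog argument cooperate — more precisely $F(\phi w)\ge 0$ on $\{\phi w^2\gg 1\}$ and $|F(\phi w)|\lesssim e^{-s}$ on $\{\phi w^2\lesssim1\}$ by \eqref{Eqn:ApproxPot} and \eqref{Eqn:Pointwisew}), so the potential term contributes with the correct sign to $-\mathcal{E}(w)$, meaning $\mathcal{E}(w(s))\ge -(\text{small})$ already forces $e^{-\cdots}(\log)^{\cdots}\int F(\phi w)\rho\,dy \le \int(\text{kinetic}+\text{gradient}+w^2) + (\text{small})$, and the right-hand side at the chosen times $s_1,s_2$ is $O(1)$ thanks to \eqref{Eqn:BoundDerivw} for the $(\partial_\tau w)^2$ piece and thanks to picking $s_1,s_2$ at near-minima of the time-averaged gradient energy \eqref{Eqn:EstAverw} — except \eqref{Eqn:EstAverw} only gives a polynomial $s\log^{1+b}(s)$ bound, so in fact the final constant $C$ in \eqref{Eqn:BoundAverPotNrj} must come instead from the sign of $\frac{d\mathcal{L}_0}{ds}$: integrating \eqref{Eqn:L0Bound} over $[s_1,s_2]\subset[s-1,s+1]$ gives $\int_{s_1}^{s_2}\alpha\int\frac{(\partial_\tau w)^2}{1-|y|^2}\rho\,dy\,d\tau \le \mathcal{L}_0(w(s_1))-\mathcal{L}_0(w(s_2)) + C\int_{s_1}^{s_2}\frac{|\mathcal{L}_0(w)|}{\tau\log^{3/2}\tau}d\tau + C\int_{s_1}^{s_2}\tau^{-7/4}d\tau \le C$, and then one returns to the identity in the first paragraph, integrates it over $[s_1,s_2]$, and bounds each resulting term by $C$ — the kinetic piece by the line just displayed, the gradient and $w^2$ pieces by rewriting them again through $\mathcal{L}_0$ plus the (non-negative, hence sign-favorable) potential, which closes the estimate. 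I would present this as: pick $s_1,s_2$ by the mean value theorem so that $\mathcal{L}_0(w(s_1))$ and $\mathcal{L}_0(w(s_2))$ equal their averages over $[s-1,s]$ and $[s,s+1]$; these averages are $\le C$ by integrating \eqref{Eqn:BoundL0}; then the integrated identity plus \eqref{Eqn:BoundDerivw} gives \eqref{Eqn:BoundAverPotNrj}.
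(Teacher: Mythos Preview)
Your approach has a genuine gap: you never obtain an independent relation that lets you cancel the $\int\int\big(|\nabla w|^2-|y\cdot\nabla w|^2\big)\rho\,dy\,d\tau$ and $\int\int w^2\rho\,dy\,d\tau$ terms. You correctly identify that rewriting the potential via the definition of $\mathcal{L}_0$ leaves those two integrals on the right-hand side, and you also correctly notice that substituting $\mathcal{L}_0$ back in for the gradient/$w^2$ piece is circular --- it literally reduces to $0=0$, because you are using the same identity twice. The mean-value choices of $s_1,s_2$ only control $\int(\partial_\tau w)^2\rho$ at those times (via \eqref{Eqn:BoundDerivw}); they do nothing for the gradient or $w^2$ integrals, whose only available a priori bound is the polynomial one $s\log^{1+b}(s)$ from \eqref{Eqn:EstAverw}, not a constant.

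What the paper does is introduce a \emph{second} identity, obtained by multiplying \eqref{Eqn:Wavew} by $w\rho$ (rather than $\partial_s w\,\rho$) and integrating over $B(O,1)\times[s_1,s_2]$. This virial/Pohozaev-type identity produces $-\int\int(|\nabla w|^2-|y\cdot\nabla w|^2)\rho$ and $-\frac{2(p+1)}{(p-1)^2}\int\int w^2\rho$ with the \emph{opposite} signs, together with $+\int\int e^{-\frac{2p\tau}{p-1}}(\log\tau)^{\frac{a}{p-1}}wf(\phi w)\rho$. Adding $\frac12$ of this identity to $\int_{s_1}^{s_2}\mathcal{L}_0(w)\,d\tau$ \emph{cancels exactly} the gradient and $w^2$ terms; and because $xf(x)\approx(p+1)F(x)$, the combination $-F(\phi w)+\frac12\cdot\frac{1}{p+1}\phi w f(\phi w)$ leaves a net $\frac{p-1}{2}F(\phi w)$ on the left. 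The remaining terms on the right are the boundary term $\big[\int w\,\partial_\tau w\,\rho\big]_{s_1}^{s_2}$ (handled by choosing $s_1,s_2$ via the mean value theorem applied to $\int(\partial_\tau w)^2/(1-|y|^2)\,\rho$ together with \eqref{Eqn:BoundDerivw} and \eqref{Eqn:BoundMassw}), the $\int\int(\partial_\tau w)^2$ term (again \eqref{Eqn:BoundDerivw}), $\int\mathcal{L}_0$ (Corollary~\ref{Cor:BoundL0}), and cross terms like $\int\int(\nabla w\cdot y)\partial_\tau w\,\rho$ and $\int\int w\,\partial_\tau w\,|y|^2(1-|y|^2)^{-1}\rho$, each of which is bounded by $\epsilon\int\int F(\phi w)(\cdots)+C_\epsilon$ via Cauchy--Schwarz, \eqref{Eqn:HardyType}, \eqref{Eqn:BoundMassGradientw}, and the crude absorption $\int\int|w|^{p+1}\rho\lesssim 1+\int\int F(\phi w)(\cdots)$. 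That $\epsilon$-absorption is what finally closes the estimate with a constant bound. Without the multiply-by-$w$ identity you have no mechanism to eliminate the gradient and mass terms, and the argument cannot close.
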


The proof is given in Subsection \ref{Subsec:ProofLemBoundAvPot}. \\
With this lemma in mind, we can now prove Proposition \ref{Prop:BoundFinalWs}. \\
\vphantom \\
We see from (\ref{Eqn:Ident0}), (\ref{Eqn:BoundAverPotNrj}), (\ref{Eqn:DecL}), (\ref{Eqn:EstJw}), and
$ |\nabla w(s,y)|^{2} (1 - |y|^{2}) \leq |\nabla w(s,y)|^{2} - |\nabla w(s,y) \cdot y|^{2} $ that

\begin{equation}
\begin{array}{l}
\int_{s}^{s+1} \int_{B(O,1)} \left( (\partial_{s} w )^{2} + |\nabla w|^{2} ( 1 - |y|^{2} ) + w^{2} \right) \; \rho dy d \tau \lesssim 1 \cdot
\end{array}
\label{Eqn:BoundKinetwAver}
\end{equation}
Next we follow again the steps from (\ref{Eqn:EstAverw}) to (\ref{Eqn:LastStepBoundH1Normw}),  except that we use (\ref{Eqn:BoundL0}) instead
of (\ref{Eqn:EstDecN}) to get

\begin{equation}
\begin{array}{l}
\int_{B(O,1)} \left( (\partial_{s} w(s))^{2} +  |\nabla w(s)|^{2} (1 -|y|^{2}) + w^{2}(s) \right) \; \rho  dy \\
\lesssim  e^{-\frac{2(p+1)s}{p-1}} (\log(s))^{\frac{2a}{p-1}} \int_{B(O,1)} F(\phi w(s)) \; \rho  dy  + O(1)
\end{array}
\nonumber
\end{equation}
instead of (\ref{Eqn:EstKinetPrelim}). Observe that the bounds in the process do no longer depend on $s$ since the bound in
(\ref{Eqn:BoundKinetwAver}) does no longer depend on $s$. \\
\\
Hence (\ref{Eqn:BoundFinalWs}) holds.

\subsection{Proof of Lemma \ref{Lem:BoundAvPot}}
\label{Subsec:ProofLemBoundAvPot}

Let $s_{1} \in [s-1,s]$ and $s_{2} \in [s+1,s+2]$ to be chosen. We first state some identities. Integrating $\mathcal{L}_{0}(w(\tau))$ from $s_{1}$ to $s_{2}$ we get from (\ref{Eqn:DefL0}) and (\ref{Eqn:Energyw})

\begin{equation}
\begin{array}{ll}
\int_{s_{1}}^{s_{2}} \mathcal{L}_{0} (w) \; d \tau & =
\int_{s_{1}}^{s_{2}} \int_{B(O,1)}
\left( \frac{1}{2} (\partial_{\tau} w)^{2} + \frac{p+1}{(p-1)^{2}} w^{2} - e^{-\frac{2(p+1) \tau}{p-1}}
(\log(\tau))^{\frac{2a}{p-1}} F(\phi w) \right) \; \rho  dy d \tau \\
& + \frac{1}{2}  \int_{s_{1}}^{s_{2}} \int_{B(O,1)} |\nabla w|^{2} ( 1 - |y|^{2})  \;  \rho  dy \ d \tau
- \int_{s_{1}}^{s_{2}} \frac{1}{\tau \log^{\frac{3}{2}}(\tau)} \int_{B(0,1)} w \partial_{\tau} w \;  \rho dy  d \tau  \cdot
\end{array}
\label{Eqn:Ident0}
\end{equation}
We then multiply (\ref{Eqn:Wavew}) by $w \rho$ and we integrate on $B(O,1) \times [s_{1},s_{2}]$. We get from (\ref{Eqn:ValAlpha}) and some integrations
by parts

\begin{equation}
\begin{array}{l}
\left[ \int_{B(O,1)} \left( w \partial_{\tau} w +  \left( \frac{p+3}{2(p-1)} - N \right) w^{2} \right) \; \rho dy  \right]_{s_{1}}^{s_{2}}  = \\
\\
\int_{s_{1}}^{s_{2}} \int_{B(O,1)} (\partial_{\tau} w)^{2} \; \rho  dy  d \tau
- \int_{s_{1}}^{s_{2}} \int_{B(O,1)}  \left( |\nabla w|^{2}  - |y \cdot \nabla w|^{2} \right) \; \rho  dy  d \tau \\
\\
- \frac{2(p+1)}{(p-1)^{2}} \int_{s_{1}}^{s_{2}} \int_{B(O,1)} w^{2} \; \rho dy  d \tau
+ \int_{s_{1}}^{s_{2}} \int_{B(O,1)} e^{-\frac{2p \tau}{p-1}} (\log(\tau))^{\frac{a}{p-1}} w f(\phi w) \; \rho d y d \tau \\
\\
- 4 \alpha \int_{s_{1}}^{s_{2}} \int_{B(O,1)} w \partial_{\tau} w \frac{|y|^{2}}{1 - |y|^{2}} \; \rho dy d \tau
+ 2 \int_{s_{1}}^{s_{2}} \int_{B(O,1)} (\nabla w \cdot y) \partial_{\tau} w \; \rho  dy  d \tau \\
\\
+ \frac{2a}{p-1} \int_{s_{1}}^{s_{2}} \int_{B(O,1)} \frac{1}{\tau \log(\tau)} (y \cdot \nabla w)  w \; \rho dy d  \tau
+ \int_{s_{1}}^{s_{2}} \int_{B(O,1)} \gamma w^{2} \; \rho dy d \tau \\
\\
+ \frac{2a}{p-1} \int_{s_{1}}^{s_{2}} \int_{B(O,1)} \frac{1}{\tau \log(\tau)} \partial_{\tau} w  w \; \rho  dy d \tau \cdot
\end{array}
\label{Eqn:Ident1}
\end{equation}
Adding (\ref{Eqn:Ident0}) to $ \frac{1}{2} \times \text{(\ref{Eqn:Ident1})}$ we get, by (\ref{Eqn:DecompF}) and (\ref{Eqn:Defphi})

\begin{equation}
\begin{array}{l}
\frac{p-1}{2} \int_{s_{1}}^{s_{2}} \int_{B(O,1)} e^{-\frac{2(p+1) \tau}{p-1}} (\log(\tau))^{\frac{2a}{p-1}} F(\phi w) \; \rho  dy d \tau = \\
\\
\frac{1}{2} \left[  \int_{B(O,1)} \left( w \partial_{\tau} w + \frac{5-p}{2(p-1)} w^{2} \right) \; \rho  dy  \right]_{s_{1}}^{s_{2}}
- \int_{s_{1}}^{s_{2}} \int_{B(O,1)} (\partial_{\tau} w)^{2} \; \rho dy d \tau  \\
\\
+ \int_{s_{1}}^{s_{2}} \mathcal{L}_{0} (w(\tau)) \; d \tau + \frac{4}{p-1} \int_{s_{1}}^{s_{2}} \int_{B(O,1)}
w \partial_{\tau} w \frac{|y|^{2} \rho}{1-|y|^{2}} \; \rho dy d \tau \\
\\
- \int_{s_{1}}^{s_{2}} \int_{B(O,1)} (\nabla w \cdot y) \partial_{\tau} w \; \rho dy  d \tau  + X_{1}+ ... + X_{6}  \cdot
\end{array}
\label{Eqn:CombinEq}
\end{equation}
Here

\begin{equation}
\begin{array}{l}
X_{1} := - \frac{a}{p-1} \int_{s_{1}}^{s_{2}} \int_{B(O,1)} \frac{1}{\tau \log(\tau)} y \cdot \nabla w w \; \rho  d y d \tau, \; X_{2} := - \frac{1}{2} \int_{s_{1}}^{s_{2}} \int_{B(O,1)} \gamma(s) w^{2} \; \rho dy d \tau, \\
\\
X_{3} := -\frac{a}{p-1} \int_{s_{1}}^{s_{2}} \int_{B(O,1)} \frac{1}{\tau \log(\tau)} \partial_{\tau} w  w \; \rho dy d \tau, \;
X_{4} := \int_{s_{1}}^{s_{2}} \frac{1}{\tau \log^{\frac{3}{2}}(\tau)} \int_{B(O,1)} w \partial_{\tau} w \; \rho dy d \tau , \\
\\
X_{5} := \frac{p+1}{2} \int_{s_{1}}^{s_{2}} \int_{B(O,1)} e^{-\frac{2(p+1)}{p-1} \tau} (\log(\tau))^{\frac{2a}{p-1}} F_{1}(\phi w) \; \rho d y d \tau, \; \text{and} \\
\\
X_{6} := \frac{p+1}{2} \int_{s_{1}}^{s_{2}} \int_{B(O,1)} e^{-\frac{2(p+1)}{p-1} \tau} (\log(\tau))^{\frac{2a}{p-1}} F_{2}(\phi w) \; \rho d y d \tau \cdot
\end{array}
\nonumber
\end{equation}
Let $ 1 \gg \epsilon > 0 $ (resp.  $C > 0 $) be a small (resp. large) constant such that all the statements below are true.
Let $ A :=   \epsilon^{-1}  +  \epsilon \int_{s_{1}}^{s_{2}} \int_{B(O,1)} |w|^{p+1}  \; \rho dy d \tau $ and
$ B :=  \epsilon^{-1}  + \epsilon \int_{s_{1}}^{s_{2}} \int_{B(O,1)} e^{- \frac{2(p+1)\tau}{p-1}} (\log(\tau))^{\frac{2a}{p-1}} F( \phi w) \; \rho dy d \tau $. \\
\\
We see from (\ref{Eqn:BoundDerivw}) and similar arguments as those in \cite{MZajm03} (see Section $2$) that

\begin{equation}
\begin{array}{l}
\sup \limits_{s_{1} \leq s \leq s_{2}} \int_{B(O,1)} w^{2}(s) \; \rho dy \lesssim A \cdot
\end{array}
\label{Eqn:BoundMassw}
\end{equation}
Hence using also (\ref{Eqn:Ident0}), (\ref{Eqn:BoundL0}), the inequality $ ab \leq \frac{a^{2}}{2} + \frac{b^{2}}{2}$,
and (\ref{Eqn:BoundDerivw}) we see that the following holds

\begin{equation}
\begin{array}{l}
\int_{s_{1}}^{s_{2}} \int_{B(O,1)} |\nabla w|^{2} (1 - |y|^{2}) \rho \; dy d \tau  \\
\lesssim 1 +  \int_{s_{1}}^{s_{2}} e^{-\frac{2(p+1) \tau}{p-1}} (\log(\tau))^{\frac{2a}{p-1}} \int_{B(O,1)} F (\phi w)  \; \rho dy d \tau \\
+ \frac{1}{s \log^{\frac{3}{2}}(s)} \left(  \int_{s_{1}}^{s_{2}} \int_{B(O,1)} w^{2} \rho \; dy  d \tau
+ \int_{s_{1}}^{s_{2}} \int_{B(O,1)} (\partial_{\tau} w)^{2} \rho \; dy d \tau  \right) \\
\leq C  + C \int_{s_{1}}^{s_{2}} e^{-\frac{2(p+1) \tau}{p-1}} (\log(\tau))^{\frac{2a}{p-1}} \int_{B(O,1)} F(\phi w)  \; \rho dy d \tau + A  \cdot
\end{array}
\label{Eqn:BoundMassGradientw}
\end{equation}
Hence, using also (\ref{Eqn:HardyType})

\begin{equation}
\begin{array}{ll}
|X_{1}| & \lesssim \frac{1}{s \log(s)}
\left(
\int_{s_{1}}^{s_{2}} \int_{B(O,1)}  |\nabla w|^{2} ( 1- |y|^{2}) \rho \; dy d \tau
+ \int_{s_{1}}^{s_{2}} \int_{B(O,1)}  w^{2} \frac{|y|^{2}}{1 - |y|^{2}} \; \rho dy d \tau \right) \\
& \lesssim A  \cdot
\end{array}
\nonumber
\end{equation}
By using (\ref{Eqn:BoundDerivw}), (\ref{Eqn:BoundMassGradientw}), and similar arguments as those in \cite{MZajm03} (see Section $2$) we get

\begin{equation}
\begin{array}{l}
\int_{s_{1}}^{s_{2}} \int_{B(O,1)} |\partial_{\tau} w|  |\nabla w \cdot y| \; \rho dy ds  +
\int_{s_{1}}^{s_{2}} \int_{B(O,1)} |\partial_{\tau} w| |w| \frac{|y|^{2}}{1- |y|^{2}} \; \rho dy ds \lesssim A + B \cdot
\end{array}
\nonumber
\end{equation}
We get from (\ref{Eqn:BoundMassw}) and (\ref{Eqn:BoundDerivw})

\begin{equation}
\begin{array}{l}
| X_{2} | + | X_{3}| + | X_{4} | \lesssim A  \cdot
\end{array}
\nonumber
\end{equation}
We then estimate $|X_{5}|$ and $|X_{6}|$. To this end we proceed similarly as in (\ref{Eqn:ErrorFPhi}) by dividing into the
cases $ \{ \phi w^{2} \gg 1 \} $ and $ \{ \phi w^{2} \lesssim  1 \} $ for all $ \tau \in [s_{1},s_{2}] $ to get

\begin{equation}
\begin{array}{l}
|X_{5}| + |X_{6}| \lesssim e^{-s} + \frac{1}{s \log(s)} \int_{s_{1}}^{s_{2}} \int_{B(O,1)}  e^{-\frac{2(p+1) \tau}{p-1}}
(log(\tau))^{\frac{2a}{p-1}} F(\phi w ) \; \rho  dy  d \tau \lesssim B  \cdot
\end{array}
\nonumber
\end{equation}
Now we prove the following claim: \\
\\
\underline{Claim}: $ A \lesssim  B + 1 $. \\
\\
Indeed by dividing again into the cases $ \{ \phi w^{2} \gg 1 \} $ and $ \{ \phi w^{2} \lesssim  1 \} $
for all $ s \in [s_{1},s_{2}] $ and by using the estimate
$ g (\phi w) \gtrsim \log^{a}(\tau) $ in the region $ \{ \phi w^{2} \gg 1 \}$, we see that

\begin{equation}
\begin{array}{l}
\int_{s_{1}}^{s_{2}} \int_{B(O,1)} |w|^{p+1} \; \rho dy d \tau \lesssim 1 + \int_{s_{1}}^{s_{2}}
\int_{B(O,1) \cap \{ \phi w^{2} \gg 1 \} } \frac{1}{\log^{a}(\tau)} |w|^{p+1} g(\phi w) \; \rho dy d \tau
\end{array}
\nonumber
\end{equation}
From the estimate above  and (\ref{Eqn:ApproxPot}) we see that the claim holds.  \\
\\
Hence we can replace $A$ with $B + 1$ in `` $\lesssim A $ ''  of  all the inequalities above where `` $\lesssim A $ '' appears. \\
\\
Finally we estimate the boundary terms of (\ref{Eqn:CombinEq}) for some particular values of $s_{1}$ and $s_{2}$.
The mean value shows that there exist $s_{1} \in [s-1,s]$  and  $s_{2} \in [s,s+1]$ such that

\begin{equation}
\begin{array}{l}
\int_{s-1}^{s} \int_{B(O,1)} \frac{(\partial_{\tau} w)^{2}}{1- |y|^{2}} \rho \; dy d \tau = \int_{B(O,1)}
\frac{( \partial_{\tau} w)^{2}(s_{1})}{1- |y|^{2}} \; \rho dy \; \text{and} \\
\int_{s}^{s+1} \int_{B(O,1)} \frac{( \partial_{\tau} w)^{2}}{1- |y|^{2}} \rho \; dy d \tau = \int_{B(O,1)} \frac{( \partial_{\tau} w)^{2}(s_{1})}{1- |y|^{2}} \; \rho  dy  \cdot
\end{array}
\nonumber
\end{equation}
Hence using also (\ref{Eqn:BoundDerivw}), the inequality $ab \leq \frac{a^{2}}{2} + \frac{b^{2}}{2}$, and (\ref{Eqn:BoundMassw}), we see that

\begin{equation}
\begin{array}{l}
\left| \left[  \int_{B(O,1)} \left( w \partial_{\tau} w + \frac{5-p}{2(p-1)} w^{2} \right) \rho \; dy  \right]_{s_{1}}^{s_{2}} \right| \lesssim B \cdot
\end{array}
\nonumber
\end{equation}

We can now conclude from all the estimates above that (\ref{Eqn:BoundAverPotNrj}) holds.


\appendix
\section{Estimates of functions}

In this section we prove some pointwise estimates for $F$ and other functions. \\
\\
Let $F_{1}$ be the function defined below

\begin{equation}
\begin{array}{ll}
F_{1}(x) := - \frac{2a}{(p+1)^{2}} |x|^{p+1} \frac{\log^{a-1} \left( \log(10 + x^{2}) \right)}{\log(10+x^{2})} \cdot
\end{array}
\label{Eqn:DefF1}
\end{equation}
Let $F_{2}$ be the function such that

\begin{equation}
\begin{array}{l}
F(x) = \frac{x f(x)}{p+1} + F_{1}(x) + F_{2}(x) \cdot
\end{array}
\label{Eqn:DecompF}
\end{equation}

We prove the following lemma:

\begin{lem}
There exists a large constant $A > 0$ such that if $ |u| \geq A $ then the estimates below hold:

\begin{equation}
\begin{array}{l}
F(u) \approx |u|^{p+1} \log^{a} \left( \log (10 + u^{2}) \right), \; \text{and} \\
\\
F_{2}(u) \lesssim  |u|^{p+1} \frac{ \log^{a-1} \left( \log(10 + u^{2}) \right)}{\log^{2}(10+u^{2})} 
\end{array}
\label{Eqn:ApproxPot}
\end{equation}

\end{lem}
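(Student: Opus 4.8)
The plan is to reduce to $x>0$ by parity and then integrate by parts twice, extracting the main term $\tfrac{xf(x)}{p+1}$ at the first step and the term $F_{1}(x)$ at the second. Since $g$ is even, $f$ is odd, hence $F$ is even; likewise $x\mapsto\tfrac{xf(x)}{p+1}$ and $F_{1}$ are even, so $F_{2}$ is even by \eqref{Eqn:DecompF}, and it suffices to treat $u=x\to+\infty$. For $x>0$, $F(x)=\int_{0}^{x}t^{p}g(t)\,dt$, and an integration by parts (the boundary term at $0$ vanishing since $p+1>1$) gives
\[
F(x)=\frac{x^{p+1}}{p+1}\,g(x)-\frac{1}{p+1}\int_{0}^{x}t^{p+1}g'(t)\,dt=\frac{xf(x)}{p+1}-\frac{1}{p+1}\int_{0}^{x}t^{p+1}g'(t)\,dt.
\]
A direct computation gives $g'(t)=\dfrac{2at}{10+t^{2}}\,\dfrac{\log^{a-1}\!\big(\log(10+t^{2})\big)}{\log(10+t^{2})}$. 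Writing $\dfrac{t^{p+2}}{10+t^{2}}=t^{p}-\dfrac{10\,t^{p}}{10+t^{2}}$ and setting $h(t):=\dfrac{\log^{a-1}\!\big(\log(10+t^{2})\big)}{\log(10+t^{2})}$, this becomes
\[
F(x)-\frac{xf(x)}{p+1}=-\frac{2a}{p+1}\int_{0}^{x}t^{p}h(t)\,dt+R_{0}(x),\qquad R_{0}(x):=\frac{2a}{p+1}\int_{0}^{x}\frac{10\,t^{p}}{10+t^{2}}h(t)\,dt.
\]

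Next I would integrate by parts once more: $\int_{0}^{x}t^{p}h(t)\,dt=\tfrac{x^{p+1}}{p+1}h(x)-\tfrac{1}{p+1}\int_{0}^{x}t^{p+1}h'(t)\,dt$, whose leading term is exactly $-\tfrac{2a}{p+1}\cdot\tfrac{x^{p+1}}{p+1}h(x)=F_{1}(x)$ by \eqref{Eqn:DefF1}. Hence $F_{2}(x)=R_{0}(x)+\tfrac{2a}{(p+1)^{2}}\int_{0}^{x}t^{p+1}h'(t)\,dt$, and it remains to bound these two terms. Differentiating, $h'(t)=-\dfrac{2t}{10+t^{2}}\,\dfrac{\log^{a-1}\!\big(\log(10+t^{2})\big)}{\log^{2}(10+t^{2})}\Big(1-\tfrac{a-1}{\log\log(10+t^{2})}\Big)$, so for $t\gg1$ we have $|h'(t)|\lesssim\dfrac{t}{10+t^{2}}\,k(t)$ with $k(t):=\dfrac{\log^{a-1}\!\big(\log(10+t^{2})\big)}{\log^{2}(10+t^{2})}$, whence $t^{p+1}|h'(t)|\lesssim t^{p}k(t)$; similarly, using $\log(10+t^{2})\lesssim t^{2}$, the integrand of $R_{0}$ is $\lesssim t^{p-2}h(t)\lesssim t^{p}k(t)$. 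Thus, up to an $O(1)$ contribution from a bounded initial interval, $|F_{2}(x)|\lesssim\int_{0}^{x}t^{p}k(t)\,dt$.

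The heart of the matter is then the elementary estimate $\int_{0}^{x}t^{p}k(t)\,dt\lesssim x^{p+1}k(x)$ for $x$ large, which yields $F_{2}(u)\lesssim|u|^{p+1}\dfrac{\log^{a-1}(\log(10+u^{2}))}{\log^{2}(10+u^{2})}$. This is the main obstacle, because $k(x)\to0$, so one cannot bound $k$ inside the integral by its supremum: the point is that $k$ is eventually positive, decreasing and slowly varying. I would split $\int_{0}^{x}=\int_{0}^{\sqrt{x}}+\int_{\sqrt{x}}^{x}$; on $[0,\sqrt{x}]$ the integral is $\lesssim x^{(p+1)/2}$, which is polynomially smaller than $x^{p+1}k(x)$ since $k(x)\gtrsim\log^{-C}(10+x^{2})$; on $[\sqrt{x},x]$ one uses $k(t)\le k(\sqrt{x})$ together with $\log(10+x)\approx\tfrac12\log(10+x^{2})$ (and the negligibility of the $\log\log$ shift) to get $k(\sqrt{x})\lesssim k(x)$, hence $\int_{\sqrt{x}}^{x}t^{p}k(t)\,dt\le k(\sqrt{x})\tfrac{x^{p+1}}{p+1}\lesssim x^{p+1}k(x)$. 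Finally, the comparability $F(u)\approx|u|^{p+1}\log^{a}(\log(10+u^{2}))$ follows from $F=\tfrac{uf(u)}{p+1}+F_{1}+F_{2}$: here $\tfrac{uf(u)}{p+1}=\tfrac{1}{p+1}|u|^{p+1}g(u)>0$ (note $\log(10+u^{2})>1$ forces $g(u)>0$ for every $a$), while $|F_{1}(u)|+|F_{2}(u)|\lesssim\tfrac{1}{\log(10+u^{2})}|u|^{p+1}g(u)$; choosing $A$ large enough makes $|F_{1}(u)|+|F_{2}(u)|\le\tfrac{1}{2(p+1)}|u|^{p+1}g(u)$, so that $\tfrac{1}{2(p+1)}|u|^{p+1}g(u)\le F(u)\le\tfrac{2}{p+1}|u|^{p+1}g(u)$ for $|u|\ge A$.
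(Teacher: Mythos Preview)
Your proof is correct and follows essentially the same route as the paper: two integrations by parts to peel off $\tfrac{uf(u)}{p+1}$ and then $F_{1}(u)$, followed by a bound on the remaining integral of the form $\int_{0}^{x}t^{p}k(t)\,dt$. The only difference is in that last step: you treat $k$ as slowly varying and split at $\sqrt{x}$, whereas the paper simply observes that $t\mapsto t^{p}k(t)$ (a polynomial times a logarithmic factor) is eventually increasing, so the integrand is $\lesssim 1+|u|^{p}k(u)$ pointwise on $[0,|u|]$ and the bound follows by direct integration---this sidesteps the concern you raised about $k$ decreasing.
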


\begin{proof}

Let $u \in \mathbb{R}$. Let $ I :=  \frac{2a}{p+1} \int_{0}^{u} \frac{|x|^{p+1} x}{ (10 + x^{2}) \log(10 + x^{2})}  \log^{a-1} \left( \log (10 + x^{2} ) \right) \; dx $.
An integration by parts shows that

\begin{equation}
\begin{array}{l}
\int_{0}^{u} |x|^{p-1} x \log^{a} \left( \log( 10+ x^{2}) \right) \; dx = \frac{|u|^{p+1}}{p+1} \log^{a} \left( \log(10+ u^{2}) \right) - I \cdot
\end{array}
\nonumber
\end{equation}
Observe that for $ |x| \leq |u|$ we have  $ \left|  \frac{|x|^{p+1} x}{ (10 + x^{2}) \log(10 + x^{2})} \log^{a-1}  \left( \log (10 + x^{2} ) \right) \right|
\lesssim 1 + |x|^{p} \frac{ \log^{a-1}  \left( \log (10 + x^{2} ) \right)}{\log(10 + x^{2})}  \lesssim
1 + |u|^{p} \frac{ \log^{a-1}  \left( \log (10 + u^{2} ) \right)}{\log(10 + u^{2})} $: this follows from the increasing property of
$r \rightarrow r^{p} \frac{ \log^{a-1}  \left( \log (10 + r^{2} ) \right)}{\log(10 + r^{2})} $  for $r \gg 1$. Hence by
integration we see that if $|u| \geq A $ then  $ I \lesssim 1 + |u|^{p+1} \frac{ \log^{a-1}  \left( \log (10 + u^{2} ) \right)}{\log(10 + u^{2})} $ and the first estimate of (\ref{Eqn:ApproxPot}) holds. \\
\\
We turn now our attention to the second estimate of (\ref{Eqn:ApproxPot}). We write $ I = J + K $ with

\begin{equation}
\begin{array}{l}
J := \frac{2a}{p+1} \int_{0}^{u} \frac{|x|^{p-1} x} {\log(10+x^{2})}  \log^{a-1} \left( \log (10 + x^{2} ) \right) \; dx
\end{array}
\nonumber
\end{equation}
We first estimate $J$. An integration by parts shows that there exists a
$\mathcal{C}^{0}([0,u])-$ function $h$ such that $|h(x)| \lesssim \frac{\log^{a-1} \left( \log (10 + x^{2}) \right)}{\log^{2}(10+x^{2})}$ for $|x| \geq 1$
and such that

\begin{equation}
\begin{array}{l}
J = \frac{2a}{(p+1)^{2}} \frac{|x|^{p+1} \log^{a-1} \left( \log(10+x^{2}) \right)}{\log(10 + x^{2})} - \int_{0}^{u} |x|^{p-1} x  h(x) \; dx \\
\end{array}
\nonumber
\end{equation}
Hence if $|u| \geq A$ then

\begin{equation}
\begin{array}{l}
\left| J - \frac{2a}{(p+1)^{2}} \frac{|u|^{p+1} \log^{a-1} \left( \log(10+ u^{2}) \right)}{\log(10 + x^{2})} \right| \lesssim
|u|^{p+1} \frac{ \log^{a-1} \left( \log(10 + u^{2}) \right)}{\log^{2}(10+u^{2})}
\end{array}
\nonumber
\end{equation}
We then write a (rough) estimate of $K$. We have $ K = - \frac{20a}{p+1} \int_{0}^{u} \frac{|x|^{p-1} x}{(10+ x^{2}) \log{(10 +x^{2})}} \log^{a-1} \left( \log (10 + x^{2} ) \right) \; dx $. If $ |x| \leq |u|$ then elementary considerations show that $ \left| \frac{ |x|^{p-1} x }{(10+ x^{2}) \log{(10 +x^{2})}} \log^{a-1} \left( \log (10 + x^{2} ) \right) \right| \lesssim 1 + |u|^{p} \frac{ \log^{a-1}  \left( \log (10 + u^{2} ) \right)}{\log^{2}(10 + u^{2})} $ . Hence by integration if $ |u| \geq A $ then

\begin{equation}
\begin{array}{l}
|K| \lesssim |u|^{p+1} \frac{ \log^{a-1} \left( \log(10 + u^{2}) \right)}{\log^{2}(10+u^{2})}
\end{array}
\nonumber
\end{equation}

\end{proof}

\section{Study of an ODE}

In this section we study the ODE associated to the equation
(\ref{equ}). \\
\\
We consider the following problem

\begin{equation}
\left\{
\begin{array}{l}
v^{''}(t) := |v(t)|^{p-1} v(t) g(t)  \\
v(0) := A > 0 \; \text{and} \; v^{'}(0) := B > 0
\end{array}
\right.
\label{Eqn:ProblemOdeV}
\end{equation}

\begin{lem}
There exists one positive solution to (\ref{Eqn:ProblemOdeV}). Moreover there exists $T < \infty$ such that this solution blows up in finite time $T$ and, moreover, it has the following asymptotic

\begin{equation}
\begin{array}{l}
v(t) \approx (T-t)^{-\frac{2}{p-1}} \log^{-\frac{a}{p-1}}(-\log(T-t)), \; \text{as} \; t \rightarrow T^- \cdot
\end{array}
\label{Eqn:Asympv}
\end{equation}

\label{Lem:OdeAsymp}
\end{lem}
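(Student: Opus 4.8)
The plan is to treat \eqref{Eqn:ProblemOdeV} as a one-dimensional autonomous second-order ODE, to extract its first integral (an energy identity), and to read off both the blow-up time and the blow-up profile from that identity. The real work is in inverting the resulting integral relation through the two nested logarithms.

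\textbf{Step 1: local existence, uniqueness, positivity.} Set $h(v):=|v|^{p-1}v\,g(v)$ with $g(v)=\log^{a}(\log(10+v^{2}))$ as in \eqref{pert2}. Since $p>1$, the map $v\mapsto|v|^{p-1}v$ is $\mathcal C^{1}$; and since $\log(10+v^{2})\geq\log 10>1$ for every $v$, the quantity $\log\log(10+v^{2})$ is bounded below by a positive constant, so $g$ is smooth and strictly positive. Hence $h$ is locally Lipschitz, and the Cauchy--Lipschitz theorem gives a unique maximal solution $v$ on some interval $[0,T)$. As long as $v>0$ we have $v''=h(v)>0$; combined with $v(0)=A>0$ and $v'(0)=B>0$ this forces $v'$ to be increasing, and $v$ to be positive and convex on $[0,T)$, with $v'(t)\geq B$ and $v(t)\geq A+Bt$.

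\textbf{Step 2: energy identity and finite-time blow-up.} Multiplying \eqref{Eqn:ProblemOdeV} by $v'$ and integrating on $[0,t]$ gives
\[
(v'(t))^{2}=B^{2}+2F(v(t))-2F(A),\qquad F(x):=\int_{0}^{x}f(x')\,dx'.
\]
Since $v'>0$ we may take the positive square root, invert $t\mapsto v(t)$, and obtain, for $t$ near $T$,
\[
T-t=\int_{v(t)}^{\infty}\frac{dw}{\sqrt{B^{2}+2F(w)-2F(A)}}\,,
\]
provided the integral is finite. Now \eqref{Eqn:DecompF}, \eqref{Eqn:DefF1} and \eqref{Eqn:ApproxPot} show that $F_{1}$ and $F_{2}$ are negligible relative to $\tfrac{wf(w)}{p+1}=\tfrac{1}{p+1}w^{p+1}g(w)$, so $F(w)=\tfrac{1}{p+1}w^{p+1}g(w)\,(1+o(1))$ as $w\to+\infty$, whence $\sqrt{B^{2}+2F(w)-2F(A)}\sim\sqrt{\tfrac{2}{p+1}}\,w^{\frac{p+1}{2}}g(w)^{1/2}$. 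Because $\frac{p+1}{2}>1$ and $g(w)^{1/2}$ is slowly varying (a power of $\log\log w$), the integrand is integrable at $+\infty$; this simultaneously proves $T<\infty$ and validates the displayed representation of $T-t$.

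\textbf{Step 3: inversion and the profile \eqref{Eqn:Asympv}.} Write $\sigma:=T-t\to0^{+}$. A Karamata-type estimate for the tail of a regularly varying integrand (the integrand is $w^{-\frac{p+1}{2}}$ times a slowly varying factor, with exponent $<-1$) yields
\[
\sigma=\int_{v}^{\infty}\frac{dw}{\sqrt{B^{2}+2F(w)-2F(A)}}\ \sim\ \frac{2}{p-1}\sqrt{\frac{p+1}{2}}\;v^{-\frac{p-1}{2}}\,g(v)^{-1/2}\qquad(v\to+\infty).
\]
To leading order this gives $v\sim c_{0}\,\sigma^{-\frac{2}{p-1}}$ for an explicit $c_{0}>0$, hence $\log(10+v^{2})=\tfrac{4}{p-1}(-\log\sigma)+O(1)$ and therefore $\log\log(10+v^{2})=\log(-\log\sigma)+O(1)\sim\log(-\log\sigma)$, so that $g(v)^{-1/2}\sim\log^{-a/2}(-\log\sigma)$. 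Feeding this back into the displayed relation between $\sigma$ and $v$ gives $v^{\frac{p-1}{2}}\approx\sigma^{-1}\log^{-a/2}(-\log\sigma)$, i.e.
\[
v(t)\approx(T-t)^{-\frac{2}{p-1}}\log^{-\frac{a}{p-1}}\bigl(-\log(T-t)\bigr),
\]
which is exactly \eqref{Eqn:Asympv}; a short bootstrap confirms that the $O(1)$ corrections inside the iterated logarithms are absorbed after taking one more logarithm.

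\textbf{Main obstacle.} Steps 1 and 2 are routine. The delicate part is Step 3: one must justify the Karamata-type tail asymptotics for the integral defining $T-t$ and, above all, carry out the self-consistent inversion through the two nested logarithms --- the error term inside $\log(10+v^{2})$ is only $O(1)$, yet it becomes genuinely negligible only after a further logarithm, so the bootstrap has to be done carefully. One should also check that all the convergence and asymptotic statements hold uniformly in the sign of $a$: for $a<0$ the factor $g(v)\to0$, but only like a negative power of $\log\log v$, which affects neither integrability nor the leading-order behaviour.
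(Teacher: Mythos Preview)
Your proof is correct and follows essentially the same route as the paper: local existence via Lipschitz continuity, the first integral $(v')^{2}=2F(v)+\text{const}$, finiteness of $T$ from integrability of $(2F)^{-1/2}$ at infinity, the integral representation $T-t=\int_{v(t)}^{\infty}(2F+C)^{-1/2}\,dw$, and a two-pass inversion (rough power bound first, then feed back to extract the $\log\log$ factor). The only stylistic difference is that you package the tail integral estimate as a Karamata-type statement for regularly varying integrands, whereas the paper writes the same thing more elementarily as $\int_{v}^{\infty}w^{-\frac{p+1}{2}\pm}\,dw$ bounds; both yield $\log\log(10+v^{2})\approx\log(-\log(T-t))$ and hence \eqref{Eqn:Asympv}.
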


\begin{proof}
The local existence and uniqueness follows from the Picard-Lindel\"of theorem (observe that $y \rightarrow |y|^{p-1} y
\log^{a} \left( \log (10 + y^{2}) \right) $ is locally Lipschitz).  Let $T$ be the maximal positive time of existence of the solution.\\
\\
We claim that $ T < \infty $. Indeed multiplying (\ref{Eqn:ProblemOdeV}) by $v^{'}(t)$ and integrating we see that there exists
a constant $C \in \mathbb{R}$ such that

\begin{equation}
\begin{array}{l}
(v^{'}(t))^{2} = 2F(v(t)) + C
\end{array}
\nonumber
\end{equation}
We claim that $v$ is positive on $[0, \infty)$. If not introducing $ t_{0} := \min \left\{ t \in [0, \infty) : v(t) \leq 0 \right\} $ we see that
for $t < t_{0}$, $ v(t) > 0 $; hence, using also (\ref{Eqn:ProblemOdeV})),  $v^{'}(t) > 0$, which contradicts $v(t_{0}) =0$. Hence
using again  (\ref{Eqn:ProblemOdeV})), we see that $ v^{'}(t) > 0 $ for $t \in [0, \infty)$. Hence

\begin{equation}
\begin{array}{l}
v^{'}(t)= \sqrt{2 F(v(t)) + C} \cdot
\end{array}
\label{Eqn:DerivPrime}
\end{equation}
Hence integrating (\ref{Eqn:DerivPrime}) from $0$ to $t$ we get

\begin{equation}
\int_{0}^{t} \frac{v^{'}(t)}{\sqrt{2 F(v(t)) + C}} \; ds = t \cdot
\label{Eqn:EqualContr}
\end{equation}
We see from (\ref{Eqn:ApproxPot}) that $ \int_{0}^{t} \frac{v^{'}(t)}{\sqrt{2 F(v(t)) + C}} \; ds $ is bounded as $t \rightarrow \infty$. This
contradicts (\ref{Eqn:EqualContr}). Hence $T < \infty$. \\
\\
The blow-up criterion (i.e $ |v(t)| + |v^{'}(t)| \rightarrow \infty $ as $ t \rightarrow T^{-}$), the increasing and positive properties of $v$ and $v^{'}$,
and (\ref{Eqn:DerivPrime}) combined with (\ref{Eqn:ApproxPot}) show that $ v(t) \rightarrow \infty $ as $ t \rightarrow T^{-}$. Integrating (\ref{Eqn:DerivPrime}) from $t$ to $T$ we see that

\begin{equation}
\begin{array}{l}
T - t = \int_{t}^{\infty} \frac{v^{'}(s)}{\sqrt{F(v(s)) +C}} \; ds =  \int_{v(t)}^{\infty} \frac{dy}{\sqrt{F(y) + C}} \cdot
\end{array}
\label{Eqn:EstTmint}
\end{equation}
Hence for $t < T $ close enough to $T$ we get

\begin{equation}
\begin{array}{l}
\int_{v(t)}^{\infty} \frac{dy}{y^{\frac{p+1}{2}+}} \; dy  \leq T - t \leq \int_{v(t)}^{\infty} \frac{dy}{y^{\frac{p+1}{2}-}} \; dy \cdot
\end{array}
\nonumber
\end{equation}
Hence

\begin{equation}
\begin{array}{l}
\frac{1}{(T-t)^{\frac{p-1}{2}+}} \lesssim  v(t) \lesssim \frac{1}{(T - t)^{\frac{p-1}{2}-}} \cdot
\end{array}
\nonumber
\end{equation}
Hence

\begin{equation}
\begin{array}{l}
\log \left( \log \left( 10 + v^{2}(t) \right) \right) \approx \log \left( - \log (T-t) \right) \cdot
\end{array}
\nonumber
\end{equation}
Hence, plugging the estimate above in (\ref{Eqn:EstTmint}) we see that (\ref{Eqn:Asympv}) holds.

\end{proof}

\section{Local well-posedness theory for
\eqref{equ}-\eqref{pert2}}

\label{Sec:LwpH1L2}

We recall the local well-posedness theory of solutions of
\eqref{equ}-\eqref{pert2}
with data  $ \left( u(0),\partial_{t} u(0) \right) := (u_{0},u_{1}) \in H^{1}_{loc} \times L^{2}_{loc} (\mathbb{R}^{N})$.  \\

First we recall the localized energy estimates (\ref{Eqn:LocNrjEst})
(see e.g. Shatah and Struwe \cite{SSnyu98}).
Let $u$ be a solution of $\partial_{tt}
u - \triangle u = F $ with data $\left( u(0),\partial_{t} u(0)
\right):= (u_{0},u_{1})$. Let $t > 0$. Then we get (formally) the
localized energy estimates:

\begin{equation}
\begin{array}{l}
\left\| \left( u(t), \partial_{t} u(t) \right) \right\|_{H^{1} \times L^{2} (|x| < R)} \lesssim  \| (u_{0},u_{1}) \|_{H^{1} (|x| < R + t) \times L^{2} (|x| < R + t|)} +
\| F \|_{L_{t}^{1} L_{x}^{2} \left( (t ^{'}, x): \, t^{'} \in [0,t], |x| < R + t^{'} \right)}
\end{array}
\label{Eqn:LocNrjEst}
\end{equation}
Let $t_{0} > 0$ (resp. $C^{'} > 0 $) be a constant small enough (resp. large enough) such that all the statements below are true.
Let $x_{0} \in \mathbb{R}^{N}$ and $B(x_{0},R)$ be the closed ball with center $x_{0}$ and radius $R$. Let $ K := \left\{ (t,x):  0  \leq  t  <  t_{0}, \, |x - x_{0}| <  t_{0} - t  \right\}$ and
$ \mathcal{X} := \left\{ v: \, v \in \mathcal{C}_{t} H^{1} (K) \cap \mathcal{C}^{1}_{t} L_{x}^{2} (K) \; \text{and} \; \| v \|_{X} \leq  C^{'} \| (u_{0},u_{1}) \|_{H^{1} ( B(x_{0},t_{0}) )  \times L^{2} ( B(x_{0},t_{0})) }   \right) $ be the Banach space endowed with the norm \\
$ \| v \|_{\mathcal{X}} := \sup \limits_{t \in [0,t_{0})} \sup \limits_{ t_{0} - t > R > 0} \max \left( \| v(t) \|_{H^{1}(B(x_{0},R))}, \| \partial_{t} v (t)\|_{L^{2}(B(x_{0},R))} \right) $.  \\

\begin{rem}
We say that $ v \in \mathcal{C}_{t} L_{x}^{2} (K) $ if and only if for all $ \bar{t} \in [ 0, t_{0} ) $ and for all open ball $ B(x_{0},R) \subsetneq
B \left( x_{0}, t_{0} - \bar{t} \right)$  we have $ \lim \limits_{t \rightarrow \bar{t}} \left\| v(t) -  v(\bar{t}) \right\|_{L^{2}( B(x_{0},R))} = 0 $. A similar definition holds for $ u \in \mathcal{C}_{t} H^{1}(K) $.
\end{rem}
Let $\Psi$ be defined by the formula below:
\begin{equation}
\begin{array}{l}
u \in \mathcal{X}   \rightarrow \Psi(u), \; \text{with} \; \Psi(u)(t) :=  \partial_{t} \mathcal{R}(t) * u_{0} + \mathcal{R}(t) * u_{1} + \int_{0}^{t} \mathcal{R}(t-s) * f(u(s)) \; ds
\end{array}
\label{Eqn:DefPsi}
\end{equation}
Here $\mathcal{R}$ denotes the fundamental solution, that is
$\mathcal{R}$  is the solution of
\eqref{equ} with zero right-hand side and
initial data
$ \left( \mathcal{R}(0),\partial_{t} \mathcal{R}(0) \right) := (0,\delta)$ ( Here $ \delta $ is the standard delta function, i.e $\delta(\phi) := \phi(0)$ for every test function
$\phi$ ).  It is well-known (see e.g \cite{FollPDEsBook}) that if $N=1$ then  $ \langle \mathcal{R}(t), \phi \rangle := \frac{1}{2} \int_{-t}^{t} \phi \; dx $ and for $N > 1$

\begin{equation}
\begin{array}{l}
N = 2k, \; k \in \{ 1,.2,... \}: \;  \mathcal{R}(t)  := \frac{1} {1 \times 3 .... \times (N-1)}  \left[ t^{-1} \partial_{t} \right]^{\frac{N-2}{2}} t^{N-2} T_{t}, \; \text{and} \\
N = 2 k + 1, \;  k \in \{ 1, 2,...\}: \;  \mathcal{R}(t) := \frac{1}{1 \times 3... \times (N-2)} \left[ t^{-1} \partial_{t} \right]^{\frac{N-3}{2}} \left[ t^{N-2} \Sigma_{t} \right]
\end{array}
\label{Eqn:FormulaR}
\end{equation}
Here  $ \langle T_{t}, \phi \rangle :=  \frac{1}{\omega_{N+1}} \int_{|y| < 1} \frac{\phi(ty)}{\sqrt{1 - |y|^{2}}} \; dy $ and
$ \langle \Sigma_{t}, \phi \rangle :=  \frac{1}{\omega_{N}} \int_{|y|=1} \phi(ty)  d \sigma(y) $.   In particular  if $N=2$ then
$ \langle R(t), \phi \rangle := \frac{t}{2 \pi} \int_{|y| < 1} \frac{\phi(ty)}{\sqrt{1- |y|^{2}}} \; dy $ and if $N=3$
then  $ \langle R(t), \phi \rangle := \frac{1}{4 \pi}  \int_{S^{2}}  \phi(ty) \; d \sigma(y) $.  \\
Let $(w,w_{1},w_{2}) \in \mathcal{X}^{3}$. \\
From (\ref{Eqn:LocNrjEst}), the embedding $L^{2p} ( B(0,t_{0})) \hookrightarrow H^{1} ( B(0,t_{0})) $, and the estimate $ g(x) \lesssim 1 + |x|^{0+} $ show that

\begin{equation}
\begin{array}{ll}
\| \Psi(w) \|_{L_{t}^{\infty} H^{1}(K) \times L_{t}^{\infty} L_{x}^{2}(K)} & \lesssim \left\| (u_{0},u_{1}) \right\|_{H^{1} ( B(0,t_{0}) ) \times L^{2}  ( B(0,t_{0}))} + \| f(w) \|_{L_{t}^{1} L_{x}^{2}(K)} \\
& \lesssim \left\| (u_{0},u_{1}) \right\|_{H^{1} ( B(0,t_{0}) ) \times L^{2}  ( B(0,t_{0}))} +
t_{0} \left( \| w \|^{p}_{L_{t}^{\infty} L_{x}^{2p} (K)} +
\| w \|^{p+}_{L_{t}^{\infty} L_{x}^{2p+} (K)} \right)  \\
& \leq C^{'} \left\| ( u_{0},u_{1} ) \right\|_{H^{1} (B(0,t_{0})) \times L^{2} (B(0,t_{0}))} \cdot
\end{array}
\label{Eqn:ExpNonlin}
\end{equation}
We then claim that $ \Psi(w) \in  \mathcal{C}_{t} H^{1} (K) \cap
\mathcal{C}^{1}_{t} L_{x}^{2} (K) $. Translating in space and in time
if necessary we may assume
without loss of generality
that $x_{0}=0$ and $\bar{t}=0 $. \\
We first prove that $w_{l}$, solution of the linear wave equation with
data $ \left( u_{l}(0),\partial_{t} u_{l}(0) \right) :=
(u_{0},u_{1})$, lies in $ \mathcal{C}_{t} H^{1} (K) \cap
\mathcal{C}^{1}_{t} L_{x}^{2} (K)$. If $u_{0}$ and $u_{1}$ are smooth
functions (say, in $\mathcal{C}^{\infty}(B(O,R))$), then it is clear from (\ref{Eqn:FormulaR}) and elementary considerations (such as the Minkowski inequality for integrals and the dominated convergence theorem) that $\mathcal{P}(w_{l})$ holds with $ \mathcal{P}(v): \; \lim \limits_{t \rightarrow \bar{t}} \left\| v(t) -  v(\bar{t}) \right\|_{L^{2}( B(O,R))} = 0 $. In the general case $(u_{0},u_{1}) \in H^{1}_{loc} \times L^{2}_{loc}(\mathbb{R}^{N})$,  $\mathcal{P}(u_{l})$ also holds since we can find smooth functions $\left( u_{0,\epsilon}, u_{1,\epsilon} \right)$
within a $ H^{1} (B(O, R+ t_{0})) \times  L^{2}( B(O, R +t_{0}) ) - $ distance of $\epsilon > 0 $ from $(u_{0},u_{1})$
(with $\epsilon$ that can be chosen arbitrary small), then use (\ref{Eqn:LocNrjEst}) to the solution of the linear wave equation with data
$\left( u_{0}- u_{0,\epsilon},u_{1} - u_{1,\epsilon} \right)$, and then use $\mathcal{P} \left( u_{l,\epsilon} \right)$ with $u_{l,\epsilon}$ solution of the linear wave equation with data $ \left( u_{0,\epsilon}, u_{\epsilon} \right)$. \\
Next we consider  $u_{nl}$, solution of  $ \partial_{tt} w - \triangle w  = f(u) $ with data $ \left( u_{nl}(0),\partial_{t} u_{nl}(0) \right) := (0,0)$. From (\ref{Eqn:LocNrjEst}), similar steps as (\ref{Eqn:ExpNonlin}), and the dominated convergence theorem,  we see that $ \mathcal{P} \left( u_{nl} \right)$ holds. \\
The fundamental theorem of calculus and the Holder inequality  show that

\begin{equation}
\begin{array}{ll}
\| \Psi(w_{2}) - \Psi(w_{1}) \|_{X} & \lesssim   \left\| f(w_{2}) - f(w_{1}) \right\|_{L_{t}^{1} L_{x}^{2}(K)} \\
& \lesssim t_{0} \| w_{2} - w_{2} \|_{L_{t}^{\infty} L_{x}^{2p}(K)} \sum \limits_{ \bar{w} \in \{ w_{1}, w_{2} \} }
\left( \| \bar{w} \|^{p-1}_{L_{t}^{\infty} L_{x}^{2p}(K)} + \| \bar{w} \|^{(p-1)+}_{L_{t}^{\infty} L_{x}^{2p+} (K)}  \right) \cdot
\end{array}
\nonumber
\end{equation}
Hence $\Psi$ is contraction and we can apply the fixed point argument to conclude that there exist a unique $u \in \mathcal{X}$ such that $u = \Psi(u)$.

\section{Influence domain, maximal influence domain}
\label{Sec:InflDomain}

In this section we recall the notion of influence domain and maximal influence domain. \\
Recall (see \cite{Apndeta95})
\footnote{There are slight differences between the definition we give
  and the one in
\cite{Apndeta95}.
  We refer to Section $2$ in Chapter $5$ of this book.}
that a set $\Omega$ is an influence domain if it is an open set of $\{ t \geq 0 \}$ and if $ (\bar{x},\bar{t}) \in \Omega $ implies that
$K(\bar{x},\bar{t}) := \left\{ (t,x) :  0 \leq  t <  \bar{t} - |x- \bar{x}| \right\} \subset \Omega $. We define
$ \q D $, the maximal influence domain, to be the union of the influence domains $ \Omega $ such that there exists a solution $ u \in  \mathcal{C}_{t} H^{1} (\Omega) \cap \mathcal{C}^{1}_{t} L_{x}^{2} (\Omega) $ with data $ \left( u(0), \partial_{t} u(0) \right) := (u_{0},u_{1}) \in H^{1}_{loc} \times L^{2}_{loc} (\mathbb{R}^{N}) $ that satisfies $ u = \Psi(u) $.

\begin{rem}
We say that $ v \in \mathcal{C}_{t} L_{x}^{2} (\Omega) $ if and only for all $\bar{t}$ such that $ \{ t = \bar{t} \}  \cap  \Omega  \neq \emptyset $ and for all compact set
$ \mathcal{K} $ such that $ \bar{t} \in \mathcal{K} \subset \{ t = \bar{t} \}  \cap  \Omega $ we have $ \lim \limits_{t \rightarrow \bar{t}} \left\| v(t) - v(\bar{t}) \right\|_{L^{2}(\mathcal{K})} = 0 $. A similar definition holds for $ v \in \mathcal{C}_{t} H^{1}(\mathcal{K}) $.
\end{rem}
We make the elementary observation that if a cone is an influence domain, then a subcone of this cone is also an
influence domain. Hence, using also Appendix \ref{Sec:LwpH1L2} and the definition of
 $ \q D $, we see that there exists $T: = T(x) > 0 $  such that we can write
 $ \q D =  := \left\{ (t,x):  0 \leq t < T(x) \right\}$. Moreover there are two options. Either there exists $x_{0} \in \mathbb{R}^{N}$ such that $T(x_{0}) =  \infty$  and, in this case, $\q D = \left\{ t \geq 0 \right\}$; or the graph $ x \rightarrow T(x)$ is $1-$Lipschitz.

\def\cprime{$'$} \def\cprime{$'$}
\providecommand{\bysame}{\leavevmode\hbox to3em{\hrulefill}\thinspace}
\providecommand{\MR}{\relax\ifhmode\unskip\space\fi MR }
\providecommand{\MRhref}[2]{%
  \href{http://www.ams.org/mathscinet-getitem?mr=#1}{#2}
}
\providecommand{\href}[2]{#2}

\end{document}